\newtheorem{thm}{Theorem}
\newtheorem*{thm*}{Theorem}
\newtheorem{lem}[thm]{Lemma}
\newtheorem{prop}[thm]{Proposition}
\newtheorem{cor}[thm]{Corollary}
\newtheorem{con}[thm]{Conjecture}
\theoremstyle{definition}
\newtheorem{defn}[thm]{Definition}
\theoremstyle{remark}
\numberwithin{equation}{section}
\numberwithin{thm}{section}
\newcommand{\defeq}{\coloneqq}
\newcommand{\eqdef}{\eqqcolon}
\def\XXint#1#2#3{{\setbox0=\hbox{$#1{#2#3}{\int}$}
		\vcenter{\hbox{$#2#3$}}\kern-.5\wd0}}
\newcommand{\RR}{\mathbb{R}}
\newcommand{\NN}{\mathbb{N}}
\newcommand\supp{{\rm spt}\,}
\newcommand\res{\mathop{\hbox{\vrule height 7pt width .3pt depth 0pt
			\vrule height .3pt width 5pt depth 0pt}}\nolimits}
\newcommand{\mres}{\res}
\newcommand{\bG}{\mathbf{G}}
\newcommand{\p}{{\mathbf{p}}}
\newcommand{\bE}{{\mathbf{E}}}
\newcommand{\bB}{{\mathbf{B}}}
\newcommand{\bC}{{\mathbf{C}}}
\newcommand{\be}{{\mathbf{e}}}
\newcommand\N{{\mathbb N}}
\newcommand\R{{\mathbb R}}
\newcommand{\eps}{{\varepsilon}}
\renewcommand{\epsilon}{{\varepsilon}}
\newcommand{\bmax}{\mathbf{m}}
\newcommand{\diam}{{\rm {diam}}}
\newcommand{\dist}{{\rm {dist}}}
\newcommand{\cG}{{\mathcal{G}}}
\newcommand{\Iqs}{{\mathcal{A}}_Q(\R^{n})}
\newcommand{\Iq}{{\mathcal{A}}_Q}
\def\a#1{\llbracket{#1}\rrbracket}
\newcommand{\D}{\textup{Dir}}
\newcommand{\etaa}{{\bm{\eta}}}
\newcommand\B{\bB}
\def\@tocline#1#2#3#4#5#6#7{\relax
	\ifnum #1>\c@tocdepth % then omit
	\else
	\par \addpenalty\@secpenalty\addvspace{#2}%
	\begingroup \hyphenpenalty\@M
	\@ifempty{#4}{%
		\@tempdima\csname r@tocindent\number#1\endcsname\relax
	}{%
		\@tempdima#4\relax
	}%
	\parindent\z@ \leftskip#3\relax \advance\leftskip\@tempdima\relax
	\rightskip\@pnumwidth plus4em \parfillskip-\@pnumwidth
	#5\leavevmode\hskip-\@tempdima
	\ifcase #1
	\or\or \hskip 1em \or \hskip 2em \else \hskip 3em \fi%
	#6\nobreak\relax
	\dotfill\hbox to\@pnumwidth{\@tocpagenum{#7}}\par
	\nobreak
	\endgroup
	\fi}
\title[Stationary varifolds]{Remarks and Conjectures on Stationary Varifolds}
\author[C. Brena]{Camillo Brena}
\address{School of Mathematics, Institute for Advanced Study, 1 Einstein Dr., Princeton NJ 08540, USA}
\email{cbrena@ias.edu}
\author[S. Decio]{Stefano Decio}
\address{Department of Mathematics, ETH Z\"urich, R\"amistrasse 101, 8092 Z\"urich, Switzerland}
\email{stefano.decio@math.ethz.ch}
\author[C. De Lellis]{Camillo De Lellis}
\address{School of Mathematics, Institute for Advanced Study, 1 Einstein Dr., Princeton NJ 08540, USA}
\email{camillo.delellis@ias.edu}
\begin{document}
	\begin{abstract}		
In this paper, we revisit some known results about stationary varifolds using simpler arguments. In particular, we obtain the height bound and the Lipschitz approximation along with its estimates, and as a consequence, the excess decay
		\end{abstract}
        \maketitle
	\tableofcontents
 		  \section{Introduction}

The regularity theory of stationary varifolds was pioneered by Bill Allard in his two classical papers \cite{AllardFirst} and \cite{AllardBoundary}, which are concerned, respectively, with the interior and boundary regularity. In this note we will focus our interest on integer rectifiable varifolds of dimension $m$ in a open subset $U$ of the Euclidean space $\mathbb R^{m+n}$. These objects can be identified with Radon measures $\|V\|=\Theta \mathcal{H}^m \res E$ on $U$, where $E$ is an $m$-dimensional rectifiable set, $\mathcal{H}^m$ denotes the Hausdorff $m$-dimensional measure, and $\Theta$ is a Borel function taking positive integer values ($\mathcal{H}^m$-a.e.). This means that 
\begin{equation}
    \int \varphi\,\dd\|V\| = \int_E \Theta (x) \varphi (x)\,\dd\mathcal{H}^m (x) \qquad \text{for every } \varphi \in C_c (U)\, .
\end{equation}
Hence we will always understand the varifold as a pair $(E, \Theta)$, up to $\mathcal{H}^m$-null sets, or as a measure $\|V\|$ of the form above. We will assume that $V$ is stationary in $U$: this means that, for every given $X\in C^\infty_c (U, \mathbb R^{m+n})$, if we let $\Phi_t$ be the one-parameter family of diffeomorphisms of $U$ generated by $X$, then
\begin{equation}
\delta V (X) \defeq \left.\frac{d}{dt}\right|_{t=0} \|(\Phi_t)_\sharp V\| (U) = 0\, .
\end{equation}
Here $\psi_\sharp V$ denotes the varifold $(\psi (E), \Theta\circ \psi^{-1})$ when $\psi$ is a $C^1$ diffeomorphism.
It follows from Allard's monotonicity formula that, since $V$ is integral and stationary, we can assume that, without loss of generality, $E$ is $\supp(\|V\|)\cap U$ and $\Theta$ is given pointwise by the upper semicontinuous function
\begin{equation}\label{e:density}
\Theta (V, x) = \lim_{r\searrow 0} \frac{\|V\| (B_r (x))}{\omega_m r^m}\, .
\end{equation}
This gives a ``canonical pair'' and rids us of any tedious discussion of $\mathcal{H}^m$-null sets.
Finally, we will be interested in the interior regularity theory of such objects. More precisely, using the notation $\supp(V)$ for $\supp (\|V\|)$ (a convention which will be adopted in the rest of this note), a point $x\in \supp(V)\cap U$ will be called regular if there is a neighborhood $U'$ of $x$ with the property that $\supp(V)\cap U'$ is a $C^1$ submanifold of $U'$ (without boundary in $U'$). In this case Allard's constancy theorem implies that $\Theta (V, \,\cdot\,)$ is locally constant on the latter submanifold. The interior singular set is the complement of the regular points in $\supp(V)\cap U$.

\medskip

Under the above assumptions, a corollary of Allard's classical interior regularity theory developed in \cite{AllardFirst} is that the regular points of $V$ form a (relatively open) dense set in $\supp(V) \cap U$. While it is well known that stationary varifolds can form (even very complicated) singularities, there are no examples in which the singular set has dimension bigger than $m-1$ and strong indications that in fact singularities cannot be larger, see for instance the recent result \cite{HS}. We recall therefore the following well-known conjecture.
\begin{con}\label{c:codim-uno}
The interior singular set of a stationary integer rectifiable $m$-dimensional varifold has codimension not smaller  than $1$.
\end{con}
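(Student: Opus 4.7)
Conjecture \ref{c:codim-uno} is a longstanding open problem, so what follows is only the standard Federer-style blueprint, indicating where it succeeds and where it stalls. Argue by contradiction and suppose that $\cH^m(\sing V) > 0$. Choose a point $x_0$ of positive upper $m$-density of $\sing V$. Combining Allard's monotonicity formula with the compactness theorem for integer rectifiable stationary varifolds, a subsequence of blow-ups of $V$ at $x_0$ converges to a stationary integer cone $C$ which inherits the density concentration: $\sing C$ has positive upper $m$-density at the origin. In particular, $\sing C$ contains singular points arbitrarily close to, but distinct from, the origin.

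\textbf{Spine lifting.} Iterate the blow-up at singular points of $C$ different from the origin; after each step, translation-invariance along the approach direction is gained, which is Federer's classical dimension-reduction mechanism. After finitely many iterations one reaches a stationary integer cone $C_*$ translation-invariant along an $m$-dimensional subspace $W$. Such a $C_*$ is forced to be a finite sum of $m$-planes through the origin, each containing $W$, taken with positive integer multiplicities. Were $C_*$ a single plane with multiplicity one, Allard's interior $\varepsilon$-regularity theorem would apply and force regularity at the approximating scale, contradicting the presence of singular points in every ball around $x_0$.

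\textbf{The core difficulty.} The scheme collapses because $C_*$ need not reduce to a multiplicity-one plane: sums of planes, or a single plane with integer multiplicity larger than one, are perfectly stationary, and without a minimization principle there is no tool to rule them out. Indeed, stationary integer cones with genuine singularities — Taylor's $Y$- and $T$-junctions being the most familiar — exhibit precisely the kind of configuration that this reduction cannot exclude. The missing ingredient would therefore be either a classification of stationary integer cones with $(m-1)$-dimensional spine, or an $\varepsilon$-regularity theorem for stationary varifolds whose tangent is a union of planes (possibly with higher multiplicity). The first option amounts to a rigidity statement that, even in low dimensions, is only partially understood; the second has so far resisted attack in the stationary category precisely because the standard harmonic-approximation scheme encoded in the Lipschitz approximation produces several sheets whose interaction is not controlled by any variational inequality. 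Resolving Conjecture \ref{c:codim-uno} seems therefore to require a genuinely new idea, perhaps along the PDE-flavored lines developed in \cite{HS}, rather than a refinement of the classical reduction sketched above.
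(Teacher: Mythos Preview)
The statement is labeled a \emph{conjecture} in the paper, and the paper offers no proof: it is introduced explicitly as ``the following well-known conjecture'' and is used only to motivate the weaker Conjecture~\ref{c:measure-zero} and the subsequent partial results. Your write-up correctly recognizes this and does not pretend to give a proof; you outline the Federer dimension-reduction scheme and pinpoint where it breaks down (the inability to rule out higher-multiplicity planes or unions of planes as tangent cones in the absence of a minimizing hypothesis). That diagnosis is accurate and consistent with the paper's own framing of the difficulty.

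One small caution: your sentence ``after finitely many iterations one reaches a stationary integer cone $C_*$ translation-invariant along an $m$-dimensional subspace $W$'' is slightly too strong as stated. Dimension reduction guarantees that after at most $m$ steps you land on a cone with spine of dimension at least $m-1$ (if the singular set had positive $\mathcal{H}^m$-measure), but reaching an $m$-dimensional spine would already force $C_*$ to be a single plane with integer multiplicity, and the obstruction you describe arises one step earlier, at the $(m-1)$-dimensional spine. This does not affect your overall point, which is that the scheme stalls for lack of an $\varepsilon$-regularity theorem near non-flat or higher-multiplicity tangent cones.
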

On the other hand, though 52 years have passed since \cite{AllardFirst}, even the following more modest very plausible conjecture is still open.

\begin{con}\label{c:measure-zero}
The interior singular set of a stationary integer rectifiable $m$-dimensional varifold is $\mathcal{H}^m$-null.
\end{con}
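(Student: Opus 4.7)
The plan is to prove the conjecture by a density-stratification argument. For each positive integer $Q$, set $E_Q := \{x \in \supp(V)\cap U : \Theta(V,x) = Q\}$; the sets $E_Q$ are Borel (being differences of closed sets, by the upper semicontinuity of $\Theta(V,\cdot)$ from Allard's monotonicity formula), and together they partition $\supp(V)\cap U$. By rectifiability of $\|V\|$, for $\mathcal{H}^m$-a.e. $x \in E_Q$ the measure $\|V\|$ admits an approximate tangent $m$-plane $\pi_x$, the density $\Theta(V,\cdot)$ is approximately continuous at $x$ with value $Q$, and consequently the unique varifold tangent at such an $x$ is $Q\,\a{\pi_x}$.

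For the stratum $Q=1$, at $\mathcal{H}^m$-a.e.\ $x \in E_1$ the flat excess and one-sided density hypotheses of Allard's $\varepsilon$-regularity theorem are met at all sufficiently small scales---equivalently, the excess decay established in this paper can be iterated to yield a $C^{1,\alpha}$ graph representing $\supp(V)$ in a neighborhood of $x$---so $x$ is regular. Hence $\sing V \cap E_1$ is $\mathcal{H}^m$-null, and the conjecture reduces to the analogous assertion for $\bigcup_{Q\geq 2} E_Q$.

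This reduction is where I expect the real difficulty to lie. What one would like is a higher-multiplicity analog of Allard's theorem for stationary varifolds: that closeness in varifold distance to a $Q$-fold plane, combined with density near $Q$, forces local smoothness of $\supp(V)$. Such $\varepsilon$-regularity results are available for area-minimizing currents (Almgren's big regularity paper, in the revised treatment of De Lellis--Spadaro) and for stable codimension-one stationary varifolds (Wickramasekera), but in both cases the minimizing or stability assumption is essential: it is what endows the natural blow-up limits---Dir-minimizing $Q$-valued functions, respectively stable branched minimal immersions---with H\"older regularity. In the purely stationary setting, the blow-up objects are merely weakly stationary multi-valued graphs, for which no continuity theory is presently known. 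A genuine attack would therefore seemingly require either constructing a center manifold without a minimizing hypothesis, or else establishing directly that $\bigcup_{Q\geq 2} E_Q$ has vanishing $\mathcal{H}^m$ measure; both appear out of reach with current techniques, which is why the conjecture has remained open for over fifty years.
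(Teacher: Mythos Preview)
The statement you are addressing is labeled a \emph{Conjecture} in the paper, and the paper explicitly does not prove it: the authors write that ``even the following more modest very plausible conjecture is still open.'' There is therefore no proof in the paper to compare your proposal against.

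Your proposal is not a proof either, and you correctly recognize this in your final paragraph. Your stratification into the sets $E_Q$ and the observation that Allard's $\varepsilon$-regularity handles $\mathcal{H}^m$-a.e.\ points of $E_1$ are fine, but the reduction to $\bigcup_{Q\geq 2} E_Q$ is not a reduction in any useful sense: the set $\bigcup_{Q\geq 2} E_Q$ can certainly have positive $\mathcal{H}^m$ measure (think of a plane with multiplicity $2$), so the conjecture does not reduce to showing that set is null. What you actually need is that $\mathcal{H}^m$-a.e.\ point of each $E_Q$ is regular, and for $Q\geq 2$ this is exactly the open problem. Your diagnosis of the obstruction---the absence of a higher-multiplicity $\varepsilon$-regularity theorem for stationary varifolds without a minimizing or stability hypothesis---matches the paper's own framing: the authors reformulate the difficulty as Conjecture~\ref{c:flat=regular} and then propose the further Conjectures~\ref{c:flat-high-order} and~\ref{c:unique-continuation} as a possible route. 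So your assessment of why the problem is hard is accurate and aligned with the paper, but there is no proof here, nor does the paper claim one.
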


The above statement is correct (and in fact the singular set turns out to be much smaller) under some stronger variational assumption, though it is still open even for stable general varifolds in codimension one; but rather than surveying the many results in the literature we refer the reader to \cite{DeLellis-ICM}. On the other hand the statement is false for slight generalizations of the conjecture: for instance if the varifold is assumed to have bounded mean curvature. In this case the theory of Allard still applies, and hence the singular set has to be meager in the topological sense, but at the same time both Allard in \cite{AllardFirst} and Brakke a few years later in \cite{Brakke} show examples for which the singular set has positive Hausdorff measure.

Given how hard the above problem has proved to be, we propose in this paper a number of more modest goals, each of which retains at least a portion of the most challenging aspects of proving Conjecture \ref{c:measure-zero}. These statements, if taken all together, provide a path to Conjecture \ref{c:measure-zero}, but we are not claiming that ours is a ``new program'': in fact our statements emerge quite naturally from the various results available in the literature and any expert in the subject could easily come up with them. We however think that it is worth to point them out and also that there are appropriate tools to prove at least some of them. In particular another purpose of this note is to prepare some technical ground for the forthcoming work \cite{BDF}. In passing we will give some alternative shorter (and in our opinion more transparent) proofs of known results and will discuss some ``folklore knowledge'' about their optimality.

\subsection{Conjectures} From now on we assume that the reader has some familiarity with the regularity theory for minimal surfaces, our notation and terminology is very similar to that of \cite{Simon} and \cite{DelellisAllard}. First of all, we recall the following two consequences of \cite{AllardFirst}:
\begin{itemize}
\item[(a)] $\Theta(V ,x) \geq 1$ at every $x\in \supp(V)$, and there is $\varepsilon>0$ such that, if $\Theta(V,x)< 1+\varepsilon_0$, then $x$ is a regular point;
\item[(b)] If $\Theta(V,x) < Q (1+\varepsilon_0)$ and there is a neighborhood $U'$ of $x$ in which $\Theta(V,\,\cdot\,) \geq Q$ $\mathcal{H}^m$-a.e., then $x$ is a regular point.
\end{itemize}
On the other hand, from basic measure theoretic facts (and elementary properties of rectifiable sets), at $\mathcal{H}^m$-a.e.\ $x_0$ there is a unique tangent cone to $V$ which is ``flat'', namely supported in the $m$-dimensional plane $T_{x_0} V$ which is the approximate tangent to the rectifiable set $\supp(V)$ at $x_0$. This means that, if we denote by $\tau_{x,r}$ the dilation maps 
\begin{equation}
    \tau_{x,r} (y) = \frac{y-x}{r}\, , 
\end{equation}
then $\|(\tau_{x_0,r})_\sharp V\|$ converges (locally in the weak$^\star$ topology) to the Radon measure $\Theta(V,x_0) \mathcal{H}^m \res T_{x_0} V$. It follows from Allard's compactness for stationary integer rectifiable varifolds that $\Theta (V,x_0)$ is an integer.

Conjecture \ref{c:measure-zero} would then be implied by the following.

\begin{con}\label{c:flat=regular}
If $V$ has a unique flat tangent cone at $x_0$ and 
\begin{equation}
\lim_{r\searrow 0} \frac{\|V\| (\{\Theta(V,\,\cdot\,)\neq \Theta (V, x_0)\}\cap \mathbf{B}_r (x_0))}{r^m} = 0\, ,
\end{equation}
then $x_0$ is a regular point.
\end{con}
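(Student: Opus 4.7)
The plan is to establish a decay lemma for ``excess plus density deficit'' at $x_0$, from which $C^{1,\alpha}$-regularity of $\supp(V)$ near $x_0$ follows by a standard iteration. After translation and rotation I assume $x_0=0$, $T_{x_0}V=\pi_0:=\mathbb{R}^m\times\{0\}$, and $\Theta(V,0)=Q\in\mathbb{N}$. Set
\[
E(r):=\frac{1}{r^m}\int_{\mathbf{B}_r}|\pi_{T_xV}-\pi_0|^2\,\dd\|V\|(x),\qquad D(r):=\frac{\|V\|(\{\Theta(V,\cdot)\neq Q\}\cap \mathbf{B}_r)}{r^m}.
\]
Uniqueness of the flat tangent cone gives $E(r)\to 0$, and the hypothesis is exactly $D(r)\to 0$, as $r\searrow 0$.

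The core claim is an excess-decay lemma: there exist $\theta\in(0,1/2)$ and $r_0>0$, depending only on $m,n,Q$, such that for every $r\le r_0$ there is a plane $\pi_r$ with
\[
E(V,\mathbf{B}_{\theta r},\pi_r)+D(\theta r)\le \tfrac12\bigl(E(r)+D(r)\bigr).
\]
I would prove this by contradiction and blow-up, using the Lipschitz approximation from the paper. Assuming failure along a sequence $r_k\to 0$, rescale $V_k:=(\tau_{0,r_k})_\sharp V$. Since $E(r_k),D(r_k)\to 0$, the Lipschitz approximation yields $Q$-valued maps $u_k\colon B^m_{1/2}\to\mathcal{A}_Q(\mathbb{R}^n)$, defined outside a small exceptional set $K_k$, whose graphs coincide with $\supp(V_k)$ outside $K_k$ and whose Lipschitz constants vanish as $k\to\infty$. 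After centering and normalizing $\bar u_k:=(u_k-\eta\circ u_k)/\sqrt{E(r_k)}$, the standard harmonic-approximation scheme (as in the $Q$-valued theory of De Lellis--Spadaro, adapted here to varifolds in \cite{BDF}) gives, along a subsequence, $\bar u_k\to u_\infty$ strongly in $L^2(B^m_{1/2})$, with $u_\infty$ a $Q$-valued Dir-minimizer.

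The crucial use of the density hypothesis is to identify $u_\infty$ as a \emph{single} harmonic function with multiplicity $Q$. For $T=\sum_i\llbracket v_i\rrbracket\in\mathcal{A}_Q(\mathbb{R}^n)$ set $\sigma(T):=\max_{i,j}|v_i-v_j|$, so that $\sigma(T)=0$ iff $T=Q\llbracket v\rrbracket$ for some $v\in\mathbb{R}^n$. Via the Lipschitz approximation, the set $\{y\in B^m_{1/2}:\sigma(u_k(y))>0\}$ is, up to $K_k$, the projection of $\{\Theta\neq Q\}\cap\supp(V_k)$ onto $\pi_0$, hence has measure at most $C\bigl(D(r_k)+|K_k|\bigr)\to 0$. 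Since $\sigma$ is scale-invariant, the same holds for $\bar u_k$. Because $\bar u_k\to u_\infty$ uniformly and $\sigma$ is continuous on $\mathcal{A}_Q(\mathbb{R}^n)$, the open set $\{\sigma(u_\infty)>0\}$ lies in $\liminf_k\{\sigma(\bar u_k)>0\}$; by Fatou it has zero Lebesgue measure, and being open it is empty. Thus $u_\infty=Q\llbracket v_\infty\rrbracket$ with $v_\infty$ harmonic, and the classical single-valued decay $\int_{B^m_\theta}|Dv_\infty-(Dv_\infty)_{B^m_\theta}|^2\le C\theta^2\int_{B^m_{1/2}}|Dv_\infty|^2$ contradicts the standing assumption that the decay fails at scale $r_k$.

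Once the decay lemma is proved, iterating it produces a sequence of optimal planes $\pi_{\theta^k r_0}$ with geometric tilt bound, and the Lipschitz approximations at the dyadic scales assemble into a $C^{1,\alpha}$ graph representation of $\supp(V)$ in a neighborhood of $x_0$; by statement (b) of the introduction (or directly), $x_0$ is regular. The main obstacle I anticipate is precisely the identification step: one has to check quantitatively that the ``splitting part'' $\{\sigma(u_k)>0\}$ of the Lipschitz approximation is controlled by $D(r_k)$ modulo the approximation error of order $E(r_k)$, so that the measure-theoretic smallness of the density-deviation set transfers to the \emph{pointwise} conclusion $u_\infty=Q\llbracket v_\infty\rrbracket$ in the limit. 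A secondary technical point is that, throughout the iteration, one must propagate the density deficit $D$ together with the excess relative to the tilted planes $\pi_r$, ensuring that the hypotheses of the Lipschitz approximation remain in force after each change of base plane.
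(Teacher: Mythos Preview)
The statement you are attempting to prove is presented in the paper as an \emph{open conjecture}, and the paper does not claim a proof of it. Your excess-decay step is essentially correct and is in fact carried out in the paper as Theorem~\ref{decay} (via the harmonic-approximation Lemma~\ref{harmapprox}): under the hypotheses of the conjecture one obtains $\bE(V,\bB_r(x_0))\le C r^{2-2\delta}$ for all small $r$. The fatal gap is your final step, where you assert that ``the Lipschitz approximations at the dyadic scales assemble into a $C^{1,\alpha}$ graph representation of $\supp(V)$ in a neighborhood of $x_0$''. At each scale $r$ the Lipschitz approximation coincides with the varifold only over the good set $K_\lambda$, and the complement $B_r\setminus K_\lambda$---although of small measure---need not be empty; it is precisely there that the varifold may carry regions with $\Theta\le Q-1$ and fail to be graphical. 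Your appeal to ``statement (b) of the introduction'' is circular: (b) requires $\Theta\ge Q$ $\mathcal{H}^m$-a.e.\ in a neighborhood of $x_0$, which is exactly the hypothesis the conjecture seeks to remove. Moreover, excess decay at the single point $x_0$ does not propagate: for a nearby point $y$ and scales $r\ll |y-x_0|$ one has no control on $\|V\|(\{\Theta\ne Q\}\cap\bB_r(y))/r^m$, so the decay argument cannot be rerun at $y$, and the usual Allard mechanism for upgrading pointwise decay to a $C^{1,\alpha}$ graph is unavailable. The paper states this obstruction explicitly: ``the great challenge \ldots\ lies therefore in the possibility that $x_0$ is the accumulation of small regions of $\supp(V)$ with positive Hausdorff measure where the density of the varifold is at most $Q-1$.''

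A secondary inaccuracy: you claim that the blow-up limit $u_\infty$ is a $Q$-valued Dir-minimizer. For stationary varifolds (as opposed to area-minimizing currents) this is not known; stationarity gives only that the average $\etaa\circ u_\infty$ is harmonic. This does not wreck the excess-decay portion of your argument, since you then argue $u_\infty=Q\a{v_\infty}$ and harmonicity of $v_\infty=\etaa\circ u_\infty$ is the content of Lemma~\ref{harmapprox}; but Dir-minimality should not be invoked.
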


A key point of Allard's theory is that the latter conjecture is indeed correct under the additional assumption that $\Theta(V,\,\cdot\,)\geq \Theta (V,x_0)$ a.e.\ in a neighborhood of $x_0$. Note that, by the upper semicontinuity of $\Theta(V,\,\cdot\,)$ and the fact that it is integer-valued $\mathcal{H}^m$-a.e., the above is equivalent to say that $\Theta(V,\,\cdot\,)$ equals $\Theta (V,x_0)$ a.e.\ in some neighborhood of $x_0$. We could then locally factor this constant value and reduce to assuming that $\Theta(V,\,\cdot\,)=1$ a.e..

The great challenge in Conjecture \ref{c:flat=regular} lies therefore in the possibility that $x_0$ is the accumulation of small regions of $\supp(V)$ with positive Hausdorff measure where the density of the varifold is at most $Q-1$ (a.e.). This is in fact precisely what happens in the examples of Allard and Brakke under the more general assumption that the mean curvature of $V$ is bounded rather than $0$. In these examples, however, such bad ``singular points" have a rather high order of contact with a smooth surface. If we imagine that this is a general fact, for stationary varifold at a ``bad'' point we could hope that there is a classical minimal surface with a high order a contact with the varifold. We could therefore advance the following conjecture:

\begin{con}\label{c:flat-high-order}
Assume $V$ and $x_0$ satisfy the assumptions of Conjecture \ref{c:flat=regular}. Then there is a smooth classical minimal $m$-dimensional graph $\mathcal{M}$ in some neighborhood of $x_0$ with the property that 
\begin{equation}\label{e:order-of-contact}
\int_{\mathbf{B}_r (x_0)} \dist (x, \mathcal{M})^2 \,\dd\|V\| (x) = o (r^N)\qquad \mbox{for every $N\in \mathbb N$\,.}
\end{equation}
\end{con}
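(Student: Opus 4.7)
The plan would be to first normalize the setup: by rescaling and rotating, assume $x_0 = 0$, $T_{x_0}V = \pi_0 \defeq \mathbb R^m \times \{0\}$, and $\Theta(V,0) = Q$. Set $S(r) \defeq \{\Theta(V,\cdot) \neq Q\}\cap \mathbf B_r(0)$, so that the hypothesis reads $\|V\|(S(r)) = o(r^m)$. On the complement of $S(r)$ the density equals $Q$ and, at sufficiently small scales, the cylindrical excess over $\pi_0$ is small, so item (b) from the introduction would give that $\supp(V)\setminus S(r)$ is locally a classical $Q$-sheeted minimal graph in large subregions of $\mathbf B_r(0)$; the set $S(r)$ is the only obstruction to a global such description.

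The construction of $\mathcal M$ would then proceed via a Campanato-type iteration. At each dyadic scale $r_k = 2^{-k}$, apply the Lipschitz approximation revisited in this paper to the rescaled varifold $(\tau_{0,r_k})_\sharp V$, obtaining a $Q$-valued Lipschitz map $f_k$ whose graph agrees with $V$ on $\mathbf B_1(0)$ up to a set of $\|V\|$-measure $o(1)$. The decay hypothesis $\|V\|(S(r_k)) = o(r_k^m)$ forces any weak limit of the (suitably normalized) $f_k$ to be a single-valued harmonic function with constant multiplicity $Q$: any genuine branching in the limit would produce a set of positive $\mathcal H^m$-density on which $\Theta < Q$, contradicting the decay rate of $S(\cdot)$. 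Taking arithmetic means $\bar f_k$ and solving the minimal surface equation with matching boundary data on a slightly smaller ball yields, at each scale, a classical minimal graph $\mathcal M_k$. Combining the excess decay revisited in this paper with a harmonic approximation and Campanato iteration should then produce a limiting smooth minimal graph $\mathcal M$ near the origin together with a polynomial-order estimate
\[
\int_{\mathbf B_r(0)} \dist(x, \mathcal M)^2 \, \dd\|V\|(x) \le C_\alpha\, r^{m+2+\alpha}
\]
for some exponent $\alpha > 0$ depending on the base excess decay rate.

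The main obstacle is upgrading polynomial contact to the infinite-order statement \eqref{e:order-of-contact}. For area-minimizing or stable integral currents this step is handled by a \L ojasiewicz--Simon inequality or by Almgren's frequency monotonicity for the remainder $V - Q\a{\mathcal M}$; both rely on variational information (minimality, stability) that is unavailable for merely stationary varifolds. The difficulty in the general case is sharp: although $\|V\|(S(r)) = o(r^m)$, the quantity controlling \eqref{e:order-of-contact} is the second moment of $\dist(\cdot, \mathcal M)$ against $\|V\|\res S(r)$, which, absent further structural information, could in principle decay only polynomially even if $\|V\|(S(r))$ itself decays much faster. Ruling out such pathological concentration of the bad set around $\mathcal M$ seems to require, as a genuine prerequisite, precisely the kind of branch-point description that is the content of Conjecture \ref{c:flat=regular}. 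In our view this tight coupling between infinite-order contact and the fine structure of multiplicity jumps is the real reason the conjecture remains out of reach with current techniques.
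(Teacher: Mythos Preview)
The statement you are addressing is a \emph{Conjecture}, not a theorem: the paper does not contain a proof of it. It is explicitly presented as an open problem, and the paper remarks that only a weaker version (with $\mathcal M$ smooth but not necessarily minimal) is obtained in the forthcoming work \cite{BDF}. There is therefore no ``paper's own proof'' to compare your proposal against, and your write-up is not a proof either --- as you yourself acknowledge in the final paragraph, the argument does not close and the conjecture remains open.

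That said, a few of the heuristic steps in your outline are not quite right even at the informal level. Your invocation of item (b) is misplaced: (b) requires a full \emph{neighborhood} $U'$ of $x$ on which $\Theta\ge Q$ $\mathcal H^m$-a.e., whereas knowing $\Theta(V,x)=Q$ at the single point $x$ (i.e.\ $x\notin S(r)$) gives no such neighborhood --- the set $\{\Theta<Q\}$ could a priori be dense in $\supp(V)$ even though it has small measure. So the claim that $\supp(V)\setminus S(r)$ is ``locally a classical $Q$-sheeted minimal graph in large subregions'' does not follow from (b); what one actually has is the Lipschitz approximation of Theorem~\ref{Lipapprox}, which is considerably weaker. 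Likewise, the excess decay of Theorem~\ref{decay} already \emph{assumes} the smallness of $\mathcal H^m(\{\Theta<Q\}\cap\bB_r)$ at every scale, so it cannot be used to bootstrap that very hypothesis. These gaps are consistent with your own conclusion that current techniques do not reach the conjecture, but they mean the outline should be read as motivation rather than as a genuine proof strategy.
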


If this were correct, then Conjecture \ref{c:flat=regular} would essentially turn into a suitable unique-continuation problem for stationary varifolds. This would be no surprise to the experts. Indeed before Almgren's celebrated Big regularity paper \cite{Almgren00} the regularity theory of area-minimizing currents in codimension higher than $1$ was stuck essentially at the same point as are stationary varifolds now. When Almgren was famously able to solve the problem, part of his approach translated into a new way to prove unique continuation for elliptic PDEs, see e.g. the survey \cite{DeLellis-survey}. Following this path, the second big step to Conjecture \ref{c:flat=regular} would then be

\begin{con}\label{c:unique-continuation}
Assume $V$, $x_0$, and $\mathcal{M}$ are, respectively, a stationary varifold, a point $x_0\in \supp(V)\cap U$, and a classical smooth minimal $m$-dimensional surface such that \eqref{e:order-of-contact} holds. Then $\supp(V)\subseteq \mathcal{M}$ in some neighborhood of $x_0$.
\end{con}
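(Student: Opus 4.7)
The strategy is to argue by contradiction via a blow-up analysis that reduces the claim to unique continuation for stationary multi-valued Dirichlet functions. First, working in Fermi coordinates around $\mathcal{M}$ in a neighborhood of $x_0$, one may assume $x_0 = 0$ and $\mathcal{M} = B_1 \cap (\R^m \times \{0\})$. Since $\mathcal{M}$ is minimal, the first variation of $V$ in the new coordinates is a bounded vector field vanishing to first order on $\{y=0\}$, so the monotonicity formula, the height bound, and the Lipschitz approximation established in this paper remain valid with the same qualitative estimates. The hypothesis \eqref{e:order-of-contact} becomes
\begin{equation*}
E(r) \defeq \int_{\mathbf{B}_r(0)} |y|^2 \, \dd\|V\|(x,y) = o(r^N) \qquad \text{for every } N \in \NN.
\end{equation*}

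Suppose for contradiction there is a sequence $z_k = (x_k, y_k) \in \supp(V)$ with $z_k \to 0$ and $h_k \defeq |y_k| > 0$, and set $r_k \defeq |z_k|$. The monotonicity formula applied at $z_k$ gives $\|V\|(\mathbf{B}_{h_k/2}(z_k)) \geq c h_k^m$, hence $E(2 r_k) \geq c h_k^{m+2}$; the hypothesis then forces $h_k/r_k^N \to 0$ for every $N$. We rescale at carefully chosen scales $\rho_k$, selected so that, after rescaling, the $L^\infty$-height of $\supp(V)$ and the $L^2$-excess become comparable and the witness $z_k$ remains visible, and apply Allard's compactness together with the Lipschitz approximation of this paper to obtain $Q$-valued maps $f_k \colon B_{1/2} \to \mathcal{A}_Q(\R^n)$ with $\|f_k\|_\infty + \Lip(f_k) \to 0$ and the quantitative Dirichlet estimates. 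After renormalization by the chosen height scale, the $f_k$ converge along a subsequence, weakly in $W^{1,2}$ and strongly in $L^2$, to a nontrivial $Q$-valued limit $g_\infty$, which inherits from the stationarity of $V$ the structure of a stationary critical point of the Dirichlet energy in the Almgren sense.

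The superpolynomial decay of $E(r)$ translates, after the rescaling, into $\int_{B_\rho} |g_\infty|^2 = 0$ for every $\rho > 0$, that is, $g_\infty$ vanishes identically in a neighborhood of the origin. Unique continuation for stationary $Q$-valued maps then forces $g_\infty \equiv 0$ on $B_{1/2}$, contradicting the nontriviality guaranteed by the witness, and proving $\supp(V) \subseteq \mathcal{M}$ near $x_0$.

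The central obstacle lies in the very last step. For area-minimizing currents, Almgren's approximation theorem identifies the blow-up as a genuine Dir-minimizer, for which Almgren's frequency function is monotone and unique continuation follows. For stationary varifolds, the blow-up of the Lipschitz approximation is only a \emph{stationary} $Q$-valued map (a critical point of the Dirichlet energy rather than a minimizer), and to our knowledge neither the identification of the blow-up with a sufficiently regular such map nor the unique continuation theorem for this class is available in general. A secondary but still delicate issue is the choice of the rescaling scales $\rho_k$, which must be tuned so that both the isolated witness and the superpolynomial contact decay remain simultaneously visible in the limit; the naive excess normalization fails the former, while normalizing by the $L^\infty$-height of the witness typically fails the latter. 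Together, these two gaps encapsulate precisely what is open in Conjecture~\ref{c:unique-continuation}.
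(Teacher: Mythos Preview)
The statement you are attempting to prove is labelled \emph{Conjecture} in the paper, and the paper does not supply a proof of it; it is explicitly presented as an open problem, and the paper's contributions are directed only at the separate Conjecture~\ref{c:flat-high-order}. There is therefore no ``paper's own proof'' to compare your argument against.

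As for the content of your sketch: you yourself identify, in the final paragraph, the two genuine gaps that prevent it from being a proof, and your diagnosis is accurate. The step where the renormalized Lipschitz approximations are claimed to converge to a stationary $Q$-valued Dirichlet map, and the subsequent appeal to unique continuation for that class, are not available in the literature and are precisely the content of the conjecture. In fact the situation is slightly worse than you indicate: the error estimate \eqref{measureestimatem} in the Lipschitz approximation is only linear in the excess (and the catenoid discussion in the paper shows this is sharp), so even the identification of the blow-up limit as a well-defined $Q$-valued stationary map, rather than merely a weak $W^{1,2}$ limit with no variational characterization, is not justified by the tools developed here. The harmonic approximation Lemma~\ref{harmapprox} used in the paper's excess decay requires the additional hypothesis \eqref{ass2} that the sheets mostly coincide, which is exactly what one cannot assume in the general setting of the conjecture. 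Your outline is thus a reasonable heuristic for why one might believe the conjecture, but it is not a proof, and the paper does not claim one.
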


The latter conjecture is open (and interesting) even in the very special case that $\mathcal{M}$ is just an $m$-dimensional plane $\pi_0$.  This particular case, as pointed out to us by Federico Franceschini, can be recasted as another unique continuation problem. As it is well known, coordinate functions are harmonic on classical minimal surfaces. This fact can be generalized to varifolds.

\begin{defn}
Given a $C^1$ function $\varphi: U \to \mathbb R$ and an $m$-dimensional varifold $V$ stationary in $U\subseteq \mathbb R^{m+n}$ we define, for $\mathcal{H}^m$-a.e.\ $x\in \supp (V)\cap U$, the vector $\nabla_{TV} \varphi (x)$ as the projection  of $\nabla \varphi$ on the approximate tangent space to $\supp (V)$ at $x$. A function $h \in C^1 (U)$ is then called {\em harmonic} on $V$ if 
\[
\int \nabla_{TV} h \cdot \nabla_{TV} \varphi \, d\|V\|  = 0 \qquad \text{for every } \varphi\in C^1_c (U)\, .
\]
\end{defn}

In particular, if $h$ is a coordinate function on $\mathbb R^{m+n}$, $e$ the unit vector $\nabla h$, and $\varphi\in C^1_c (U)$, a straightforward computation using the first variation formula shows
\[
0= \delta V (\varphi e) = \int \nabla_{TV} h (x) \cdot \nabla_{TV} \varphi (x)\, d\|V\| (x)\, ,
\]
so that the harmonicity of $h$ follows from the stationarity of $V$.
Assume therefore that $V$, $x_0$ and $\mathcal{M}$ are as in Conjecture \ref{c:unique-continuation}, with the additional information that $\mathcal{M}$ is a plane $\pi_0$. Then for all coordinates functions $h$ which vanish on $\pi_0$ we know that $\int_{B_r (x_0)} h^2 d\|V\|= o (r^N)$ for every $N\in \mathbb N$, while the inclusion $\supp (V)\subseteq \mathcal{M} = \pi_0$ is equivalent to know that any such coordinate function $h$ vanishes identically on $\supp (V)$. This motivates the following 

\begin{con}\label{c:harmonic}
Assume $V$ is a stationary $m$-dimensional varifold in $U\subseteq \mathbb R^{m+n}$, $h$ an harmonic function on $V$, and $x_0\in \supp (V)\cap U$ a point such that 
\[
\int_{B_r (x_0)} h^2 d\|V\|= o (r^N)\qquad \forall N\in \mathbb N\, .
\]
Then $h$ vanishes identically on $\supp (V)\cap B_r (x_0)$ for some $r\in(0, \dist (x_0, \partial U))$.
\end{con}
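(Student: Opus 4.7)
The plan is to prove Conjecture \ref{c:harmonic} via an Almgren frequency function monotonicity tailored to stationary varifolds, and then combine the monotonicity with the infinite-order vanishing hypothesis in the standard way. For $r \in (0, \dist(x_0, \partial U))$ set
\[
D(r) := \int_{B_r(x_0)} |\nabla_{TV} h|^2 \, d\|V\|, \qquad H(r) := \int_{\partial B_r(x_0)} h^2 \, d\mu_r,
\]
where $d\mu_r$ is the slice of $\|V\|$ on $\partial B_r(x_0)$ produced by the coarea formula from $x \mapsto |x-x_0|$, and introduce the frequency $N(r) := r D(r)/H(r)$.

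The heart of the argument consists of two identities. First, testing the harmonicity of $h$ against $h\eta$, with $\eta$ a smooth cutoff for $B_r(x_0)$, and letting $\eta \uparrow \chi_{B_r(x_0)}$, one should derive
\[
D(r) = \int_{\partial B_r(x_0)} h \, \langle \nabla_{TV} h, \nu \rangle \, d\mu_r,
\]
where $\nu$ is the projection of the outward radial direction onto the approximate tangent of $\supp(V)$. Second, testing the stationarity identity $\delta V(X) = 0$ against the radial vector fields $X(x) := \eta(|x-x_0|) h^2(x)(x-x_0)$ and variants involving $\nabla h$ yields identities for $H'(r)$ and for the tangential/normal decomposition of $|\nabla h|^2$ on $\partial B_r(x_0)$. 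Combining these two identities with Cauchy--Schwarz applied to $\int h \, \langle \nabla_{TV} h,\nu \rangle \, d\mu_r$ should give the monotonicity $N'(r) \geq 0$ for a.e.\ $r$.

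Once monotonicity is established the conclusion is routine: if, toward a contradiction, $h \not\equiv 0$ on $\supp(V) \cap B_{r_0}(x_0)$ for every small $r_0$, one can find $r_0$ with $H(r_0) > 0$, and monotonicity gives $N(\rho) \leq N(r_0) =: \Lambda$ for all $\rho \leq r_0$. A standard calculation then produces a polynomial lower bound $H(\rho) \geq c\, \rho^{2\Lambda + m - 1}$, contradicting $\int_{B_\rho} h^2 \, d\|V\| = o(\rho^N)$ for every $N$ (which, after averaging in $\rho$, forces $H$ to vanish faster than any polynomial along a sequence $\rho_k \searrow 0$).

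The main obstacle is the monotonicity itself. For $h$ harmonic on a smooth minimal submanifold this is a classical Garofalo--Lin computation, but here $\supp(V)$ may have a complicated singular set and may intersect $\partial B_r(x_0)$ non-transversely, so all the boundary quantities must be interpreted via slicing, and the limit $\eta \uparrow \chi_{B_r(x_0)}$ has to be justified using only the stationarity of $V$ and the $C^1$ regularity of $h$. These are technical issues typical in varifold analysis, and the frequency monotonicity appears robust enough to survive the passage from smooth minimal submanifolds to general stationary $V$; nevertheless, carrying out the estimates rigorously, particularly near points where $\supp(V)$ carries high multiplicity or a high-dimensional singular stratum, seems to be the delicate step where real work is needed.
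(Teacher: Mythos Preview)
The statement you are attempting to prove is presented in the paper as a \emph{conjecture}, not a theorem; the paper offers no proof and explicitly describes the problem as open. So there is no proof in the paper to compare your proposal against.

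Your strategy is the natural one, and frequency monotonicity is exactly the tool one would reach for. But you have correctly located the gap yourself: the inequality $N'(r)\ge 0$ is not known for harmonic functions on a general stationary varifold, and this is not a merely technical hurdle. The Garofalo--Lin computation on a smooth minimal submanifold uses a Rellich--Ne\v{c}as identity obtained by testing stationarity with vector fields of the form $|\nabla_{TV} h|^2\,(x-x_0)$ and $\langle \nabla_{TV} h,\, x-x_0\rangle\,\nabla_{TV} h$. The second of these is \emph{not} the restriction to $\supp(V)$ of an ambient $C^1$ vector field unless the tangent plane map $x\mapsto T_xV$ is itself $C^1$, so it cannot be inserted into $\delta V(X)=0$; and there is no known way to regularize it while preserving the algebraic cancellations that produce the sign in $N'$. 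In addition, even where $\supp(V)$ is smooth, the presence of higher multiplicity (sheets that touch and separate) makes the boundary quantity $H(r)$ and the slicing limits delicate in precisely the way that allows unique continuation to fail for varifolds with merely bounded mean curvature, as in the Allard and Brakke examples referenced in the paper.

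In short, your outline reduces the conjecture to a frequency monotonicity statement that is itself open and appears to be of comparable difficulty to the original problem; what you have written is a reformulation rather than a proof.
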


In this and the subsequent note \cite{BDF} we will be concerned with partial results towards Conjecture \ref{c:flat-high-order}.

\subsection{Rectifiability} Take an $m$-dimensional  integer rectifiable varifold $V$, assume that $V$ is stationary in an open subset $U\subseteq\RR^{m+n}$. By its very definition, the varifold is $C^1$-rectifiable, in the sense that $\supp(V)$ is a $C^1$-rectifiable subset: $\supp (V)$ can be covered with countably many $C^1$ submanifold with the exception of an $\mathcal{H}^m$-null set. As pointed out by Brakke in \cite{Brakke}, in combination with Almgren's theory of multivalued function, Allard's approach already implies $C^{1,\alpha}$ rectifiability for every $\alpha<1$: the submanifolds of the covering can actually be chosen to be $C^{1,\alpha}$. In the rest of this note we present a more transparent derivation of Brakke's theorem. A much more interesting development, due to Menne in \cite{MenneJGA}, is the $C^2$-rectifiability. In fact Menne's theorem applies to varifolds with bounded mean curvature and it is thus an optimal statement in his context.  In \cite{BDF} the first and second author together with Federico Franceschini prove that stationary varifolds are indeed $C^\infty$ rectifiable. As a byproduct of the proof of \cite{BDF}, we can conclude the following.
\begin{thm}\notag
Assume $V$ and $x_0$ satisfy the assumptions of Conjecture \ref{c:flat=regular}. Then there is a smooth classical (not necessarily minimal) $m$-dimensional graph $\mathcal{M}$ in some neighborhood of $x_0$ with the property that 
\begin{equation}\notag
\int_{\mathbf{B}_r (x_0)} \dist (x, \mathcal{M})^2 \,\dd\|V\| (x) = o (r^N)\qquad \mbox{for every $N\in \mathbb N$\,.}
\end{equation}
\end{thm}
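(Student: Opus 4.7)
The plan is to invoke the $C^\infty$-rectifiability result of \cite{BDF} in its quantitative form and deduce the theorem as a byproduct. Setting $Q := \Theta(V, x_0)$ and $\pi_0 := T_{x_0} V$, the unique flat tangent cone assumption implies that Allard's spherical excess of $V$ over $\pi_0$ in $\mathbf{B}_r(x_0)$ tends to zero as $r \searrow 0$. Hence, for all sufficiently small $\rho$, Allard's regularity theorem applies at every density-$Q$ point in $\mathbf{B}_\rho(x_0)$ and represents $\supp(V)$ locally as a $C^{1,\alpha}$ graph over $\pi_0$ with multiplicity $Q$; the $C^\infty$-rectifiability of \cite{BDF} upgrades each such representation to a $C^\infty$ graph with quantitative Taylor estimates. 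The density-constancy hypothesis guarantees that these regular graphs cover all of $\supp(V) \cap \mathbf{B}_\rho(x_0)$ except for a set of $\|V\|$-mass $o(r^m)$ at scale $r$.

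To produce a single $\mathcal{M}$, I would decompose the regular part of $\supp(V)$ into at most $Q$ local $C^\infty$ sheets $f_1, \dots, f_Q$ over a ball in $\pi_0$ centered at the projection of $x_0$. By uniqueness of the tangent cone, all sheets approach $x_0$ tangentially to $\pi_0$ and can be extended continuously by $f_i(0) = 0$. I would then take $\mathcal{M}$ to be the graph of the arithmetic mean $\bar f := \frac{1}{Q}\sum_i f_i$, an averaged ``center manifold''. The smoothness of $\bar f$ through the origin, and the fact that $\mathcal{M}$ is in general not minimal (the $Q$-valued map $(f_1, \dots, f_Q)$ is only approximately stationary), should both be consequences of the quantitative estimates from \cite{BDF}.

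The main obstacle is the $o(r^N)$ decay of $\int_{\mathbf{B}_r(x_0)} \dist(x, \mathcal{M})^2 \,\dd\|V\|$ for every $N$. On the bad set, the trivial bound $\dist(x, \mathcal{M}) \leq |x - x_0| \leq r$ combined with the $o(r^m)$ mass bound only yields an $o(r^{m+2})$ contribution, far weaker than required. On the regular set the integrand equals $\sum_i |f_i - \bar f|^2$, so super-polynomial decay amounts to showing that the sheets $f_i$ share the same formal Taylor expansion at the projection of $x_0$, i.e.\ they have infinite-order contact at $x_0$. This is the heart of the argument and must be extracted from the iterative structure of the proof in \cite{BDF}: each improvement in the regularity of the averaged graph by one order should yield one additional power of $r$ in the excess decay, and the density-constancy hypothesis prevents any ``multiplicity drop'' from obstructing iteration at some finite order, allowing the bootstrap to run indefinitely.
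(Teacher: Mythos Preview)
The paper does not actually prove this theorem: it is stated in the introduction as ``a byproduct of the proof of \cite{BDF}'', with no further argument given. In that minimal sense your proposal agrees with the paper, since you also invoke \cite{BDF}. However, the scaffolding you add around that citation contains a genuine error.

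Your claim that ``Allard's regularity theorem applies at every density-$Q$ point in $\mathbf{B}_\rho(x_0)$'' is false. Allard's theorem in the multiplicity-$Q$ form (item (b) in the paper's introduction) requires $\Theta(V,\cdot)\geq Q$ $\mathcal{H}^m$-a.e.\ in a neighborhood of the point, not merely $\Theta=Q$ at the point itself. The entire difficulty of Conjecture~\ref{c:flat=regular} is precisely that regions of density $<Q$ may accumulate at $x_0$, and by upper semicontinuity such regions can equally well accumulate at \emph{any} density-$Q$ point $y$ near $x_0$. So there is no reason $y$ is a regular point, and your decomposition of the regular part into $Q$ smooth sheets $f_1,\dots,f_Q$ over a full ball in $\pi_0$ is unjustified. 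Without those sheets, the construction of $\bar f$ and of $\mathcal{M}$ collapses.

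You do correctly identify the crux: what is needed is an infinite-order statement extracted from the \emph{iterative structure} of the proof in \cite{BDF}, not merely from the $C^\infty$-rectifiability statement itself. This is exactly what the paper signals by writing ``byproduct of the proof'' rather than ``corollary''. But the route you propose to reach that iteration---via Allard at nearby density-$Q$ points and a sheet decomposition---does not work; the argument in \cite{BDF} must instead proceed through the multivalued Lipschitz approximation and excess-decay machinery (as developed in the present paper) applied directly at $x_0$, where the density-constancy hypothesis of Conjecture~\ref{c:flat=regular} feeds into Theorem~\ref{decay} at every scale.
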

This can be seen as an answer to a  weaker (as  we are not able to prove that $\mathcal{M}$  is minimal!) version of Conjecture \ref{c:flat-high-order}.

\subsection{Main statements}
We will fix a reference plane $\pi_0\defeq\RR^m\times \{0\}$ in $\mathbb R^{m+n}$. We denote by $\bB_r(x)=\{y:|y-x|<r\}$ the  balls in $\RR^{m\times n}$ and by $B_r(x)$ the disks $\bB_r (x)\cap \pi_0$ and we omit the center $x$ when it is the origin. With an abuse of notation, we are going to use  $B_r(x)$ also for disks in $\RR^n$, however, this will not cause confusion as the nature of each disk will be clear from the context. 
We write $\bar \bB_r(x)$ to denote the closed balls, i.e.\ $\{y:|y-x|\le r\}$, and we use a similar notation for the disks. 
In addition $\bC_r{(x,\pi)}=\{y\in\RR^n: |\p_{\pi}(y-x)|<r\}$ will denote the cylinder of radius $r$ with ($n$-dimensional) axis going through center $x$ and perpendicular to $\pi$ (if $x=0$ and $\pi=\pi_0$, we simply write $\bC_r$).   Here (and in the rest of the notes) $\p_\pi$ denotes the orthogonal projection on $\pi$, while the distance between two (unoriented) planes $\pi$ and $\pi'$ will be measured through $|\p_{\pi}-\p_{\pi'}|$, the Hilbert-Schmidt norm of the difference between the respective orthogonal projections.
    
Unless specified, the $m$-dimensional integral varifold $V$ under consideration is assumed to be stationary in any open set appearing in our arguments (typically some cylinder). As above $\|V\|$ will denote the corresponding Radon measure, while the density $\Theta$ will be defined at every point by \eqref{e:density}, but the dependence on $V$ will be dropped when clear from the context. 

We define the cylindrical and spherical excesses of $V$ in cylinders and balls as 
	\begin{align}
		\bE(V,\bC_r(x,\pi'),\pi)&\defeq\frac{1}{\omega_m r^m}\int_{\bC_r(x,\pi')} |\p_{T V}-\p_{\pi}|^2\,\dd\|V\|\,,\\
         \bE(V,\bB_r(x))&\defeq \inf_{\pi}\frac{1}{\omega_m r^m}\int_{\bB_r(x)} |\p_{T V}-\p_{\pi}|^2\,\dd\|V\|\,.
	\end{align}
	and we will use the shorthand notation $\bE_r\defeq \bE(V,\bC_r,\pi_0)$. 
    
The letter $Q$ is used for an important (positive) integer parameter, which can be understood as the number of times that the varifold (approximately) covers the base of the cylinder. We stress that this holds only in the approximate sense. For instance in the case of a classical $2$-dimensional catenoid which in a given $3$-dimensional cylinder $\bC_r (x, \pi)$ is very close to its planar cross section $B_r (x, \pi)$ and is invariant under rotation around the axis of the cylinder, the parameter $Q$ will be $2$, even though there will always be some (possibly small) region $B_\rho (x, \pi)$ with the property that $\p_{\pi}^{-1} (B_\rho (x, \pi)) \cap \supp (V) = \emptyset$. The parameters $m,n,Q$ will be called ``geometric parameters''.

\medskip

  Our first theorem is an $L^\infty$ height bound: the support of the stationary varifold in a cylinder is contained in at most $Q$ strips of width comparable to a suitable function of the excess in a bigger cylinder. Moreover, if the varifold has a point of density $Q$, then we can choose a single strip, with width comparable to the square root of the excess in the bigger cylinder. 
  For what concerns the first estimate, \eqref{bvefdasdue}, the dependence in terms of the excess is  optimal at least in dimension $2$, and this can be shown  using catenoids (we refer to the next section). Instead,  the second estimate, \eqref{heighatdensity}, cannot be improved even for classical minimal graphs, irrespectively of the dimension and codimension.
  
    \begin{thm}[Height bound]\label{heightdue} Let $V$ be stationary in $\bC_1$ and $r\in (0,1)$.
Assume that $\bE_1$ is small enough (depending upon $ r$ and the geometric parameters) and that $\frac{\|V\|(\bC_1)}{\omega_m}\le Q+\textstyle{\frac{1}{2}}$. 
Then there exist a constant $C= C(r,m,n,Q)$ and points  $y_1,\dots,y_Q\in \pi_0^\perp$ (not necessarily distinct) such that 
		\begin{equation}\label{bvefdasdue}
			\supp(V)\cap\bC_{ r}\subseteq \bigcup_{h=1,\dots,Q} \pi_0\times \bar B_{C(r)|\log\bE_1|^{1-1/m}\bE_1^{1/m}}(y_h, \pi_0^\perp)\,.
		\end{equation}
If in addition $\Theta(0)=Q$, then 
   \begin{equation}\label{heighatdensity}
      \supp(V)\cap \bC_r\subseteq \pi_0\times \bar B_{C_0\bE_1^{1/2}}(0, \pi_0^\perp)\,.
  \end{equation}
\end{thm}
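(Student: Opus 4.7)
My strategy is to reduce both bounds to Allard's monotonicity formula via the tilt--height comparison
\[
\bigl|\p_{T_y V^\perp}(y) - \p_{\pi_0^\perp}(y)\bigr| \le |\p_{T_y V} - \p_{\pi_0}|\cdot|y|,
\]
which converts the monotonicity integrand $y^\perp := \p_{T_y V^\perp}(y)$ into information on the cylindrical height $\p_{\pi_0^\perp}(y)$. Both inequalities should then follow by integrating against $|y|^{-(m+2)}\,\dd\|V\|$ and extracting a pointwise bound at a chosen point of $\supp(V)$.

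\textbf{Sharper bound \eqref{heighatdensity}.} Under $\Theta(V,0)=Q$, monotonicity at the origin reads
\[
\int_{\bB_\rho} \frac{|y^\perp|^2}{|y|^{m+2}}\,\dd\|V\| \;=\; \frac{\|V\|(\bB_\rho)}{\omega_m\rho^m} - Q, \qquad \rho\in(0,1].
\]
The crux is to upgrade the trivial right-hand side bound of $\tfrac12$ to $C\bE_1$. I would prove this via the area formula: since $V$ is integer rectifiable and the $\pi_0$-Jacobian satisfies $J_{\pi_0}(y) \ge 1 - \tfrac12|\p_{T_yV}-\p_{\pi_0}|^2$, the mass $\|V\|(\bB_1)$ differs from $Q\omega_m$ only by the integrated Jacobian deficit and a controlled boundary term, both absorbed into $C\bE_1$. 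Squaring the tilt--height comparison, integrating against $|y|^{-(m+2)}\,\dd\|V\|$, and handling the remainder $\int |\p_{T_yV}-\p_{\pi_0}|^2 |y|^{-m}\,\dd\|V\|$ by a layer-cake in $|y|$ together with the monotone growth $\|V\|(\bB_s)\le C s^m$, gives
\[
\int_{\bB_\rho}\frac{|\p_{\pi_0^\perp}(y)|^2}{|y|^{m+2}}\,\dd\|V\| \le C\bE_1.
\]
For $p\in\supp(V)\cap\bC_r$, I then apply monotonicity at $p$ on $\bB_\sigma(p)$ with $\sigma\sim 1-r$, using $\Theta(p)\ge 1$ to extract the pointwise bound $|\p_{\pi_0^\perp}(p)|^2 \le C\bE_1$.

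\textbf{Rough bound \eqref{bvefdasdue}.} Without the density assumption, the $\bE_1^{1/m}$ scaling with the logarithmic correction arises from an $L^2\!\to\!L^\infty$ upgrade of the tilt excess over the $m$-dimensional support. The plan is to decompose $\supp(V)\cap\bC_r$ into at most $Q$ connected sheets (forced by the mass hypothesis and smallness of the excess), and within each sheet to chain two points $p,q$ by a dyadic family of balls at scales $2^{-k}$, $0\le k\le N$ with $N\sim|\log\bE_1|$. A localized version of the monotonicity argument above gives, at each scale $\sigma=2^{-k}$, an oscillation bound of order $\sigma\cdot(\text{local normalized excess})^{1/2}$. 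Summing via Cauchy--Schwarz over the $N$ scales, the $L^2$ structure of the excess combines with the $m$-dimensional geometry to produce the $\bE_1^{1/m}$ exponent, while a H\"older step over the $N\sim|\log\bE_1|$ scales injects the $|\log\bE_1|^{1-1/m}$ factor.

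\textbf{Main obstacle.} The delicate step is the defect bound $\|V\|(\bB_1)/\omega_m - Q \le C\bE_1$, where integer multiplicity of $V$ is needed essentially to prevent fractional mass distributions that inflate the monotonicity defect without being detected by the tilt excess; once this is in hand the rest of the argument for \eqref{heighatdensity} is algebraic, and the covering step for \eqref{bvefdasdue} is standard dyadic work.
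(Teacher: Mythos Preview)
Your argument for \eqref{heighatdensity} has two genuine gaps. First, the mass defect bound $\frac{\|V\|(\bB_\rho)}{\omega_m\rho^m}-Q\le C\bE_1$ cannot be read off the area formula as you suggest: the area formula gives $\|V\|(\bB_\rho)=\int N(x)\,dx+O(\bE_1)$ with $N(x)=\sum_{y\in\p_{\pi_0}^{-1}(x)\cap\supp(V)}\Theta(y)$, and what you need is $N\equiv Q$ a.e.\ outside a set of measure $O(\bE_1)$. Integer rectifiability alone does not give this (for the catenoid $N$ jumps between $0$ and $2$); the paper obtains it from the Lipschitz approximation, via the area estimate \eqref{area} on $K_\lambda$ together with the measure bound \eqref{measureestimate} on $B_1\setminus K_\lambda$. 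Second, your remainder integral $\int_{\bB_\rho}|\p_{T_yV}-\p_{\pi_0}|^2|y|^{-m}\,d\|V\|$ is not controlled by $C\bE_1$ through layer-cake and $\|V\|(\bB_s)\le Cs^m$: that combination gives $\int_0^1 s^{-1}\,ds$, which diverges, and in fact the integral need not even be finite (nothing forces the tangent cone at $0$ to equal $\pi_0$). The paper avoids this entirely: from the monotonicity defect it extracts the \emph{unweighted} bound $\int_{\bB_{\bar\rho}}|\p_{\pi_0^\perp}(y)|^2\,d\|V\|\le C\bE_1$, then upgrades $L^2$ to $L^\infty$ using the first height bound (to localize to a strip) and the subharmonicity of $|\p_{\pi_0^\perp}|^2$ on the varifold.

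For \eqref{bvefdasdue} your dyadic chaining does not produce the exponent $1/m$: an oscillation bound $\sigma\cdot(\text{local excess})^{1/2}$ at scale $\sigma=2^{-k}$ presupposes scale-invariant excess control you do not have, and summing over $N\sim|\log\bE_1|$ scales with Cauchy--Schwarz yields at best a factor $|\log\bE_1|^{1/2}$ and no $m$-dependence. The actual mechanism is quite different. The paper applies the first Lipschitz approximation to get a $Q$-valued $f$ with $\int_{B_1}|Df|^q\le C_0^q\lambda^{-q/2}\int|\p_{TV}-\p_{\pi_0}|^q\,d\|V\|\le C\bE_1$ for every $q\ge 2$ (the tilt is bounded, so the $L^q$ norm is controlled by the $L^2$ norm), then invokes Morrey's inequality with exponent $q\in(m,m+1)$ to get $\osc f\le C(q-m)^{-1+1/q}\bE_1^{1/q}$, and finally optimizes by taking $q=m-1/\log\bE_1$, which produces precisely $|\log\bE_1|^{1-1/m}\bE_1^{1/m}$. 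The bad set $B_1\setminus K_\lambda$ is handled by the measure estimate and the crude height bound \eqref{brvfeads}.
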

From now on in order to keep our notation lighter, we will identify $\pi_0$ with the factor $\mathbb R^m$ and $\pi_0^\perp$ with the factor $\mathbb R^n$ in the product space $\mathbb R^{m+n}$. The second theorem
is a Lipschitz approximation result: given a stationary varifold, there exists a Lipschitz $Q$-valued function whose graph coincides  with the varifold up to a set of size comparable to the excess. In order to state it we introduce the following useful object, while for the notation on multivalued functions we follow \cite{DSq}.

\begin{defn}\label{defnmax}
	We define the ``non-centered'' maximal function for $\bE$ as
	\begin{equation}
		\bmax\be(x)\defeq \sup_{x\in\bC_s(y)\subseteq\bC_4}\bE(V,\bC_s(y))\,.
	\end{equation}
\end{defn}

 \begin{thm}[Lipschitz approximation]\label{Lipapproxstrong} Let $V$ be stationary in $\bC_4$.
	Assume that $\bE_4$ is small enough (depending upon the geometric parameters), that $\frac{\|V\|(\bC_4)}{\omega_m 4^m}\le Q+1$, and that
	\begin{equation}%\label{vfeadsc}
 \frac{\|V\|(\bC_{3})}{\omega_m 3^m}  \in(Q-\textstyle{\frac{1}{2}},Q+\textstyle{\frac{1}{2}})\, .
	\end{equation} 
	Set, for $\lambda \in (0,1)$ small enough (depending upon the geometric parameters),
	\begin{equation}
		K_\lambda\defeq \{x\in B_1:\bmax\be (x)\le \lambda\}\,.
	\end{equation}
Then, there exists a $Q$-valued Lipschitz function $f:B_1\rightarrow\Iqs$ such that  
\begin{equation}\label{Lipdcsaim}
		f\text{ is }C_0|\log\lambda|^{1-1/m}\lambda^{1/m}\text{-Lipschitz}\,.
	\end{equation}
In addition:
\begin{itemize}
\item[(i)] For every $x\in K_\lambda$ and $x\ni B_{s}(y)\subseteq B_3$  
\begin{equation}\label{vadfscas}
\frac{\|V\|(\bC_{s}(y))}{\omega_m s^m}\in (Q-\textstyle{\frac{1}{2}},Q+\textstyle{\frac{1}{2}})\, .
\end{equation}
\item[(ii)] For every $x$ and $s$ as in (ii), if $f(x)=\sum_i Q_i\a{p_i}$, then $\supp(V)\cap(\{x\}\times\RR^n) = \bigcup_i (x,p_i)$ and
\begin{equation}\label{brvfeads}
\supp(V)\cap \bC_s(y)\subseteq  B_s(y)\times \bigcup_i B_{C_0\lambda^{1/(2m)} s}(p_i)\,.
\end{equation}
\item[(iii)] The following estimate holds 
	\begin{equation}\label{measureestimatem}
		|B_1\setminus K_\lambda|+\|V\|(\bC_1\setminus (K_\lambda\times\RR^n))\le C_0\frac{1}{\lambda}\bE_4\,.%\int_{\bC_4}|\p_{T V}-\p_{\pi}|^2\,\dd\|V\|
	\end{equation}
\item[(iv)] For every $(x,y)\in (K_\lambda\times \RR^n)\cap\supp(V)$, if we write as above $f(x)=\sum_i Q_i\a{p_i}$, then
	\begin{equation}\label{densdvacs}
		\Theta(x,y)=\sum_{i:p_i=y}Q_i\,.
	\end{equation}
\item[(v)] Finally, if $A>0$ is such that  $\supp(V)\cap\bC_1\subseteq \RR^m\times B_{A}(0)$, then 
    \begin{equation}\label{vefadcsxac}
		\sup_{x\in B_1} \cG(f(x),Q\a{0})\le C_0 A\,.
	\end{equation}
\end{itemize}
\end{thm}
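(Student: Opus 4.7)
The plan is to build $f$ pointwise on $K_\lambda$ from the height bound (Theorem~\ref{heightdue}), extend it by Almgren's $Q$-valued Lipschitz extension theorem, and control the bad set via a Vitali covering driven by $\bmax\be$.

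\emph{Values on $K_\lambda$ and items (i), (ii), (iv).} Fix $x\in K_\lambda$ and any admissible cylinder $\bC_s(y)\subseteq \bC_3$ with $x\in B_s(y)$. The pointwise inequality $|\det \p_{\pi_0}|_{T_z V}|\ge 1 - C|\p_{T_z V}-\p_{\pi_0}|^2$ yields
\begin{equation}
    \Bigl|\|V\|(\bC_s(y)) - \int_{B_s(y)} N(\xi)\, d\xi\Bigr| \le C\omega_m s^m\, \bE(V,\bC_s(y)) \le C\omega_m s^m\, \lambda,
\end{equation}
where $N(\xi)$ is the weighted fiber count; chaining this estimate between $\bC_3$ and $\bC_s(y)$ via the hypothesis on $\|V\|(\bC_3)$ forces \eqref{vadfscas}. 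After rescaling, Theorem~\ref{heightdue} places $\supp(V)\cap \bC_{s/2}(y)$ into at most $Q$ horizontal slabs of width $Cs|\log\lambda|^{1-1/m}\lambda^{1/m}\le Cs\lambda^{1/(2m)}$ (the last inequality absorbing the logarithmic factor). Letting $s\searrow 0$ collapses the fiber over $x$ to finitely many points $p_i$, each with integer multiplicity $Q_i$; the mass identity above forces $\sum_i Q_i = Q$. Setting $f(x)\defeq \sum_i Q_i\, \a{p_i}$ yields \eqref{brvfeads} and \eqref{densdvacs}.

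\emph{Lipschitz bound and extension.} For $x_1,x_2\in K_\lambda$, pick $s$ comparable to $|x_1-x_2|$ and a cylinder $\bC_s(y)\subseteq \bC_3$ containing both. The height bound places $\supp(V)\cap \bC_{s/2}(y)$ in at most $Q$ disjoint horizontal slabs of width $w\le Cs|\log\lambda|^{1-1/m}\lambda^{1/m}$, and on each \emph{isolated} slab the restricted varifold is stationary with integer multiplicities that are locally constant in the base variable, via Allard's constancy theorem applied at the appropriate scale. This produces a canonical sheet structure, so matching the values of $f(x_1)$ and $f(x_2)$ slab-by-slab gives $\cG(f(x_1),f(x_2))\le Cw$, that is \eqref{Lipdcsaim}. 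I then invoke Almgren's Lipschitz extension theorem for $\Iqs$-valued maps (see \cite{DSq}) to extend $f$ from $K_\lambda$ to $B_1$ preserving the Lipschitz constant up to a dimensional factor. Item (v) is immediate, since the values of $f$ on $K_\lambda$ already lie in $\bar B_{CA}(0)$ by construction, and Almgren's extension preserves this bound.

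\emph{Bad set estimate.} Every $x\in B_1\setminus K_\lambda$ admits a cylinder $\bC_{s(x)}(y(x))\subseteq \bC_4$ containing $x$ with $\bE(V,\bC_{s(x)}(y(x)))>\lambda$. Applying the Vitali $5r$-covering lemma, we extract a disjoint subfamily $\{\bC_{s_j}(y_j)\}$ such that $\{B_{5s_j}(y_j)\}$ covers the bad set; cylindrical monotonicity and the hypothesis $\|V\|(\bC_4)\le (Q+1)\omega_m 4^m$ give $\|V\|(\bC_{5s_j}(y_j))\le Cs_j^m$, whence
\begin{equation}
|B_1\setminus K_\lambda| + \|V\|(\bC_1\setminus (K_\lambda\times\RR^n)) \le C\sum_j s_j^m \le \frac{C}{\lambda}\sum_j \int_{\bC_{s_j}(y_j)}|\p_{TV}-\p_{\pi_0}|^2\,d\|V\| \le \frac{C\bE_4}{\lambda}.
\end{equation}
The hardest step will be the Lipschitz estimate: one must justify that the $Q$ slabs produced by Theorem~\ref{heightdue} support a \emph{canonical} sheet structure on $K_\lambda$, so that matching values at two nearby good points costs only the slab width rather than the inter-slab separation. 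This hinges on combining the disjointness of the slabs with a monodromy/constancy argument for the multiplicity of the restricted stationary varifold in each isolated slab.
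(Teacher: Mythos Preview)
Your outline---define $f$ on $K_\lambda$ via the height bound, extend by the $Q$-valued Lipschitz extension theorem, and control the bad set by a Vitali covering---matches the paper's. The genuine gap is in your proof of item (i), and that gap propagates to the Lipschitz estimate and to the covering argument.

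Your fiber-count identity gives $\|V\|(\bC_s(y))/(\omega_m s^m)\approx \fint_{B_s(y)} N$ up to $C\lambda$, but $\fint N$ is a real number with no a priori reason to be close to an integer; ``chaining'' from $\bC_3$ therefore cannot force the mass ratio to stay near $Q$ unless you already know $N$ is a.e.\ constant on $B_3$---which you do not. The paper proves (i) differently: it iterates, at dyadic scales, the compactness-based Lemma~\ref{vfedacssc} (in the small-excess limit $V$ converges to a union of parallel planes with constant integer multiplicity, so the mass ratio is \emph{exactly} an integer $Q'$), and then pins $Q'=Q$ from the hypothesis on $\|V\|(\bC_3)$.

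This missing ingredient resurfaces twice. First, your Lipschitz bound appeals to ``Allard's constancy theorem'' on each slab, but that theorem needs the varifold to be supported in a connected $C^1$ submanifold, which is precisely what is not yet known; the paper instead reapplies the Step~1 mass-ratio control to each restricted slab varifold $V_h^{y,s}$, so that its mass ratio in nested cylinders stays near a fixed integer $Q_h$, and matching sheets between $x_1$ and $x_2$ follows from comparing these integers in a common cylinder. Second, your Vitali argument invokes ``cylindrical monotonicity'', which is not available for stationary varifolds; the paper obtains $\|V\|(\bC_{5r'}(y'))\le C(r')^m$ by choosing $r'$ larger than half the supremal bad radius, so that every cylinder $\bC_t(z)\supseteq\bC_{5r'}(y')$ has excess $\le\lambda$ and the Step~1 mass bound applies.
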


We then use the above Lipschitz approximation result in the classical strategy of Allard to prove excess decay. More precisely, if at some point $x_0$
\begin{itemize}
    \item the density is $Q$,
    \item the portion of the points of density smaller than $Q$ is small at every scale around $x_0$, 
    \item and the spherical excess at some initial scale is sufficiently small, 
\end{itemize}
then the spherical excess decays almost quadratically at $x_0$.

\begin{thm}[Excess decay]\label{decay} For every $\delta\in (0,1)$ there is a threshold $\epsilon = \varepsilon (\delta, Q, m, n) >0$ and a constant $C= C (\delta, Q,m,n)$ with the following property. If $V$ is stationary in $\bB_1$ and the following conditions hold 
 \begin{gather}
     \bE(V,\bB_1)<\epsilon\,,\\
     \frac{\mathcal{H}^m(\{\Theta<Q\}\cap \supp (V) \cap \bB_r)}{\omega_m r^m}<\epsilon\qquad\text{for every $r\in (0,1)$}\,,\\
     \Theta(0)=Q\quad\text{and}\quad
     \frac{\|V\|(\bB_1)}{\omega_m}<Q+\epsilon\,,\label{vrfecdsccas}
 \end{gather}
	then
	\begin{equation}
		\bE(V,\bB_r)\le C(\delta) r^{2-2\delta}\bE(V,\bB_1)\qquad\text{for every $r\in (0,1)$}\,.
	\end{equation}
\end{thm}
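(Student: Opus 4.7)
I would follow the Allard--De Giorgi iteration template: prove a single-scale almost-quadratic decay and iterate. All hypotheses of the theorem are scale-invariant --- the density $\Theta(0)=Q$, the mass bound $\|V\|(\bB_r)/(\omega_m r^m)\le Q+\epsilon$ (automatic at smaller scales by monotonicity once $\Theta(0)=Q$), and the hypothesis on $\{\Theta<Q\}$ at every $r$ --- so it suffices to prove the following one-step estimate: there exists $\theta=\theta(\delta,Q,m,n)\in(0,1/4)$ such that for some plane $\pi_*$,
\[
\bE(V,\bB_\theta,\pi_*)\le \theta^{2-2\delta}\bE(V,\bB_1).
\]
Iterating on $(\tau_{0,\theta^k})_\sharp V$ and interpolating between dyadic radii then gives the thesis.

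For the one-step estimate I would first rotate so that the optimal plane at scale $1$ is $\pi_0$. Using Theorem~\ref{heightdue} and monotonicity, a mild fixed rescaling puts $V$ into the hypotheses of Theorem~\ref{Lipapproxstrong}, producing, for a parameter $\lambda$ to be tuned, a $Q$-valued Lipschitz map $f:B_1\to\Iqs$ whose graph coincides with $\supp(V)$ over $K_\lambda$. Since $\Theta(0)=Q$ and the integrated monotonicity formula forces $\bmax\be(0)$ to be small, one checks $0\in K_\lambda$; property (iv) of Theorem~\ref{Lipapproxstrong} then yields $f(0)=Q\a{0}$, so the Almgren average $\bar f\defeq\boldsymbol\eta\circ f$ satisfies $\bar f(0)=0$.

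The crux is a harmonic approximation. Testing the stationarity identity $\delta V(\varphi e)=0$ for $e\in\pi_0^\perp$ and $\varphi\in C^1_c(B_1)$, and splitting the integral over $K_\lambda\times\RR^n$ and its complement, one obtains on the former a PDE for $\bar f$ (via the area formula) up to a quadratic error in the Lipschitz constant, and on the latter an error controlled by the measure bound \eqref{measureestimatem} combined with $\cH^m(\{\Theta<Q\}\cap\bB_r)\le\epsilon\omega_m r^m$. The classical harmonic approximation lemma then produces $h:B_1\to\RR^n$ harmonic with $\int_{B_1}(|\bar f-h|^2+|D\bar f-Dh|^2)\le\omega(\bE_1,\lambda)\bE_1$, where $\omega\to 0$ as $\bE_1,\lambda\to 0$ along the chosen scaling. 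Tilting the reference plane to $\pi_*\defeq\mathrm{graph}(Dh(0))$ and using the Taylor expansion $|h(x)-h(0)-Dh(0)\cdot x|\le C\|h\|_{L^2(B_1)}|x|^2$ (with $|h(0)|$ small via $\bar f(0)=0$), together with the sharp height bound \eqref{heighatdensity}, we arrive at
\[
\bE(V,\bB_\theta,\pi_*)\le C\theta^2\bE_1 + C\theta^{-m}\omega(\bE_1,\lambda)\bE_1 + C\theta^{-m}|\log\lambda|^{2-2/m}\lambda^{2/m}.
\]
Setting $\lambda=\bE_1^a$ for a suitable small $a>0$ and then $\theta$ small depending on $\delta$ absorbs the last two terms into $\theta^{2-2\delta}\bE_1$.

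\textbf{The main obstacle} is the harmonic approximation step: controlling the contribution of $B_1\setminus(K_\lambda\times\RR^n)$ to the first-variation identity. Here the hypothesis $\cH^m(\{\Theta<Q\}\cap\supp(V)\cap\bB_r)\le\epsilon\omega_m r^m$ plays a decisive role, since without it a thick layer of low-density support --- matching neither $f$ nor controlled in mass --- could spoil the PDE estimate. The $|\log\lambda|^{1-1/m}$ factor in the Lipschitz bound \eqref{Lipdcsaim}, a remnant of the analogous factor in the height bound \eqref{bvefdasdue}, is precisely what forces the loss $\delta>0$ in the exponent $2-2\delta$; removing it would require improving the $L^\infty$ height estimate, which is not expected to hold in high codimension.
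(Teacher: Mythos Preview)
Your overall iteration scheme matches the paper's, but there is a genuine gap in the one-step argument: you only control the average $\bar f$ close to a harmonic $h$, whereas bounding $\bE(V,\bB_\theta,\pi_*)$ (equivalently the $L^2$ height over $\pi_*$) requires each sheet of $f$ to be close to $h$, i.e.\ $\int_{B_\theta}\cG(f, Q\a{h})^2$ small. The missing term is the sheet separation $\int\cG(f, Q\a{\bar f})^2$, which your decomposition never addresses. The density hypothesis is precisely what kills it: by property~(iv) of Theorem~\ref{Lipapproxstrong}, if $x\in K_\lambda$ and $f(x)\ne Q\a{\bar f(x)}$ then each value of $f(x)$ sits at a point of $\supp(V)$ with $\Theta<Q$, so $\mathcal L^m(\{x\in K_\lambda: f(x)\ne Q\a{\bar f(x)}\})\le C\,\mathcal H^m(\{\Theta<Q\}\cap\supp(V)\cap\bC_1)\le C\epsilon$ (this is~\eqref{toharm3} in the paper). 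The paper packages this as hypothesis~\eqref{ass2} of its harmonic-approximation Lemma~\ref{harmapprox}, whose output is directly $\int\cG(f,Q\a{u})^2\le\rho$ for a single-valued harmonic $u$, not merely $\int|\bar f - u|^2\le\rho$. You place the density hypothesis at the wrong step: the contribution of $B_1\setminus K_\lambda$ to the first-variation identity is already handled by~\eqref{measureestimatem} alone and does not need it; low-density points on $K_\lambda$ \emph{are} on the graph of $f$ and do not spoil the PDE for $\bar f$ --- they spoil the passage from $\bar f$ back to the full varifold.

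Two further errors. First, the paper takes $\lambda$ to be a fixed geometric constant, not $\lambda=\bE_1^a$; your last error term $C\theta^{-m}|\log\lambda|^{2-2/m}\lambda^{2/m}$ is not multiplied by $\bE_1$, and with small $a$ it scales like $\bE_1^{2a/m}\gg\bE_1$, so it can never be absorbed into $\theta^{2-2\delta}\bE_1$ (while large $a$ destroys the measure estimate $\bE_1/\lambda$). Second, the $\delta$-loss has nothing to do with the logarithmic factor in~\eqref{Lipdcsaim} (a fixed number once $\lambda$ is): it is the usual artifact of De~Giorgi iteration, in which a one-step estimate $\bE(V,\bB_\theta)\le C\theta^2\bE_1$ with $C>1$ iterates to $r^{2-2\delta}$ for any $\delta>0$ by taking $\theta=\theta(\delta)$ small, but never to $r^2$.
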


From the above theorem we immediately conclude the $C^{1,1-\delta}$ rectifiability of $\supp (V)$.

\begin{cor}
    Let $V$ be stationary in $\bB_1$.  Then, for any $\delta\in (0,1)$, 
    \begin{equation}
        \lim_{r\searrow 0} r^{2-2\delta}\bE(V,\bB_r(x))=0\qquad\text{for $\|V\|$-a.e.\ $x\in \bB_1$}\,.
    \end{equation}
    As a consequence,  $V$ is $C^{1,1-\delta}$-countably $\mathcal{H}^m$-rectifiable for any $\delta\in (0,1)$.
\end{cor}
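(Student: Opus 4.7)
The plan is to apply Theorem~\ref{decay} at $\|V\|$-almost every point $x\in \bB_1$, after a translation and rescaling, and then to pass from the resulting pointwise excess decay to $C^{1,1-\delta}$-rectifiability via a standard Whitney-type argument.

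First, fix $\delta\in (0,1)$ and let $G\subseteq \supp(V)\cap \bB_1$ be the set of points $x$ at which (i) the density $\Theta(V,x)\eqdef Q(x)$ is a positive integer, (ii) $V$ has a unique tangent cone equal to $Q(x)\,\mathcal{H}^m\res T_x V$, and (iii) $x$ is a Lebesgue point for $\|V\|$ in the sense that $\|V\|(\{\Theta\neq Q(x)\}\cap \bB_r(x))=o(r^m)$ as $r\searrow 0$. By rectifiability of $\supp(V)$, Allard's monotonicity and compactness for stationary integer varifolds, and Lebesgue differentiation applied to the Radon measure $\|V\|$, we have $\|V\|(\bB_1\setminus G)=0$. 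For $x\in G$, I would verify the hypotheses of Theorem~\ref{decay} for the rescaled varifold $(\tau_{x,r_0})_\sharp V$, with $Q=Q(x)$, for some sufficiently small $r_0=r_0(x,\delta)$. The smallness of $\bE(V,\bB_{r_0}(x))$ and the condition $\|V\|(\bB_{r_0}(x))/(\omega_m r_0^m) < Q(x)+\epsilon$ follow from (i)--(ii) and the monotonicity formula, as soon as $r_0$ is taken small enough. The scale-invariant bound on the low-density set is more delicate since it must hold at \emph{every} scale $r\leq r_0$; here the key observation is that $\Theta\geq 1$ on $\supp(V)$ (item (a) recalled in the introduction), whence
\begin{equation*}
\mathcal{H}^m(\{\Theta<Q(x)\}\cap \supp(V)\cap \bB_r(x)) \leq \|V\|(\{\Theta\neq Q(x)\}\cap \bB_r(x)) = o(r^m),
\end{equation*}
and by (iii) this gives the required uniform smallness at all sufficiently small scales. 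Theorem~\ref{decay} then yields $\bE(V,\bB_r(x)) \leq C(\delta)(r/r_0)^{2-2\delta}\,\bE(V,\bB_{r_0}(x))$ for every $r\in(0,r_0)$, which implies the limit stated in the first display of the corollary.

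For the rectifiability conclusion, I would partition $G$ into countably many measurable pieces, stratified by the integer $Q(x)$ and by dyadic thresholds on $r_0(x)$ and $\bE(V,\bB_{r_0(x)}(x))$; on each piece the excess decay holds uniformly. A Campanato-type computation then shows that the optimal planes $\pi_r(x)$ at scale $r$ form a Cauchy net with rate $r^{1-\delta}$, converging to a unique tangent plane $\pi_x$ with $|\p_{\pi_r(x)}-\p_{\pi_x}|\leq C r^{1-\delta}$. A Whitney-extension argument applied to the map $x\mapsto (x,\pi_x)$ over each piece produces a $C^{1,1-\delta}$-submanifold containing a portion of $G$; exhausting $G$ by countably many such submanifolds gives the claimed rectifiability.

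The main obstacle is precisely the scale-uniformity of the low-density hypothesis in Theorem~\ref{decay}: this is exactly what the Lebesgue-point condition (iii) supplies, and it is where the upper semicontinuity of $\Theta$ together with the bound $\Theta\geq 1$ intervene crucially, by allowing one to estimate the $\mathcal{H}^m$-size of the low-density set by the $\|V\|$-size of the full set $\{\Theta\neq Q(x)\}$. Once this is in place, the excess decay is immediate and the Whitney-type construction is standard.
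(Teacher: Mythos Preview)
Your argument for the first display is correct and is essentially what the paper does: the paper simply writes ``standard density arguments (notice that for $\|V\|$-a.e.\ $x$, $\bE(V,\bB_r(x))\rightarrow 0$)'', and you have correctly unpacked this, including the key point that $\Theta\ge 1$ on $\supp(V)$ lets you dominate $\mathcal{H}^m(\{\Theta<Q(x)\}\cap\supp(V)\cap\bB_r(x))$ by $\|V\|(\{\Theta\ne Q(x)\}\cap\bB_r(x))$, so that the Lebesgue-point property (iii) gives the low-density hypothesis of Theorem~\ref{decay} uniformly in the scale.

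For the rectifiability conclusion the paper takes a different, shorter route: it feeds the excess decay into the height bound \eqref{heighatdensity} of Theorem~\ref{heightdue} (at a point of density $Q$ one has $\supp(V)\cap\bC_r\subseteq \pi_0\times \bar B_{C\bE_r^{1/2}}$), and then invokes a black-box $C^{1,\alpha}$-rectifiability criterion from \cite{DelNinO}. Your hands-on Whitney-extension approach is also viable, and in fact the paper mentions it as an alternative in the remark immediately following the corollary. One point worth making explicit in your sketch: to run Whitney extension you need not only the Cauchy property of the optimal planes $\pi_r(x)$ (which your Campanato computation gives) but also the positional estimate $\dist(y-x,\pi_x)\lesssim |x-y|^{2-\delta}$ for nearby $x,y\in G$; this second ingredient does not follow from excess decay alone and is exactly what \eqref{heighatdensity} provides. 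The paper's route buys brevity by outsourcing both steps to \cite{DelNinO}; your route is more self-contained but must invoke the height bound at this point.
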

\begin{proof}
The first part of the statement follows from Theorem \ref{decay}, leveraging on standard density arguments (notice that for $\|V\|$-a.e.\ $x$, $\bE(V,\bB_r(x))\rightarrow 0$ as $r\searrow 0$). Now, recalling \eqref{heighatdensity} of Theorem \ref{heightdue} (in scale-invariant form) we see that the second part of the statement follows by $C^{1,\alpha}$-rectifiability theorems, e.g.\ \cite[Corollary 1.6]{DelNinO}.  
\end{proof}			
We remark that it is also possible to prove the $C^{1,1-\delta}$-rectifiability  in a more direct way, relying on the Lipschitz approximation of Theorem \ref{Lipapprox}, standard arguments (with \eqref{heighatdensity} of  Theorem \ref{heightdue} in scale-invariant form), and the Whitney $C^{1,\alpha}$-extension theorem of \cite[page 177]{SteinBook}, c.f.\ the proof of \cite[Theorem 3.2]{DelellisAllard}.

\subsection{Catenoids} In this section we briefly discuss the optimality of the height bound when $m=2$ and $n=1$. It is folklore in the literature that, under these dimensional assumptions, the catenoids are the ``worst possible examples'': in this section we include the simple explicit computations for the reader's convenience. 

Let $V$ be the $2$-dimensional stationary varifold $(E, 1)$ where $E$ is a catenoid in $\RR^3$. More precisely $E$ is the surface of revolution given by the superposition of the graphs of the 	functions $\rho\mapsto\pm f(\rho)$, for $f(\rho)\defeq  \cosh^{-1}(\rho)$ and $\rho\ge 1$. 
We easily compute, in polar coordinates,
			\begin{equation}\label{dcdsssdass}
				\bE_R \defeq\bE(V, \bC_R) \defeq 2\frac{1}{\pi^2 R^2}2\pi\int_1^R \rho \bigg(2-2\frac{\sqrt{\rho^2-1}}{\rho}\bigg)\frac{\rho}{\sqrt{\rho^2-1}}\,\dd\rho\,,
			\end{equation}
			where the three factors in the integral are due to the integration in polar coordinates,  the tilt of $TV$ with respect to $\pi_0$, and the area factor, respectively. Hence,
			\begin{equation}\label{dscdsacsac}
				\frac{\bE_R}{\log(R)/R^2}\rightarrow d_1\qquad\text{as $R\rightarrow\infty$}\,,
			\end{equation}
			for a positive geometric constant $d_1$.
			Moreover, we can bound the scale-invariant height of $V$ in $\bC_R$,  for $R$ big enough (depending upon the geometric parameters),
			\begin{equation}
				2\frac{\cosh^{-1}(R)}{R}=2\frac{\log(R+\sqrt{R^2-1})}{R}=2\bigg(\frac{\log(R)}{R^2}\bigg)^{1/2}\frac{\log(R+\sqrt{R^2-1})}{\sqrt{\log(R)}}\,,
			\end{equation}
			so that 
			\begin{equation}
			\frac{2\cosh^{-1}(R)/R}{\sqrt{-\bE_R\log(\bE_R)}}\rightarrow  d_2\qquad\text{as $R\rightarrow\infty$}\,,
			\end{equation}
			for a positive geometric constant $d_2$. Hence, we see that in our case the height behaves like $\sqrt{\bE|\log(\bE)|}$.
             When $m\geq 3$, the height of the higher-dimensional catenoids can be easily shown to be comparable to $\mathbf{E}^{\frac{1}{m}}$ (the computations are straightforward and we omit them) and thus we do not know whether the logarithmic correction is really needed. 

    Concerning the Lipschitz approximation, it is well known to the experts that for stationary varifolds there is no $E^\beta$-Lipschitz approximation for which the error term in \eqref{measureestimatem} is superlinear with respect to $\bE$, as it happens for area-minimizing currents, c.f.\ \cite{DS1}.
	Indeed, assume $m=2$, $n=1$, and let $V$ be the $2$-dimensional catenoid of the example above. We wish to approximate $V$ with the graph of an $\bE_R^\beta$-Lipschitz $2$-valued function, for some $\beta\in (0,1/2)$. We can just set it equal to $\llbracket f\rrbracket + \llbracket - f\rrbracket$ outside the disk $B_{R_\beta}$, and then extend it to be constant inside the disk. Set
			\begin{equation}
				\frac{1}{\sqrt{R_\beta^2-1}}=\bE_R^\beta\,,
			\end{equation}
			and notice that, for $R$ big enough, $R_\beta<R$.
			Clearly the graph of $f$ misses the entire $\supp (V)\cap \bC_{R_\beta}$, namely the graph fails to include a region of $\supp (V)$ with surface area
			\begin{equation}
				\|V\|(\bC_{R_\beta})=2(2\pi)\int_1^{R_\beta} \rho \frac{\rho}{\sqrt{\rho^2-1}}\,\dd\rho\,,
			\end{equation}
			where again we used integration in polar coordinates.  Therefore, we can compute that the scale invariant surface of that portion of the varifold behaves like
			\begin{equation}
				\frac{\|V\|(\bC_{R_\beta})/R^2}{-\bE_R^{1-2\beta}/{\log(\bE_R)}}\rightarrow d_3\,,
			\end{equation}
			for a positive geometric constant $d_3$. Thus $f$ fails to cover a region with surface area comparable to $\frac{\bE^{1-2\beta}}{|\log(\bE)|}$. Analogous computations can be made in $m\geq 3$ for higher-dimensional catenoids.

Finally, for what concerns the issue of ``higher integrability'', we remark what follows. Concerning the $\bE_R^\beta$-Lipschitz $2$-valued approximation, we compute, for $p>2$, with analogous computations as for \eqref{dcdsssdass},
\begin{equation}
    \bigg(\frac{1}{R^2}\int_{B_R\setminus B_{R_{\beta}}} |D f|^p\bigg)^{1/p}=\bigg(2\frac{1}{\pi^2 R^2}2\pi\int_{R_\beta}^R \rho (\rho^2-1)^{-p/2}\frac{\rho}{\sqrt{\rho^2-1}}\,\dd\rho\bigg)^{1/p}\,.
\end{equation}
Then, using \eqref{dscdsacsac},  we see that 
\begin{equation}
    \bigg(\frac{1}{R^2}\int_{B_R\setminus B_{R_{\beta}}} |D f|^p\bigg)^{1/p}\frac{1}{R^{-2\frac{1+\beta(p-2)}{p}}\log(R)^{\frac{\beta(p-2)}{p}}}\rightarrow d_4\,,
\end{equation}
for some positive geometric constant $d_4$. Hence, we see that $\bigg(\frac{1}{R^2}\int_{B_R\setminus B_{R_{\beta}}} |D f|^p\bigg)^{1/p}$  grows faster than $\bE_R^{1/2}$, as $\beta<1/2$.

\subsection{Comparison with the existing literature}
The results of this paper have already been obtained in the literature: the novelty lies rather in some of the arguments, which we believe are transparent and easily readable. 

The height bound  of Theorem \ref{heightdue} has already been proved by Menne in \cite{MenneCVPDE}.  In fact Menne proved more general estimates, valid for varifolds with mean curvature bounded in $L^p$. Of course, an $L^\infty$ bound like the one of our theorem requires a sufficiently high $p$ (more precisely a $p$ high enough to ensure that $W^{2,p}$ embeds in $L^\infty$).
Moreover, Menne's statement bounds the width with $C(r,q)\bE_1^{1/q}$ in \eqref{bvefdasdue}, for a suitable constant $C(r,q)$. Our sharper statement is simply obtained by tracking the dependence of the constant $C$ on $q$ and then optimizing in $q$.

The very first Lipschitz approximation theorem with multifunctions was proved by Almgren in  \cite{Almgren00}. Almgren's approximation was revisited by Brakke \cite{Brakke}, where (a weaker statement) has been proved also for varifolds with mean curvature given by a measure. 

Finally, Brakke \cite{Brakke} proved the almost quadratic decay for the excess at almost every point for varifolds with square integrable mean curvature. Later, Menne in \cite{MenneJGA} (inspired by \cite{SchatzleJDG}) obtained the sharp bound $\limsup_{r\searrow 0}r^{-2}\bE(\B_r(x))<\infty$ for a.e.\ $x$, leading him to prove $C^2$-rectifiability. 

\subsection*{Acknowledgments}
Most of this work was carried out while C.B.\ was a PhD student at  Scuola Normale Superiore.
    This material is based upon work supported by the National Science Foundation under Grant No.\ DMS-1926686.
\section{Preliminaries}

In these preliminaries we fix a varifold $V$ and an integer $Q$ satisfying the assumptions of our statements.
We begin by recalling Allard's \emph{monotonicity formula} and \emph{isoperimetric inequality} (see \cite[Theorem 7.1]{AllardFirst}).

\begin{thm}[Monotonicity formula]
        For every $0<s<r<4$,
        \begin{equation}\label{monotformula}
        \frac{\|V\|(\bB_r)}{\omega_m r^m}- \frac{\|V\|(\bB_s)}{\omega_m s^m}=\int_{\bB_r\setminus \bB_s}\frac{| x^\perp|^2}{|x|^{m+2}}\,\dd\|V\|\ge 0\,.
        \end{equation}
    \end{thm}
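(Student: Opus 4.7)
My plan is to run the standard ``radial cutoff'' computation: apply stationarity to a suitable family of radial vector fields, rearrange into a derivative identity, and then send the cutoff to an indicator function.

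\medskip

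\textbf{Step 1 (the test vector field).} Pick a smooth, nonincreasing cutoff $\phi:[0,\infty)\to[0,1]$ with $\phi\equiv 1$ near $0$ and $\supp\phi\subseteq[0,1]$, and consider, for $\rho\in(0,4)$,
\[
X_\rho(x)\defeq \phi(|x|/\rho)\, x\,.
\]
Since $\phi(|x|/\rho)$ is compactly supported in $\bB_\rho\subseteq\bB_4$ and stationarity holds on $\bB_4$, we may compute $\delta V(X_\rho)=0$. A direct computation using $\nabla_{TV}|x|=\p_{TV}(x/|x|)$ gives
\[
\dv_{TV}X_\rho(x) \;=\; m\,\phi(|x|/\rho)\;+\;\frac{\phi'(|x|/\rho)}{\rho}\,\frac{|x|^2-|x^\perp|^2}{|x|}\,,
\]
because $\dv_{TV}(x)=m$ and $|\p_{TV}(x)|^2=|x|^2-|x^\perp|^2$.

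\medskip

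\textbf{Step 2 (the derivative identity).} Set $I(\rho)\defeq \int \phi(|x|/\rho)\,\dd\|V\|$. From $\int \dv_{TV}X_\rho\,\dd\|V\|=0$ we obtain
\[
m\,I(\rho)\;+\;\frac{1}{\rho}\int \phi'(|x|/\rho)\,|x|\,\dd\|V\|\;-\;\frac{1}{\rho}\int \phi'(|x|/\rho)\,\frac{|x^\perp|^2}{|x|}\,\dd\|V\|\;=\;0\,.
\]
Differentiating $\rho^{-m}I(\rho)$ in $\rho$ and substituting for $mI(\rho)$ gives, after the two $\rho^{-m-2}\int\phi'|x|$ terms cancel,
\[
\frac{\dd}{\dd\rho}\Big(\rho^{-m}I(\rho)\Big)\;=\;-\rho^{-m-2}\int \phi'(|x|/\rho)\,\frac{|x^\perp|^2}{|x|}\,\dd\|V\|\,.
\]
Since $\phi'\le 0$, this is already nonnegative, giving the monotonicity.

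\medskip

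\textbf{Step 3 (integration and limit).} Integrating in $\rho$ from $s$ to $r$ and interchanging the order of integration (Fubini), the right-hand side becomes
\[
\int \frac{|x^\perp|^2}{|x|}\,\psi_{s,r}(x)\,\dd\|V\|(x)\,,\qquad \psi_{s,r}(x)\defeq -\int_s^r \phi'(|x|/\rho)\,\rho^{-m-2}\,\dd\rho\,.
\]
Changing variables $u=|x|/\rho$,
\[
\psi_{s,r}(x)\;=\;-|x|^{-m-1}\int_{|x|/r}^{|x|/s}\phi'(u)\,u^m\,\dd u\,.
\]
Now take a sequence $\phi_\eps\searrow \mathbf{1}_{[0,1)}$. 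Then $\phi_\eps'$ converges weakly to $-\delta_{\{u=1\}}$, so $\psi_{s,r}(x)\to |x|^{-m-1}\mathbf{1}_{\{s<|x|<r\}}$; on the left, $\rho^{-m}I_\eps(\rho)\to \rho^{-m}\|V\|(\bB_\rho)$ at $\rho=r,s$ (here we use that $\|V\|(\de\bB_r)=0$ for a.e.\ $r$, and then extend to all $r,s$ by choosing good radii or by monotonicity). Collecting the limits yields
\[
\frac{\|V\|(\bB_r)}{r^m}-\frac{\|V\|(\bB_s)}{s^m}\;=\;\int_{\bB_r\setminus\bB_s}\frac{|x^\perp|^2}{|x|^{m+2}}\,\dd\|V\|\,,
\]
and dividing by $\omega_m$ gives \eqref{monotformula}.

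\medskip

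\textbf{Main obstacle.} The computation itself is routine; the only subtle point is the passage $\phi_\eps\to\mathbf{1}_{[0,1)}$ in Step~3, where one must justify both the Fubini swap uniformly in $\eps$ and the pointwise/dominated convergence of $\psi_{s,r}^\eps$ to the sharp cutoff. This is handled by keeping the cutoffs monotone, using Lebesgue dominated convergence against the finite measure $\frac{|x^\perp|^2}{|x|^{m+1}}\,\dd\|V\|$ on $\bB_r$, and appealing to the upper semicontinuity of $\rho\mapsto \|V\|(\bB_\rho)$ if $\rho=s$ or $r$ happens to be a non-generic radius.
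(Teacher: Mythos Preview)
The paper does not supply its own proof of the monotonicity formula: it is stated in the Preliminaries with a citation to Allard's original paper \cite{AllardFirst} and then used as a black box. Your argument is the standard radial-vector-field derivation (essentially the one in \cite{AllardFirst} and \cite{Simon}) and is correct; the computation in Steps~1--2 is accurate, and the limiting procedure in Step~3 is justified as you indicate, since for $\phi_\eps$ with $\phi_\eps'$ supported in $[1-\eps,1]$ the function $\psi_{s,r}^\eps$ is supported in an annulus bounded away from the origin, so dominated convergence applies without difficulty. There is nothing in the paper to compare against.
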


    \begin{thm}[Isoperimetric inequality]
       If $\varphi\in C_c^1(\bC_4)$ is non-negative,
\begin{equation}\label{isoperimetric}
	\int_{\{\varphi\ge 1\}}\varphi\,\dd\|V\|\le C_0 \bigg(\int\varphi\,\dd\|V\|\bigg)^{1/m}\int |\nabla_{T V}\varphi|\,\dd\|V\|\,.
\end{equation}
    \end{thm}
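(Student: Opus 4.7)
The plan is to derive \eqref{isoperimetric} from the classical Michael--Simon--Sobolev inequality for stationary integral varifolds, which asserts that for every non-negative $\varphi \in C_c^1(\bC_4)$ one has
\begin{equation}\label{pp:ms}
\left(\int \varphi^{m/(m-1)}\,\dd\|V\|\right)^{(m-1)/m} \leq C_m \int |\nabla_{T V} \varphi|\,\dd\|V\|.
\end{equation}
I would treat \eqref{pp:ms} as a black box: in Allard's original treatment \cite{AllardFirst} and in Simon's book, it is obtained by feeding the first variation identity with vector fields of the form $y \mapsto \psi(y)\int K(y,x)\,\dd\|V\|(x)$ for a suitable radial kernel $K$, and combining the result with the monotonicity formula \eqref{monotformula}; a considerably more direct ABP/optimal-transport proof has been given recently by Brendle.

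Granting \eqref{pp:ms}, the passage to \eqref{isoperimetric} is essentially two lines. First, H\"older's inequality with conjugate exponents $m/(m-1)$ and $m$, applied to the pair $\varphi\,\mathbf{1}_{\{\varphi\ge 1\}}$ and $\mathbf{1}_{\{\varphi\ge 1\}}$, gives
\begin{equation}
\int_{\{\varphi\ge 1\}}\varphi\,\dd\|V\| \leq \left(\int \varphi^{m/(m-1)}\,\dd\|V\|\right)^{(m-1)/m}\,\|V\|(\{\varphi\ge 1\})^{1/m}.
\end{equation}
Second, since $\varphi\geq 1$ on the set $\{\varphi\ge 1\}$, Chebyshev's inequality yields the trivial bound $\|V\|(\{\varphi\ge 1\}) \leq \int \varphi\,\dd\|V\|$; inserting this together with \eqref{pp:ms} into the previous display produces \eqref{isoperimetric} with $C_0 = C_m$.

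The only genuine obstacle in this strategy is \eqref{pp:ms} itself. An alternative route would avoid \eqref{pp:ms} by means of the layer-cake identity
\[
\int_{\{\varphi\ge 1\}}\varphi\,\dd\|V\| = \|V\|(\{\varphi\ge 1\}) + \int_1^\infty \|V\|(\{\varphi\ge t\})\,\dd t,
\]
controlling each super-level set through a direct isoperimetric estimate for $\|V\|(\{\varphi\ge t\})$ derived from the monotonicity formula and a Vitali covering. However, this essentially reshuffles the ingredients needed for a direct proof of \eqref{pp:ms} and I would not expect it to be shorter; hence I favor the two-line H\"older argument above.
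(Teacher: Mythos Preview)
Your derivation is correct: H\"older with exponents $m/(m-1)$ and $m$ followed by the Chebyshev bound $\|V\|(\{\varphi\ge 1\})\le\int\varphi\,\dd\|V\|$ does reduce \eqref{isoperimetric} to the Michael--Simon Sobolev inequality \eqref{pp:ms}, and the latter is indeed available for stationary integral varifolds.

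There is, however, nothing to compare against: the paper does not prove this statement. It is listed among the preliminaries and simply attributed to Allard \cite[Theorem 7.1]{AllardFirst}, alongside the monotonicity formula. In Allard's original treatment the inequality is obtained directly (without passing through the Sobolev form \eqref{pp:ms}) by testing the first variation with radial vector fields and iterating a differential inequality for $r\mapsto\int_{\bB_r}\varphi\,\dd\|V\|$; your route via Michael--Simon is a legitimate and well-known alternative, arguably cleaner once one is willing to quote \eqref{pp:ms} as a black box.
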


Next we state and prove an elementary lemma, which shows that, if the excess is suitably small, then the varifold (approximately) covers the reference plane with a constant multiplicity, at definite scales.
    \begin{lem}\label{vfedacssc}
	Let $0<r_1<r_2<1$ and let $\eta\in(0,1/2)$. Assume that $\bE_1$ is small enough (depending upon $r_1,r_2,\eta$, and the geometric parameters), and that $\frac{\|V\|(\bC_1)}{\omega_m}\le Q+\frac{3}{4}$. Then, there exists $Q'\in \{0,\dots,Q\}$ such that the following holds for every $x\in \bC_{1}$ and $r >r_1$ with $\bC_r(x)\subseteq \bC_{r_2}$:
	\begin{equation}
		\frac{\|V\|(\bC_{r}(x))}{\omega_m r^m}\in (Q'-\eta,Q'+\eta)\,.
	\end{equation}
\end{lem}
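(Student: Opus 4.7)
The plan is to argue by contradiction using Allard's compactness. If the lemma failed for given $r_1, r_2, \eta$, one could extract a sequence of integer rectifiable stationary varifolds $V_k$ in $\bC_1$ with $\bE_1(V_k) \to 0$ and $\|V_k\|(\bC_1)/\omega_m \le Q + 3/4$, such that for every $k$ and every $Q' \in \{0, \ldots, Q\}$ there exists an admissible cylinder $\bC_{r^k_{Q'}}(x^k_{Q'}) \subseteq \bC_{r_2}$ with $r^k_{Q'} > r_1$ whose mass ratio lies outside $(Q'-\eta, Q'+\eta)$.

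Next I would identify the limit. By Allard's compactness, after passing to a subsequence, $V_k$ converges as varifolds to a stationary integer rectifiable $V_\infty$. Vanishing of the excess forces the approximate tangent of $V_\infty$ to equal $\pi_0$ at $\|V_\infty\|$-a.e.\ point, hence $\supp(V_\infty) \subseteq \pi_0$. Allard's constancy theorem then yields
\[
V_\infty = Q_\infty\, \mathcal{H}^m \res (\pi_0 \cap \bC_1)
\]
for some non-negative integer $Q_\infty$, and the mass bound $\|V_\infty\|(\bC_1) \le \liminf_k \|V_k\|(\bC_1) \le (Q+3/4)\omega_m$ forces $Q_\infty \in \{0, 1, \ldots, Q\}$.

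I would then establish uniform convergence of the mass ratios on the compact parameter set $\mathcal{K} \defeq \{(x,r) : r \ge r_1,\; \bar\bC_r(x) \subseteq \bar\bC_{r_2}\}$:
\[
\frac{\|V_k\|(\bC_r(x))}{\omega_m r^m} \longrightarrow Q_\infty \qquad \text{uniformly on } \mathcal{K}.
\]
Pointwise convergence follows immediately from the weak-$*$ convergence $\|V_k\| \to \|V_\infty\|$, since $\|V_\infty\|$ assigns zero mass to the boundary of any cylinder (it is a constant multiple of $\mathcal{H}^m \res \pi_0$). To upgrade to uniform convergence, I would cover $\mathcal{K}$ by finitely many small neighborhoods on which two-sided inclusions $\bC_{r_0-\delta}(x_0) \subseteq \bC_r(x) \subseteq \bC_{r_0+\delta}(x_0)$ hold, reducing the problem to the pointwise convergence at a finite set of representative cylinders. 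Choosing $k$ large enough that the uniform deviation is less than $\eta$ then contradicts the choice of $(x^k_{Q_\infty}, r^k_{Q_\infty})$.

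The main technical point is the uniform upgrade of the pointwise convergence; this hinges on the ``flatness'' of the limit $V_\infty$ (no mass concentrated on cylinder boundaries), which is automatic here since $\|V_\infty\|$ is absolutely continuous with respect to $\mathcal{H}^m \res \pi_0$. The remaining ingredients—Allard's compactness and constancy—are standard.
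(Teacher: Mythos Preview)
Your overall strategy---contradiction, Allard compactness, then constancy---matches the paper's. There are, however, two issues.

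The main gap is your claim that pointwise convergence $\|V_k\|(\bC_r(x)) \to \|V_\infty\|(\bC_r(x))$ follows from weak-$*$ convergence together with $\|V_\infty\|(\partial\bC_r(x)) = 0$. This Portmanteau step requires the set in question to be relatively compact in the ambient open set, but the cylinders $\bC_r(x) = B_r(x) \times \RR^n$ are unbounded in the vertical direction and hence not relatively compact in $\bC_1$. Mass can escape to vertical infinity: for instance, take $V_k$ to be the sum of the two unit-density disks $B_1 \times \{0\}$ and $B_1 \times \{k\, e_{m+1}\}$. Then $\bE_1(V_k) = 0$ and every admissible sub-cylinder has mass ratio exactly $2$, yet the varifold limit is the single disk with $Q_\infty = 1$, so your claimed convergence fails. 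The paper addresses precisely this by a preliminary reduction via the monotonicity formula: one first shows that $\supp(V)\cap\bC_{r_3}$ lies in a uniformly bounded number of horizontal slabs of uniformly bounded width, and thereby reduces to the case $\supp(V) \subseteq \RR^m \times B_{C_0}(0)$. Once the support is uniformly vertically confined, the cylinders intersected with the support become relatively compact in $\bC_1$ and your argument goes through verbatim.

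A minor second point: vanishing excess only gives that the approximate tangent of $V_\infty$ equals $\pi_0$ almost everywhere; it does \emph{not} force $\supp(V_\infty) \subseteq \pi_0$. The support is in general a finite union of affine planes parallel to $\pi_0$ (the paper deduces this from translation invariance, testing stationarity with $X = \varphi e$ for $e$ parallel to $\pi_0$, and then invokes constancy on each plane). This does not damage the rest of your outline---the mass ratio in any cylinder is still the constant integer $Q_\infty$ equal to the sum of the multiplicities---but the inference as written is incorrect.
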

\begin{proof} 
	Using the monotonicity formula we can reduce to the case in which  $\supp(V)\subseteq \RR^m\times B_{C_0}(0)$: if we set $r_3=\frac{1+r_2}{2}$ we can see that there is a finite number $N$ and a constant $C_0$ (depending on $Q$ and the geometric parameters) such that, for some suitably chosen points $y_1, \ldots y_N\in \RR^n$, not necessarily distinct,  $\supp (V)\cap \bC_{r_3}\subseteq \bigcup_i \RR^m \times B_{C_0} (y_i)$.
    
	Assume therefore by contradiction that $\supp (V) \subseteq \RR^m \times B_{C_0} (0)$ but that our claim is false. Then there exists a sequence of integral  varifolds $V_i$  as in the assumption of the lemma, with  $\bE_1(V_i,\bC_1)\rightarrow 0$, but not satisfying the conclusion of the lemma. By the Compactness Theorem of integral varifolds (e.g.\ \cite{Simon}), we have that $V_i$ converges to the integral varifold $V_\infty$ (in the sense of varifolds), up to a non-relabelled subsequence. Now, we see that  $\bE(V_\infty,\bC_1)=0$.  We argue that this implies that $V_\infty$ is supported in the union of finitely many affine planes parallel to $\pi_0$. In fact we fix a constant vector $e$ which is parallel to $\pi_0$, a function $\varphi \in C^\infty_c (\bC_1)$, and test the stationarity of $V_\infty$ with the vector field $X=\varphi e$. From the formula for the first variation we conclude that 
    \begin{equation}
            \int \partial_e \varphi\,\dd\|V_\infty\| = 0\,.
    \end{equation}
    This means that the distributional derivative of the measure $\|V_\infty\|$ in any direction parallel to $\pi_0$ is in fact constant, namely that the varifold is translation invariant along all directions parallel to $\pi_0$. Therefore $\supp (V_\infty)$ is also translation invariant. Since the latter must be a rectifiable $m$-dimensional set with locally finite measure, we conclude that it is locally the union of finitely many planes parallel to $\pi_0$. On the other hand Allard's constancy theorem implies that $\Theta (V_\infty, \,\cdot\,)$ is a constant (and a positive integer) on each of these planes. Thus, for some non-negative integer $Q_\infty\le Q$, $\frac{\|V_\infty\|(\bC_r(x))}{\omega_m r^m}=Q_\infty$, for every $\bC_r(x)\subseteq\bC_1$. This easily implies a contradiction.
	\end{proof}
    \section{Proof of the main results}
	\subsection{First height bound}
    This is our first height bound, which is weaker than Theorem \ref{heightdue}. We are going to use it to obtain a first Lipschitz approximation, which is as well weaker than Theorem \ref{Lipapproxstrong}. However this first Lipschitz approximation will allow us to improve the height bound to the optimal one of Theorem \ref{heightdue} and hence to the improved Lipschitz approximation of Theorem \ref{Lipapproxstrong}. 
	\begin{thm}[First height bound]\label{height} Let $V$ be stationary in $\bC_1$.
		Let $ r\in (0,1)$.
		Assume that $\bE_1$ is small enough (depending upon $ r$ and the geometric parameters) and that $\frac{\|V\|(\bC_1)}{\omega_m}\le Q+\frac{1}{2}$. 
  Then there are a constant $C= C(r,Q,m,n)$ and points $y_1,\dots,y_Q\in \RR^n$ (not necessarily distinct) such that 
		\begin{equation}\label{bvefdas}
			\supp(V)\cap\bC_{ r}\subseteq \bigcup_{h=1,\dots,Q} \RR^m\times\bar B_{C \bE_1^{1/(2m)}}(y_h)\,.
		\end{equation}
  Moreover, if in addition $\Theta(0)=Q$, then 
\begin{equation}%\label{bvefdas}
			\supp(V)\cap\bC_{ r}\subseteq  \RR^m\times \bar B_{C\bE_1^{1/(2m)}}(0)\,.
		\end{equation}
\end{thm}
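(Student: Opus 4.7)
The proof would combine three tools: Lemma~\ref{vfedacssc} for structural information about the mass ratios, the monotonicity formula \eqref{monotformula} for density lower bounds, and the isoperimetric inequality \eqref{isoperimetric} to control the width of each sheet.

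First, I apply Lemma~\ref{vfedacssc} with $r<r_2<1$, small $r_1$, and $\eta<1/4$ (depending on $r$). For $\bE_1$ small enough this yields an integer $Q'\in\{0,\ldots,Q\}$ such that $\|V\|(\bC_\rho(x))/(\omega_m\rho^m)\in(Q'-\eta,Q'+\eta)$ for all $\bC_\rho(x)\subseteq\bC_{r_2}$ with $\rho\ge r_1$; this pins down the global ``sheet count''. In parallel, the monotonicity formula gives $\|V\|(\bB_\rho(x_0))\ge\omega_m\rho^m$ for every $x_0\in\supp(V)\cap\bC_r$ and admissible $\rho$.

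The analytic heart is then to apply \eqref{isoperimetric} to a cut-off of the form $\varphi(x)=\psi(|p_{\pi_0}(x)|)\,f(H(x))^2$, where $H(x)=\dist(p_{\pi_0^\perp}(x),Y)$ measures the vertical distance to a finite reference set $Y\subseteq\R^n$, $f(s)=(s-T)_+$ truncates below the target height $T$, and $\psi$ is a spatial cut-off supported in $B_{r_2}$. Computing $|\nabla_{TV}\varphi|$ via the chain rule, using the pointwise bound $|p_{TV}\nabla H|\le|p_{TV}-p_{\pi_0}|$ (as $H$ depends only on $\pi_0^\perp$-coordinates and $\pi_0$-directions annihilate $\nabla H$), and pairing with $\bE_1^{1/2}$ via Cauchy--Schwarz, one obtains a mass bound on the ``bad'' region $\{H>T\}\cap\bC_r$. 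Combined with the monotonicity lower bound (any $z$ with $H(z)>2T$ forces $\bB_{T/2}(z)\subseteq\{H>T\}$, contributing mass $\ge\omega_m(T/2)^m$) this yields $T\le C\bE_1^{1/(2m)}$.

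The reference set $Y$ is built inductively: start with $Y=\{p_{\pi_0^\perp}(x_0)\}$ for any $x_0\in\supp(V)\cap\bC_r$, and as long as some $z\in\supp(V)\cap\bC_r$ has $H(z)>2C\bE_1^{1/(2m)}$, add $p_{\pi_0^\perp}(z)$ to $Y$. The iteration terminates in at most $Q'\le Q$ steps because, by Step 1, at scale $r_2$ the varifold is $L^2$-close to $Q'$ horizontal planes whose heights are separated by a definite amount (much larger than $\bE_1^{1/(2m)}$), so the $\bE_1^{1/(2m)}$-thick strips refine the $Q'$ large-scale sheets at most one-to-one. Missing $y_h$'s (if $|Y|<Q$) can simply be duplicated. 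For the density-$Q$ case, the monotonicity formula at $0$ combined with $\Theta(V,0)=Q$ and the mass upper bound forces $\int_{\bB_1}|x^\perp|^2/|x|^{m+2}\,d\|V\|$ to be small, which pins $0$ to every strip; one may then take $y_h=0$.

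\textbf{Main obstacle.} The key technical hurdle is the isoperimetric step: the spatial-cutoff contribution $\int(H-T)_+^2|\nabla\psi|\,d\|V\|$ must be absorbed by the excess term, and the $L^m$-geometry of \eqref{isoperimetric} (a $1/m$ exponent) must combine cleanly with the $L^2$-Cauchy--Schwarz step (a $1/2$ exponent) to produce the clean $1/(2m)$. A secondary difficulty is ensuring that the large-scale sheets are well-separated, which requires extracting quantitative information from the compactness argument underlying Lemma~\ref{vfedacssc}.
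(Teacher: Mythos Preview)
Your overall architecture is right, but the isoperimetric step as you describe it has a genuine gap. With $\varphi=\psi(|\p_{\pi_0}x|)\,(H-T)_+^2$, the term $\int (H-T)_+^2\,|\nabla_{TV}\psi|\,d\|V\|$ is \emph{not} controlled by the excess: $\psi$ depends on horizontal coordinates, so $|\nabla_{TV}\psi|\sim|\nabla\psi|$ when $TV$ is close to $\pi_0$, and after the a priori reduction $\supp V\subseteq\RR^m\times B_{C_0}$ this term is of order $\|\nabla\psi\|_\infty\cdot\|V\|(\bC_1)=O(1)$. There is no way to absorb an $O(1)$ term into $\bE_1^{1/2}$; you need an additional mechanism.

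The paper supplies exactly this mechanism, but in a different way than you anticipate. It works \emph{one vertical coordinate $\tau=y_j$ at a time}, and takes the cutoff radial in all remaining $m+n-1$ directions $\zeta$. Then the two gradient contributions are (i) the $\tau$-cutoff, which is genuinely bounded by $\int|\p_{TV}-\p_{\pi_0}|$ and hence by $\bE^{1/2}/M$ after choosing good levels $s_0,t_0$ at separation $M$, and (ii) the $\zeta$-cutoff, which in the sharp limit becomes $\partial_r f(r,s_0,t_0)$ with $f(r,s,t)=\|V\|(\{|\zeta|<r,\ \tau\in(s,t)\})$. The isoperimetric inequality then reads $f^{(m-1)/m}\le C_0(\partial_r f+\bE^{1/2}/M)$, and a \emph{dichotomy} finishes the job: either $\partial_r f\le \bE^{1/2}/M$ at some radius, which combined with the monotonicity lower bound $f\ge C_0^{-1}M^m$ forces $M\le C_0\bE^{1/(2m)}$; or $\partial_r f$ dominates everywhere, so the ODE $f^{(m-1)/m}\le C_0\partial_r f$ integrates to $f(2,s_0,t_0)\ge C_0^{-1}$. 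This is how the $1/m$ and $1/2$ combine to $1/(2m)$---not by a direct product of exponents, but through the ODE/dichotomy. Your test function, with cutoff only in the $m$ horizontal directions, cannot produce such an ODE because the $\partial_r f$ would be a horizontal derivative and the comparison with the monotonicity lower bound breaks.

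Two smaller issues. First, your termination argument claims Lemma~\ref{vfedacssc} gives $Q'$ well-separated horizontal planes; it does not---it only gives that mass ratios are near an integer. In the paper, the bound $|Y|\le Q$ is obtained by observing that each strip is itself stationary, applying Lemma~\ref{vfedacssc} to it to get $Q'_h\ge 1$, and summing masses against the global bound $Q+\tfrac12$. Second, for the $\Theta(0)=Q$ addendum you invoke the full monotonicity identity $\int|x^\perp|^2/|x|^{m+2}d\|V\|$; that is what the paper uses for the \emph{sharper} exponent $1/2$ in Theorem~\ref{heightdue}, and it requires the Lipschitz approximation to get the mass ratio within $C\bE_1$ of $Q$. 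Here the cruder conclusion follows by mass counting: the strip through $0$ already has mass $\ge Q\omega_m r_2^m$ by monotonicity, so any second strip would push the total above $Q+\tfrac12$.
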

The height bound of Theorem \ref{height} follows easily from the following lemma: a slice of $\bC_2$ of height comparable to $\bE_2^{1/(2m)}$ carries a definite amount of mass of the stationary varifold. Notice that a naive application of the monotonicity formula gives a lower bound for the mass comparable to $\bE_2^{1/2}$: the point is that the lower bound is as if the varifold were actually contained in a slice of height comparable to $\bE_2^{1/(2m)}$.
	\begin{lem}\label{induction}
		Assume that  $\|V\|(\bC_2)\le  S$, that $\bE_2$ is small enough, depending upon $S$ and the geometric parameters, and that  $\supp(V)\cap B_{1}(0, \pi_0)\ne \emptyset$. Then, there exists a constant $C=C(S,Q,m,n)$ such that 
		\begin{equation}\label{e:un-po-di-massa}
			\|V\|\big(\big\{(x,y)\in\RR^{n+m}: |x|<2,|y_{j}|\leq C\bE_2^{1/(2m)}\big\}\big)\ge \frac{1}{C_0}\qquad\text{for every }j=1,\dots,n\,.
		\end{equation} 
	\end{lem}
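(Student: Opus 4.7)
The plan is to combine the monotonicity formula (yielding a lower bound on the mass of $V$ near $p_0$) with an isoperimetric--coarea decay estimate (yielding an upper bound on the mass outside a thin slab), so that the mass inside the slab is forced to be large.

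First I would reduce to the case $\supp(V)\subseteq\bC_2\cap(\R^m\times\bar B_{R_0})$ for some $R_0=R_0(S,m,n)$. This follows from \eqref{monotformula} (every $p\in\supp(V)\cap\bC_1$ carries mass $\ge\omega_m$ in its unit ambient ball) combined with the bound $\|V\|(\bC_2)\le S$ via a standard disjoint-balls covering, in the same spirit as the reduction appearing at the beginning of the proof of Theorem \ref{height}. Next, pick any $p_0\in\supp(V)\cap B_1(0,\pi_0)$; since $y(p_0)=0$, \eqref{monotformula} at $p_0$ yields $\|V\|(\bC_{5/3})\ge\|V\|(\bB_{1/2}(p_0))\ge c_0\defeq\omega_m\,2^{-m}$.

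The heart of the proof is a decay estimate for
\[M(h)\defeq \|V\|\bigl(\bC_{5/3}\cap\{|y_j|>h\}\bigr),\]
a non-increasing function of $h\ge 0$. Applying \eqref{isoperimetric} to test functions of the form $\lambda\,\chi(x)\,\psi_h(y_j)$, with $\chi\in C_c^1(B_{7/4})$, $\chi\equiv 1$ on $B_{5/3}$, and $\psi_h$ a smooth approximation of $\mathbf{1}_{\{|y_j|>h\}}$, one recovers (in the smoothing limit) a classical isoperimetric-type inequality on level sets:
\[M(h)^{(m-1)/m}\le C\,J(h)+C\,M_a(h)\qquad\text{for a.e. }h>0,\]
where $J(h)\defeq\mathcal{H}^{m-1}(\supp V\cap\bC_{5/3}\cap\{|y_j|=h\})$ is counted with multiplicity and $M_a(h)\defeq\|V\|((\bC_{7/4}\setminus\bC_{5/3})\cap\{|y_j|>h\})$ is the annular contribution from the spatial cutoff. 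Integrating in $h$ and invoking the coarea formula on $V$, together with Cauchy--Schwarz and $|\nabla_{TV}y_j|^2\le|\p_{TV}-\p_{\pi_0}|^2$,
\[\int_0^{R_0}M(h)^{(m-1)/m}\dd h\le C(S)\,\bE_2^{1/2}+C\int_{\bC_{7/4}\setminus\bC_{5/3}}|y_j|\dd\|V\|.\]
The annular term is the delicate point; it is brought to order $\bE_2^{1/2}$ by running the argument at several dyadic radii in $[5/3,7/4]$ and averaging (alternatively, by a Sard-type selection of a good cutoff radius combined with the $y$-boundedness from the first step). Once $\int_0^{R_0}M(h)^{(m-1)/m}\dd h\le C(S,m,n)\bE_2^{1/2}$ is in hand, the monotonicity of $M$ gives $h\,M(h)^{(m-1)/m}\le C\bE_2^{1/2}$, whence
\[M(h)\le \bigl(C\bE_2^{1/2}/h\bigr)^{m/(m-1)}.\]
Choosing $h=C_*\bE_2^{1/(2m)}$ with $C_*=C_*(S,Q,m,n)$ large enough forces $M(h)\le c_0/4$ for $\bE_2$ small, and the same bound holds symmetrically for $\{y_j<-h\}$. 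Combined with $\|V\|(\bC_{5/3})\ge c_0$,
\[\|V\|\bigl(\bC_2\cap\{|y_j|\le C_*\bE_2^{1/(2m)}\}\bigr)\ge\|V\|(\bC_{5/3})-2M(h)\ge c_0/2\ge 1/C_0.\]

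The main obstacle is the control of the annular contribution $\int M_a(h)\dd h$, which is a priori $O(1)$ rather than $O(\bE_2^{1/2})$ and would destroy the decay if not handled carefully. Taming it via a dyadic or Sard-type choice of cutoff radius -- while simultaneously keeping the reduction to bounded $y$-extent consistent -- is the technically most delicate bookkeeping of the argument.
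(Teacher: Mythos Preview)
Your plan reverses the roles that the paper's argument assigns to the two slicing variables, and this reversal is fatal rather than merely delicate. Concretely: your target estimate is that the mass \emph{outside} the slab, $M(h)=\|V\|(\bC_{5/3}\cap\{|y_j|>h\})$, becomes small (say $\le c_0/4$) once $h\sim\bE_2^{1/(2m)}$. But this is false. Take $V$ to be two flat parallel sheets at heights $y_j=0$ and $y_j=1$ (so $\bE_2=0$, $\supp V\cap B_1(0,\pi_0)\neq\emptyset$, $\|V\|(\bC_2)=2^{m+1}\omega_m$). Then $M(h)=\omega_m(5/3)^m$ for every $h\in(0,1)$, which is far larger than $c_0/4=\omega_m 2^{-m-2}$. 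The lemma's conclusion still holds (the lower sheet gives plenty of mass in $\{y_j=0\}$), but your mechanism for reaching it breaks down entirely. The same example kills the proposed fix for the annular term: for any cutoff radius $\rho\in[5/3,7/4]$, sharp or smeared, the radial boundary contribution picks up the full mass of the upper sheet in the annulus, which is of order one, not of order $\bE_2^{1/2}$. No Sard-type selection or dyadic averaging in $\rho$ can help, because the upper sheet is present at every radius.

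The paper's proof avoids this by swapping the roles of the two variables. It first uses a pigeonhole argument on the function $g(t)=\int_{\bC_2\cap\{\tau<t\}}|\p_{TV}-\p_{\pi_0}|\,\dd\|V\|$ to select slab boundaries $s_0\in(-2M,-M)$, $t_0\in(M,2M)$ with $g'(s_0)+g'(t_0)\le\sqrt{S}\,\bE_2^{1/2}/M$; this is where the smallness of $\nabla_{TV}y_j$ (controlled by the excess) is spent. With $s_0,t_0$ frozen, the isoperimetric inequality applied to cutoffs $\phi_r^\epsilon(|\zeta|)\phi_{s_0,t_0}^\epsilon(\tau)$ yields a differential inequality in the \emph{spatial} radius, $f(r,s_0,t_0)^{(m-1)/m}\le C_0(\partial_r f(r,s_0,t_0)+\sqrt{S}\,\bE_2^{1/2}/M)$, for the mass \emph{inside} the slab. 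One then argues by dichotomy: either $\partial_r f$ is somewhere dominated by the error term, which combined with the monotonicity lower bound $f\ge C_0^{-1}M^m$ forces $M\le C(S)\bE_2^{1/(2m)}$; or $\partial_r f$ dominates, in which case integrating the ODE from $r=3/2$ to $r=2$ gives $f(2,s_0,t_0)\ge 1/C_0$ directly. Either way one bounds the mass inside the slab from below, never the mass outside from above. In the two-sheet example the paper's argument simply sees the lower sheet and ignores the upper one, exactly as it should.
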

\begin{proof}  First of all, without loss of generality we can prove the claim for $\bE_2>0$. In fact, if $\bE_2=0$, we can argue as in Lemma \ref{vfedacssc} to conclude that $V$ is supported in a finite union of disks parallel to the basis of the cylinder and over each of them the density $\Theta$ is a constant positive integer. But because $\supp (V)\cap B_1 (0, \pi_0)\neq \emptyset$, one such disk must be $B_2 (0, \pi_0)$.
	We assume $j=n+m$  for definiteness and we take coordinates $(\zeta,\tau)\in \RR^{n+m-1}\times \RR$.
Define 
\begin{align}
	g(t)&\defeq \int_{\bC_2 \cap \{\tau<t\}}|\p_{T V}-\p_{\pi_0}|\,\dd\|V\|\,,\\
	f(r,s,t)&\defeq \|V\|(\{|\zeta|< r,\tau\in (s,t)\})\,.
\end{align}

Take $M\in(0,1)$  to be fixed later, in particular, we will set it as $C(S)\bE_2^{1/(2m)}$ in order to have that \eqref{tofail} necessarily fails. Notice that, being $g$ monotonic, we have  
\begin{equation}
	\int_{-2M}^{-M} g' +	\int_{M}^{2M} g' \le 
	 \int_{\bC_2 }|\p_{T V}-\p_{\pi_0}|\,\dd\|V\|\le \|V\|(\bC_2)^{1/2}\bE_2^{1/2} \le \sqrt{S} \bE^{1/2}_2\,,
\end{equation}
where we also used Holder's inequality. Therefore, we can fix $s_0\in (-2M,-M), t_0\in (M,2M)$ points of differentiability of $g$ such that
\begin{equation}
	g'(s_0)+g'(t_0)\le \frac{\sqrt{S} \bE_2^{1/2}}{M}\,,
\end{equation}
we assume that also $\|V\|(\{|\zeta|<2,\tau\in\{s_0,t_0\}\})=0$.

Now consider, for $s<t$ (which will be set as $s_0<t_0$), $r>0$, and $\epsilon\in (0,r)$,	$\varphi^\epsilon_{r,s,t}(\zeta,\tau)\defeq\phi^\epsilon_r(|\zeta|)\phi^\epsilon_{s,t}(\tau)$, where
$\phi^\epsilon_r$ and $\phi^\epsilon_{s,t}(\tau)$ are smooth and  $2/\epsilon$-Lipschitz,
\begin{align}
\phi^\epsilon_r(|\zeta|)=1\qquad&\text{for $|\zeta|<r$}\,,\\
	\phi^\epsilon_r(|\zeta|)=0\qquad&\text{for $|\zeta|>r+\epsilon$}\,,\\
	\phi^\epsilon_{s,t}(\tau)=1\qquad&\text{for $\tau \in (s+\epsilon,t-\epsilon)$}\,,\\
	\phi^\epsilon_{s,t}(\tau)=0\qquad&\text{for $\tau\ge t$ or $\tau\le s$}\,.
\end{align}
Plugging this choice into the isoperimetric inequality \eqref{isoperimetric}, we obtain
\begin{equation}
    \begin{split}
	f(r,s+\epsilon,t-\epsilon)\le C_0 f(r+\epsilon,s,t)^{1/m}\bigg(\frac{2}{\epsilon}\|V\|& (\{|\zeta|\in (r,r+\epsilon), \tau\in (s,t)\})\\&+ \frac{2}{\epsilon}\int_{\bC_{2}\cap \{\tau\in (s,s+\epsilon)\}} |\p_{TV}-\p_{\pi_0}|\,\dd\|V\|\\
	&+ \frac{2}{\epsilon}\int_{\bC_{2}\cap \{\tau\in (t-\epsilon,t)\}} |\p_{TV}-\p_{\pi_0}|\,\dd\|V\|
	\bigg)
        \end{split}
\end{equation}
as $\nabla_{TV} \phi_{s,t}^\epsilon(\tau)=(\partial_\tau \phi_{s,t}^\epsilon) \p_{TV}(e_{n+m})$ and $\p_{\pi_0}(e_{n+m})=0$.
For a.e.\ $r$  we can let $\epsilon\searrow 0$ and obtain that
\begin{equation}\label{bgvfads}
	f(r,s_0,t_0)^{(m-1)/m}\le C_0 \big( \partial_r f(r,s_0,t_0)+g'(t)+g'(s)\big)\le C_0(\partial_r f(r,s_0,t_0)+\sqrt{S}\bE^{1/2}_2/M)\,.
\end{equation}
Assume that there exists $r_0\in (3/2,2)$ such that \eqref{bgvfads} holds and $\partial_r f(r_0,s_0,t_0)\le \sqrt{S }\bE_2^{1/2}/M$. In this case,
\begin{equation}
	f(r_0,s_0,t_0)^{(m-1)/m}\le C_0\sqrt{S} \frac{\bE_2^{1/2}}{M}\,.
\end{equation}
As $s_0<-M<M<t_0$ and $M\in (0,1)$, by the monotonicity formula centered at $p\in\supp(V)\cap B_1(0)$, we can bound
\begin{equation}
	f(r_0,s_0,t_0)\ge \frac{1}{C_0} M^{m}\,,
\end{equation}
hence
\begin{equation}\label{tofail}
	M\le C_0\sqrt{S} \bE_2^{1/(2m)}\eqdef \frac{1}{2}C(S)\bE_2^{1/(2m)}\,.
\end{equation}
Now we fix  $M$ as $C(S)\bE_2^{1/(2m)}$, so that the above can not happen. We assume that $ M\in (0,1)$, which we can do, requiring that $\bE_2$ is smaller than a quantity that depends upon $S$ and the geometric parameters.

Hence, for a.e.\ $r\in (3/2,2)$, $\sqrt{S }\bE_2^{1/2}/ M<  \partial_r f(r_0,s_0,t_0) $, so that,  for a.e.\ $r\in (3/2,2)$, $f(r,s_0,t_0)^{(m-1)/m}< C_0\partial_r f(r,s_0,t_0)$, by \eqref{bgvfads}. In particular, $f(r,s_0,t_0)>0$ for every $r\in (3/2,2)$, thus
\begin{equation}
	\frac{1}{C_0}\le \frac{1}{m}\frac{\partial_r f(r,s_0,t_0)}{f(r,s_0,t_0)^{(m-1)/m}}=\partial_r \big(f(r,s_0,t_0)^{1/m}\big)\qquad\text{for a.e.\ $r\in (3/2,2)$}\,,
\end{equation}
which implies \begin{equation}\label{bvfdas}
	\|V\|(|\zeta|<2, \tau\in (-2 M,2 M)) \ge f(2,s_0,t_0)\ge \frac{1}{C_0}\, .\qedhere
\end{equation}
\end{proof}

\begin{proof}[Proof of Theorem \ref{height}]
Clearly,  there is no loss of generality in assuming that $r\in (0,1)$ is bigger than a geometric constant, which has still to be determined. Also, if $\bE_1=0$, then, arguing as in the proof of Lemma \ref{vfedacssc}, we see that $V$ in $\bC_1$ is given by the union of finitely many planes parallel to $\pi_0$, so that the conclusion trivially follows. Hence, we assume $\bE_1>0$.

	Let $\bar\rho\defeq (1+r)/2$ and set $\bar s\defeq (1-\bar\rho)/2=(1-r)/4$. 
Cover $\bC_{\bar\rho}$ with $\bC^1,\dots, \bC^M$, where $M\le C(r)$,  such that for every $i$, $\bC^i=\bC_{\bar s}(x_i)$ with  $x_i\in B_{\bar\rho}$. Notice that for every $i$, $\bC_{2\bar s}(x_i)\subseteq\bC_1$, $\bE(V,\bC_{2\bar s}(x_i))\le C(r)\bE_1$, and $\frac{\|V\|(\bC_{2\bar s}(x_i)) }{\omega_m(2\bar s)^m }\le C( r)$.
	
Fix $i=1,\dots, M$. For fixed $j=1,\dots,n$, take $p=(\bar x,\bar y)\in \supp(V)\cap \bC^i$ provided that this set is not empty. By (the scale-invariant version of) Lemma \ref{induction},   if  $\bE_1$ is small enough,
	\begin{equation}
		\|V\|\big(\big\{(x,y)\in \RR^{n+m}:|x-x_i|<2\bar s, |y_j-\bar y_j|<  C(r)\bE_1^{1/(2m)} \big\}\big)\ge \frac{1}{C(r)}\,.
	\end{equation}
	Being $\|V\|(\bC_{2\bar s}(x_i))\le C_0$, this means that there exist finitely many $\bar y_j^1,\dots,\bar y_j^{N}$ (depending on $i$), where $N\le C(r)$,  such that 
	\begin{equation}
		\supp(V)\cap\bC^{i}\subseteq \big(\big\{(x,y)\in \RR^{n+m}: |y_j-\bar y_j^h|< C(r)\bE_1^{1/(2m)}\text{ for some $h=1,\dots,N$} \big\}\big)\,.
	\end{equation}
	Repeating the same argument for every $j$-th coordinate, with $j=1,\dots,n$, we see that there are $\bar y^1,\dots,\bar y^{N'}$ (depending on $i$), where $N'\le C(r)$, such that 
		\begin{equation}\label{mark}
		\supp(V)\cap\bC^i\subseteq \big(\big\{(x,y)\in \RR^{n+m}: |y-\bar y^h|< C(r)\bE_1^{1/(2m)}\text{ for some $h=1,\dots,N'$} \big\}\big)\,.
	\end{equation}
	Hence, applying this argument to every $\bC^i$, we see that there exist $\hat y^1,\dots,\hat y^{N''}$, where $N''\le C(r)$, such that 
		\begin{equation}
	\supp(V)\cap\bC_{\bar\rho}\subseteq \big(\big\{(x,y)\in \RR^{n+m}: |y-\hat y^h|< C(r)\bE_1^{1/(2m)}\text{ for some $h=1,\dots,N''$} \big\}\big)\,.
\end{equation}
	
Hence, there exist $S_1,\dots,S_{K}\subseteq\RR^n$ open and pairwise disjoint, where $K\le C(r)$, with  $\diam(S_h)\le C(r)\bE_1^{1/(2m)}$ and 
\begin{equation}
		\supp(V)\cap\bC_{\bar\rho}\subseteq \bigcup_h \RR^m\times S_h\,.
\end{equation}
	Set then, for every $h=1,\dots,K$, $V_h\defeq V\mres (\RR^m\times S_h)$ and notice that $V_h$ is stationary in $\bC_{\bar\rho}$. 
	Now take any $h=1,\dots,K$. Assume that $\exists p\in \supp(V_h)\cap\bC_{ r}$. 
	We apply Lemma \ref{vfedacssc}  (in scale-invariant form) with 
 \begin{equation}
     0<r_1=(\bar\rho- r)/4=(1-r)/8<r_2=(3\bar\rho+r)/4=(5r+3)/8<\bar\rho=(1+r)/2\quad\text{and}\quad\eta=1-\bar\rho=(1-r)/2\,,
 \end{equation} provided that $\bE_1$ is small enough, to see that 
	\begin{equation}
	    	\|V_h\|(\bC_{r_2})\ge (Q'-\eta )\omega_mr_2^m\quad\text{and}\quad 	\|V_h\|(\bC_{r_1})\le (Q'+\eta )\omega_mr_1^m\,,
	\end{equation}
	for some $Q'\in\NN$ that depends on $h$. By the monotonicity formula centered at $p$, we see that $\|V_h\|(\bC_{r_1}(p))\ge\|V_h\|(\bB_{r_1}(p))\ge \omega_m r_1^m$, so that  $1\le (Q'+(1-r)/2)$, hence, as $r>0$, $Q'\ge 1$. Therefore,
	\begin{equation}\label{btesfdsdvv}
		 \|V_h\|(\bC_{r_2})\ge ((1+r)/2)\omega_m ((5r+3)/8)^m\,.
	\end{equation}
	
	Summing \eqref{btesfdsdvv} over all $h$ such that $\supp(V_h)\cap\bC_{ r}\ne \emptyset$, we obtain that 
	\begin{equation}
	    	|\{h:\supp(V_h)\cap\bC_{ r}\ne \emptyset\}|((1+r)/2)\omega_m ((5r+3)/8)^m\le \|V\|(\bC_{r_2})\le \|V\|(\bC_1)\le \omega_m (Q+1/2)\,.
	\end{equation}
	Hence, if $r\in (0,1)$ is bigger than a geometric quantity, we have that $	|\{h:\supp(V_h)\cap\bC_{ r}\ne \emptyset\}|\le Q$, thus concluding the proof of \eqref{bvefdas}, as $\supp(V_h)\subseteq\RR^m\times S_h$, where $\diam(S_h)\le C( r)\bE_1^{1/(2m)}$.

    Now, assume that $\Theta(0)=Q$. As $0\in\supp(V)$, $0\in \supp(V_h)$ for some $h$, up to reordering assume that $0\in \supp(V_1)$.  By the monotonicity formula,
    \begin{equation}\label{bvrfdvsasss}
         \|V_1\|(\bC_{r_2})\ge \|V_1\|(\bB_{r_2})\ge Q\omega_m ((5r+3)/8)^m\,. 
    \end{equation}
    We  conclude the proof by showing that $\supp(V_h)\cap \bC_r=\emptyset$ for every $h=2,\dots,K$. By contradiction, take $h=2,\dots,K$ not satisfying the above. Summing \eqref{bvrfdvsasss} and \eqref{btesfdsdvv}, we obtain
    \begin{equation}
        Q\omega_m ((5r+3)/8)^m+((1+r)/2)\omega_m ((5r+3)/8)^m\le \|V\|(\bC_{r_2})\le \|V\|(\bC_1)\le \omega_m (Q+1/2)\,,
    \end{equation}
    which is a contradiction,  if $r\in (0,1)$ is bigger than a geometric quantity. 
\end{proof}

\subsection{First Lipschitz approximation}
Now we state and prove a slightly weaker form of Theorem \ref{Lipapproxstrong}, which is an easy consequence of the height bound of Theorem \ref{height} and a classical ``maximal function truncation'' argument.
\begin{thm}[First Lipschitz approximation]\label{Lipapprox} Let $V$ be stationary in $\bC_4$.
	Assume that $\bE_4$ is small enough (depending upon the geometric parameters), that $\frac{\|V\|(\bC_4)}{\omega_m 4^m}\le Q+1$, and that
	\begin{equation}\label{vfeadsc}
 \frac{\|V\|(\bC_{3})}{\omega_m 3^m}  \in(Q-\textstyle{\frac{1}{2}},Q+\textstyle{\frac{1}{2}})\,.
	\end{equation} 
	Set, for $\lambda \in (0,1)$ small enough (depending upon the geometric parameters),
	\begin{equation}\label{e:def-K}
		K_\lambda\defeq \{x\in B_1:\bmax\be (x)\le \lambda\}\,.
	\end{equation}
Then, there exists a $Q$-valued Lipschitz function $f:B_1\rightarrow\Iqs$ such that 
	\begin{equation}\label{Lipdcsa}
		f\text{ is } C_0\lambda^{1/(2m)}\text{-Lipschitz}\, .
	\end{equation}
In addition (i), (ii), (iv), and (v) in Theorem \ref{Lipapproxstrong} hold, while in place (iii) we claim the estimate 
\begin{equation}\label{measureestimate}
		|B_1\setminus K_\lambda|+\|V\|(\bC_1\setminus (K_\lambda\times\RR^n))\le C_0^q\frac{1}{\lambda^{q/2}}\int_{\bC_4}|\p_{T V}-\p_{\pi}|^{q}\,\dd\|V\|\qquad \text{for every } 2\leq q<\infty\, .
\end{equation}
\end{thm}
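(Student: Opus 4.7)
The plan is to build $f$ first on $K_\lambda$ using Theorem \ref{height}, derive the Lipschitz bound by applying the height bound at the scale joining two nearby points, and finally extend to $B_1$ by the standard $\Iq$-valued Lipschitz extension. Fix $x\in K_\lambda$. Every cylinder $\bC_s(y)\subseteq \bC_3$ with $x\in B_s(y)$ has $\bE(V,\bC_s(y))\le \lambda$, so Lemma \ref{vfedacssc} combined with \eqref{vfeadsc} singles out $Q$ as the unique integer for which the mass ratio in all such cylinders lies in $(Q-\tfrac{1}{2},Q+\tfrac{1}{2})$; this gives (i). The scale-invariant form of Theorem \ref{height} then traps $\supp(V)\cap \bC_{s/2}(y)$ in at most $Q$ horizontal strips of height $C_0\lambda^{1/(2m)}s$, which is (ii) after absorbing constants. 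Letting $s\downarrow 0$ over cylinders centered above $x$, the strips collapse to a finite set $\{p_1,\dots,p_N\}\subseteq \RR^n$ with $N\le Q$; I set $Q_i\defeq \Theta(x,p_i)$ and $f(x)\defeq \sum_i Q_i\a{p_i}$. The identity $\sum_i Q_i=Q$, which is (iv) by construction, follows from the monotonicity formula at each $(x,p_i)$ together with the mass ratio tending to $Q$ as $s\downarrow 0$.

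For \eqref{Lipdcsa}, fix $x,x'\in K_\lambda$, set $r\defeq 4|x-x'|$, and pick $y$ with $\{x,x'\}\subseteq B_{r/2}(y)$. Applying Theorem \ref{height} in $\bC_r(y)$ produces $\eta_1,\dots,\eta_Q\in\RR^n$ with $\supp(V)\cap \bC_{r/2}(y)\subseteq \bigcup_h \RR^m\times \bar B_{C_0\lambda^{1/(2m)}r}(\eta_h)$. Each point of $f(x)$ and of $f(x')$ lies in one of these strips. Once $\lambda$ is small enough that distinct strips are well-separated, each strip-restriction of $V$ is itself stationary in $\bC_{r/2}(y)$, and the slice construction of the previous paragraph applied inside each strip yields a multiplicity-preserving pairing between the points of $f(x)$ and $f(x')$ on common strips, with $|p_i-p'_{\sigma(i)}|\le 2C_0\lambda^{1/(2m)}r$. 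This gives $\cG(f(x),f(x'))\le C_0\lambda^{1/(2m)}|x-x'|$.

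The bound \eqref{measureestimate} reduces to a Hardy--Littlewood maximal inequality. Define the Borel measure $\mu$ on $\pi_0$ by $\mu(A)\defeq \int_{\bC_4\cap \p_{\pi_0}^{-1}(A)}|\p_{TV}-\p_{\pi_0}|^2\,\dd\|V\|$, so that $\omega_m\bmax\be(x)=\sup_{x\in B_s(y)\subseteq B_4}s^{-m}\mu(B_s(y))$ is the non-centered maximal function of $\mu$. For $q=2$, the weak-type $(1,1)$ inequality immediately gives $|B_1\setminus K_\lambda|\le C_0\lambda^{-1}\int_{\bC_4}|\p_{TV}-\p_{\pi_0}|^2\,\dd\|V\|$. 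For $q>2$, coarea and H\"older bound the density $d\mu/d\mathcal{L}^m(x)$ pointwise, up to a constant, by $\bigl(\int_{\p_{\pi_0}^{-1}(x)\cap \bC_4}|\p_{TV}-\p_{\pi_0}|^q\,\dd\|V\|\bigr)^{2/q}$ (the slice mass is uniformly bounded by Lemma \ref{vfedacssc}), and the strong-type $(q/2,q/2)$ maximal inequality produces the desired estimate. The mass estimate on $\bC_1\setminus(K_\lambda\times\RR^n)$ follows since the uniform mass ratio bound forces $(\p_{\pi_0})_{\sharp}\|V\|\lesssim \mathcal{L}^m$ on $B_1$, so this piece is controlled by $|B_1\setminus K_\lambda|$. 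Finally, I extend $f$ from $K_\lambda$ to $B_1$ via the $\Iq$-valued Lipschitz extension of \cite{DSq}, preserving the Lipschitz constant and supplying (v) via the $L^\infty$ bound on $f|_{K_\lambda}$ inherited from the height bound.

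The main obstacle I anticipate is the strip-pairing step: verifying that the multiplicities $Q_i$ at $x$ and $Q'_j$ at $x'$ sum correctly within each common strip, so as to realize the claimed $\cG$-distance. This reduces to showing that each strip traps an integer-multiplicity stationary subvarifold, which holds once the strips are disjoint and the interstitial regions contain no support; all remaining ingredients---the pointwise definition, the maximal-function bounds, and the $\Iq$-valued extension---are classical once Theorem \ref{height} is in hand.
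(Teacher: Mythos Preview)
Your overall architecture matches the paper's: define $f$ on $K_\lambda$ via the height bound at shrinking scales, prove the Lipschitz estimate by applying the height bound at scale $|x-x'|$ and pairing through common strips, then extend by \cite[Theorem 1.7]{DSq}. Two steps, however, have genuine gaps.

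\textbf{Multiplicities.} Setting $Q_i \defeq \Theta(x,p_i)$ does not define an element of $\Iqs$: the density $\Theta$ is integer-valued only $\mathcal{H}^m$-a.e., not at every $(x,p_i)$ with $x\in K_\lambda$, so you cannot guarantee $Q_i\in\mathbb{N}$ nor $\sum_i Q_i = Q$ pointwise (and your sentence about ``the mass ratio tending to $Q$'' is unproved --- you only know it lies in $(Q-\tfrac12,Q+\tfrac12)$). The paper defines the multiplicities the other way around: it applies Lemma \ref{vfedacssc} to each strip-restriction $V_h$ of $V$ in small cylinders above $x$, obtaining an integer $Q_h$ with $\|V_h\|(\bC_s(x))/(\omega_m s^m)\in (Q_h-\tfrac{1}{2Q},Q_h+\tfrac{1}{2Q})$ for all small $s$, and $\sum_h Q_h = Q$ follows because the total mass ratio is pinned near $Q$. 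Only afterwards does it verify $\Theta(x,p_h)=Q_h$ for \emph{almost every} $x\in K_\lambda$, which is (iv).

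\textbf{The estimate \eqref{measureestimate} for $q>2$ and the $\|V\|$ term.} Your coarea/slicing route does not go through: varifolds do not slice like currents, $(\p_{\pi_0})_\sharp\|V\|$ need not be absolutely continuous with respect to $\mathcal{L}^m$ (nothing rules out a small set where $T_zV$ is nearly vertical), and ``slice mass bounded by Lemma \ref{vfedacssc}'' is a statement about cylinder mass ratios at scales $\ge r_1$, not about $\mathcal{H}^0$-slices. The paper keeps everything at the cylinder level. For each $y\in B_1\setminus K_\lambda$ it selects a near-maximal bad cylinder $\bC_{r'}(y')\ni y$ with $\bE(V,\bC_{r'}(y'))>\lambda$; any larger cylinder has excess $\le\lambda$, so the iterated Lemma \ref{vfedacssc} gives $\|V\|(\bC_{5r'}(y'))\le C_0(r')^m$. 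H\"older \emph{inside that cylinder} then yields
\[
\lambda\,\omega_m(r')^m<\int_{\bC_{r'}(y')}|\p_{TV}-\p_{\pi_0}|^2\,\dd\|V\|\le\Big(\int_{\bC_{r'}(y')}|\p_{TV}-\p_{\pi_0}|^q\,\dd\|V\|\Big)^{2/q}\big(C_0(r')^m\big)^{1-2/q}\,,
\]
so $(r')^m\le C_0^{q/2}\lambda^{-q/2}\int_{\bC_{r'}(y')}|\p_{TV}-\p_{\pi_0}|^q\,\dd\|V\|$. A single Vitali covering then bounds both $|B_1\setminus K_\lambda|$ and $\|V\|(\bC_1\setminus(K_\lambda\times\RR^n))$ simultaneously --- the latter directly via $\|V\|(\bC_{5r'_j}(y'_j))\le C_0(r'_j)^m$ on the cover, not via any global comparison of $(\p_{\pi_0})_\sharp\|V\|$ with $\mathcal{L}^m$.
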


\begin{proof}
 First of all we can assume that $\bE_4>0$, otherwise we can argue as in Lemma \ref{vfedacssc} to conclude that $\supp (V)\cap \bC_3$ consists of a finite number of disks parallel to the basis of $\bC_3$ and over each of them the density of $V$ is a constant positive integer. In particular the varifold is itself the graph of a constant multivalued function.
    
	We start the proof with a preliminary observation, which we state for any stationary varifold, as we are going to apply it to $V$ and to some parts of it. 
	\\\textbf{Step 1.} Let $W$ be a stationary varifold in $\bC_{4}$, with $\frac{\|W\|(\bC_{{7/2}})}{\omega_m (7/2)^m}\le Q+\frac{1}{2}$. Assume that $\bC_{\rho}(x)$ intersects $\bC_1$ and that 
	\begin{equation}\label{vfeadsac}
		\bE(W,\bC_s(y))\le \mu \qquad\text{for every $\bC_{\rho}(x)\subseteq\bC_s(y)\subseteq\bC_{4}$}\,,
	\end{equation}
	for some $\mu\in (0,1)$.
	Then, if $\mu$ is small enough (depending upon the geometric parameters), which we will now on assume, there exists $Q'\in\{0,\dots,Q\}$ such that
	\begin{equation}\label{brvfd}
		\frac{\|W\|(\bC_s(y))}{\omega_m s^m}\in (Q'-1/(2Q),Q'+1/(2Q))\qquad\text{for every $\bC_\rho(x)\subseteq\bC_s(y)\subseteq\bC_{7/2}$}\,.
	\end{equation}
	Notice that, in particular, if $x\in B_1$ is such that \eqref{vfeadsac} holds for every $x\in \bC_s(y)\subseteq\bC_{4}$, then \eqref{brvfd} holds for every $x\in \bC_s(y)\subseteq\bC_{7/2}$.
	
	Indeed, take $\mu$ small enough (depending upon the geometric parameters) so that Lemma \ref{vfedacssc} applies with parameters $r_1=1/64$, $r_2=15/16$, and $\eta=1/(2Q)$, yielding $Q'\in\{0,\dots,Q\}$. Take any $\bC_\rho(x)\subseteq \bC_s(y)\subseteq \bC_{7/2}$ as above. If $s\ge 1/16$, then \eqref{brvfd} follows by the choice of $r_1$, indeed, we apply the scale-invariant version of Lemma \ref{vfedacssc}, for the radii $1/16<15/4<4$.  In the case $s<1/16$, we argue as follows. For $l\ge 0$, consider the cylinders $\bC^l(y)\defeq \bC_{1/2^{l+3}}(y)\subseteq\bC_{15/4}$. We claim that \eqref{brvfd} holds for $s=1/2^{l+3}$, for every $l\ge 1$ with $\bC_\rho(x)\subseteq\bC^l(y)$. The base case is observed above. For the inductive step at $l+1$,  we use (the scale-invariant form of) Lemma \ref{vfedacssc} on $ \bC^{l-1}(y)$, with the inductive assumption.
	For general $s$, there exists a unique $l'\in\NN$, $l'\ge 1$, with $s\in [1/2^{l'+4},1/2^{l'+3})$, then we use again Lemma \ref{vfedacssc} on $ \bC^{l'-1}(y)$ together with the claim for $s=1/2^{l'+3}$.
		\medskip\\\textbf{Step 2.}	Now we ask that  $\bE_4$ is small enough (depending upon the geometric parameters) so that $ \frac{\|V\|(\bC_{7/2})}{\omega_m(7/2)^m}\in (Q-1/2,Q+1/2)$ (using	Lemma \ref{vfedacssc} with the assumption \eqref{vfeadsc}). Moreover, we ask that $\lambda\le \mu$. Now, \textbf{Step 1} applies, notice that $Q'$ as in \eqref{brvfd} for $V$ is exactly $Q$, by \eqref{vfeadsc}. In particular, we have proved \eqref{vadfscas}. Actually, we have proved the claim for a slightly larger family of cylinders.
	\medskip\\\textbf{Step 3.}
	Now take $x\in K_\lambda$ and $B_{s}(y)\subseteq B_3$ with $x\in B_{s}(y)$ and consider $\bC_{7s/6}(y)\subseteq\bC_{7/2}$. We can assume that $\lambda$ is small enough (depending upon the geometric parameters) to apply   Theorem \ref{height} with $\bar r=6/7$.
	Hence, by  (the scale-invariant version of) Theorem \ref{height} applied to $\bC_{7s/6}(y)$ (recalling \textbf{Step 1} for the bound on the measure), there exist $S_1^{y,s},\dots,S^{y,s}_{N^{y,s}}$ open and pairwise disjoint, with $\diam(S_h^{y,s})\le D\lambda^{1/(2m)}s$ (with $D\le C_0$, but independent of $y$ and $s$) and $N^{y,s}\le Q$, such that 
	\begin{equation}\label{vbgrfsdc}
	\supp(V)\cap\bC_{s}(y) \subseteq\bigcup_{h=1,\dots,N^{y,s}} \RR^m\times S_h^{y,s}\,.
	\end{equation}
	We also assume that for every $h$, $	\supp(V)\cap\bC_{s}(y) \cap(\RR^m\times S_h^{y,s})\ne \emptyset$.
	Set then $V_h^{y,s}\defeq V\mres (B_{s}(y)\times S_{h}^{y,s})$, notice that $V_h^{y,s}$ is a stationary varifold in $\bC_s(y)$.
	Moreover, again by (the scale-invariant version of) \textbf{Step 1} (for both $V_h$ and $V$), we obtain that there exists $Q^{y,s}_h\in\{0,\dots,Q\}$ with   $\sum_h Q_h^{y,s}=Q$ and
	\begin{equation}\label{vfsedac}
		\frac{\|V_h^{y,s}\|(\bC_\rho(y'))}{\omega_m \rho^m}\in (Q_h^{y,s}-1/(2Q),Q_h^{y,s}+1/(2Q))\qquad\text{for every $\bC_\rho(y')\subseteq\bC_{7s/8}(y) $ intersecting $K_\lambda$}\,.
	\end{equation}
	
	Now fix $x\in K_\lambda$ and let $s\in (0,1)$. Denote by $S_1^{x,s},\dots,S_{N^{x,s}}^{x,s}$ the sets as for \eqref{vbgrfsdc} for $B_{s}(x)$, and choose points  $p_1^{x,s},\dots,p_{N^{x,s}}^{x,s}\in\RR^n$  with $p_h^{x,s}\in S^{x,s}_h$ and such that $\supp(V)\cap (B_{s}(x)\times\{p_h^{x,s}\})\ne\emptyset$. 
	Now notice that thanks to \eqref{vfsedac}, 
	\begin{equation}
		\dist_H\big(\{p_1^{x,s},\dots,p_{N^{x,s}}^{x,s}\},\{p_1^{x,\rho},\dots,p_{N^{x,\rho}}^{x,\rho}\}\big)\le 2D\lambda^{1/(2m)} s\qquad\text{for every }\rho\in (0,7s/8]\,,
	\end{equation}
	so that we have a limit set $\{p^x_1,\dots,p^x_{N^x}\}$, where $N^x\le Q$, and $(x,p_h^x)\in\supp(V)$ for every $h$.
	
	Now take $\bar s\in (0,1)$ small enough so that $B_{4D\lambda^{1/(2m)} \bar s}(p^x_h)$ are pairwise disjoint.  Take $s\in (0,\bar s)$, consider $S_1^{x,s},\dots,S_{N^{x,s}}^{x,s}$. Take $h$ and consider $V^{x,s}_h$, notice that there exists a set $T_h^{x,s}$ with $\supp(V_h)\subseteq  B_{s}(x)\times T_h^{x,s}$ and $\overline{T_h^{x,s}}\subseteq S_h^{x,s}$. By \eqref{vfsedac}, and exploiting $T_h^{x,s}$, we find a point  $(x,\bar p)\subseteq(\{x\}\times \overline{T_h^{x,s}})\cap\supp(V^{x,s}_h)\subseteq\supp(V)\cap S_h^{x,s}$, and this forces $\bar p=p^x_{h'}$ for some $h'$. We have thus seen that for every $h$, there exists a unique $h'$ (as $|p^x_{h'}-p^x_{h''}|\ge 8D\lambda^{1/(2m)}\bar s>\diam(S^{x,s}_h)$ for $h''\ne h'$) such that $p^x_{h'}\in S^{x,s}_h$. Conversely, for every $k$, there exists a unique $k'$ such that $p^x_{k}\in S^{x,s}_{k'}$, as $p^x_{k}\in\supp(V)$. Hence we have a one-to-one correspondence between the points  $\{p^x_1,\dots,p^x_{N^x}\}$ and the sets $S_1^{x,s},\dots,S_{N^{x,s}}^{x,s}$. Hence, to ${p^x_h}$, we associate $Q_{h'}^{x,s}$ (for ${p^x_h}\in S^{x,s}_{h'}$) as the relevant integer as in \eqref{vfsedac}. Notice that \eqref{vfsedac} implies that this choice is well posed, i.e.\ independent of $s$, and recall that $\sum_h Q_h^{x,s}=Q.$ Therefore, we can define $f(x)\defeq \sum_h Q^{x,s}_h\a{p_h^{x}}\in\Iqs$.  Notice also that the above argument implies $\supp(V)\cap(\{x\}\times\RR^n )= \bigcup_h (x,p_h^x)$. In particular,  \eqref{brvfeads}  follows from \eqref{vbgrfsdc}.
			\medskip\\\textbf{Step 4.}
			We prove that for every $x_1,x_2\in K_\lambda$, then $\cG(f(x_1),f(x_2))\le C_0\lambda^{1/(2m)}|x_1-x_2|$. This will prove	\eqref{Lipdcsa} for $f: K_\lambda\rightarrow \Iqs $, and hence, thanks to \cite[Theorem 1.7]{DSq}, will provide us with the sought $f$ satisfying 	\eqref{Lipdcsa}. Set $\bar r\defeq |x_1-x_2|$ and $\bar x\defeq (x_1+x_2)/2$. Clearly, $x_1,x_2\in B_{ 7\bar r/8}(\bar x)\subseteq B_{\bar r}(\bar x)\subseteq B_3$. 
			We use now \textbf{Step 3}, with the same notation. Consider  $p_h^{x_1}\subseteq S^{\bar x,\bar r}_{h'}$ for a unique $h'$. Therefore, if $s$ is small enough, $S_h^{x_1,s}\subseteq S_{h'}^{\bar x,\bar r}$.  We assume that $s$ is small enough so that this holds for every $h$, and also that $B_{4D\lambda^{1/(2m)}s}(p_h^{x_1})$ are pairwise disjoint.  Therefore, combining \eqref{vfsedac} both for $V^{x_1,s}_h$ and $V^{\bar x,\bar r}_{h'}$, we see that
			\begin{equation}
				\sum_{h:S_h^{x_1,s}\subseteq S_{h'}^{\bar x,\bar r}} Q_h^{x,s}=Q_{h'}^{\bar x,\bar r},
			\end{equation}
		or, alternatively, that \begin{equation}
							\sum_{h:p_h^{x_1}\in  S_{h'}^{\bar x,\bar r}} Q_h^{x_1}=Q_{h'}^{\bar x,\bar r}\,.
			\end{equation}
			The same consideration holds for $x_2$. Recalling that $\diam(S_{h'}^{\bar x,\bar r})\le D\lambda^{1/(2m)}\bar r$, we immediately obtain the sought bound.
		\medskip\\\textbf{Step 5.} We prove \eqref{measureestimate}.
		Take $y\in B_1\setminus K_\lambda$. Take $y\in \bC_{r'}(y')\subseteq\bC_4$ such that $\bE(V,\bC_{r'}(y'))>\lambda$ and
		\begin{equation}
			r'>1/2 \sup\{r'': y\in \bC_{r''}(y'')\subseteq\bC_4:\bE(V,\bC_{r''}(y''))> \lambda\}\,.
		\end{equation}
		If $\bC_{5r'}(y')\not\subseteq\bC_{7/2}$, then $r'\ge 5/12$. Hence, by
		\begin{equation}
			\lambda<\bE(V,\bC_{r'}(y'))\le \frac{\omega_m 4^m \bE_4}{\omega_m (r')^m}\,,
		\end{equation}
		we infer that $\lambda/\bE_4\le C_0$, hence \eqref{measureestimate} is trivial. Therefore we assume that for every $y\in B_1\setminus K_\lambda$, $y\in\bC_{5r'}(y')\subseteq\bC_4$, where we used the notation as above. This means that for every $\bC_{t}(z)$  (and there exists at least one such cylinder) with $\bC_{5r'}(y')\subseteq \bC_{t}(z)\subseteq \bC_4$, then $\bE(V, \bC_{t}(z))\le \lambda$. By \textbf{Step 1}, we have that 
		\begin{equation}
			\frac{\|V\|(\bC_{5r'}(y'))}{\omega_m (5r')^m}\le Q+1/2\,.
		\end{equation}
		In particular, for every $y\in B_1\setminus K_\lambda$, we can find $y\in \bC_{r'}(y')$ with $\bE(V,\bC_{r'}(y'))>\lambda$ and $\|V\|(\bC_{5r'}(y'))\le C_0 (r')^m$, so that, by Holder's inequality,
  \begin{equation}\label{bgfsdvcs}
      \lambda < \frac{1}{\omega_m (r')^m}\int_{\bC_{r'}(y')} |\p_{T V}-\p_{\pi_0}|^2\,\dd\|V\|\le \frac{1}{\omega_m (r')^m} \bigg(\int_{\bC_{r'}(y')} |\p_{T V}-\p_{\pi_0}|^{q}\,\dd\|V\|\bigg)^{2/q} (C_0 (r')^m)^{1-2/q}\,,
  \end{equation}
  which implies 
  \begin{equation}\label{bgfsdvcsa}
      (r')^m\le C_0^{q/2}\frac{1}{\lambda^{q/2}}\int_{\bC_{r'}(y')}|\p_{T V}-\p_{\pi_0}|^{q}\,\dd\|V\|\,.
    \end{equation}
  Hence, \eqref{measureestimate} follows from a standard covering argument.
		\medskip\\\textbf{Step 6.} We prove  \eqref{densdvacs}.  Up to removing from $K_\lambda$ a negligible subset, we can assume that for every $z\in (K_\lambda\times \RR^n)\cap \supp(V)$, $\Theta(z)\in\N$. Fix $z=(x,y)$ as in the statement.
        We are going to use the same notation as in \textbf{Step 3}. Take $s$ so small that $B_{4D\lambda^{1/(2m)} s}(p_h^x)$ are pairwise disjoint, fix  $h$ such that $p_h^x=y$. 
If $s'\in (0,7s/8)$, we have trivially
\begin{equation}
	\frac{\|V_h^{x,s}\|(\B_{s'}(z))}{\omega_m {(s')}^m} \le \frac{\|V_h^{x,s}\|(\bC_{s'}(x))}{\omega_m {(s')}^m}\,.
\end{equation} On the other hand, let $s'\in (0,7s/8)$ be so small so that $\sqrt{1+D^2\lambda^{1/m}}s'<7s/8$. We have by \textbf{Step 3} that $\supp(V_h^{x,s})\cap \bC_{s'}(x)\subseteq \RR^m\times S_{h'}^{x,s'}$, with $\diam(S^{x,s'}_{h'})\le D\lambda^{1/(2m)}s'$, hence
\begin{equation}
	\frac{\|V_h^{x,s}\|(\bC_{s'}(x))}{\omega_m {(s')}^m}\le  \frac{\|V_h^{x,s}\|\Big(\B_{\sqrt{1+D^2\lambda^{1/m}}s'}(z)\Big)}{\omega_m {(s')}^m}=\frac{\|V_h^{x,s}\|\Big(\B_{\sqrt{1+D^2\lambda^{1/m}}s'}(z)\Big)}{\omega_m {(\sqrt{1+D^2\lambda^{1/m}}s')}^m} (1+D^2\lambda^{1/m})^{m/2}\,.
\end{equation}
We can let now $s'\searrow 0$, recalling \eqref{vfsedac}, to infer that
\begin{equation}
\Theta(z)\le Q^{x}_h+1/(2Q)\quad\text{and}\quad	Q_h^x-1/(2Q)\le \Theta(z)(1+D^2\lambda^{1/m})^{m/2}\,.
\end{equation}
Recalling that  $\Theta(z)$ is an integer, we see that $\Theta(z)=Q_h^{x}$, if $\lambda$ is small enough, depending upon the geometric parameters.

Finally, \eqref{vefadcsxac} follows from the construction of $f$ in \textbf{Step 3} and \cite[Equation (1.8)]{DSq}.
%We prove now \eqref{vefadcsxac}. By the construction of $f$ in \textbf{Step 3} and \cite[Equation (1.8)]{DSq}, it is enough to prove that $\supp(V)\cap\bC_1\subseteq \RR^m\times B_{C_0\bE_4^{1/(2m)}}(0)$. This follows from arguments already used several times, we give anyway the argument, using the notation of  \textbf{Step 3}. Take $h$ such that $0\in \supp(V_h^{0,2})$, and assume that there exists $k\ne h$ with $\supp(V_k^{0,2})\cap\bC_1\ne \emptyset$. Now we reach a contradiction to \eqref{vadfscas} by the monotonicity formula, exploiting  \eqref{vfsedac}.
\end{proof}

For future reference, we record a couple of useful properties of the Lipschitz approximation.
\begin{prop}\label{lipen} Let $V$ be stationary in $\bC_4$.
    Assume that $\bE_4$ is small enough (depending upon the geometric parameters), that $\frac{\|V\|(\bC_4)}{\omega_m 4^m}\le Q+1$, and that
	\begin{equation}%\label{vfeadsc}
 \frac{\|V\|(\bC_{3})}{\omega_m 3^m}  \in(Q-1/2,Q+1/2).
	\end{equation} 
	Consider, for $\lambda\in (0,1)$ small enough, depending upon the geometric parameters,	the  $Q$-valued Lipschitz function $f:B_1\rightarrow\Iqs$ given by Theorem \ref{Lipapprox}.
 Then, there is a geometric constant $C_0$ such that the following estimates hold
 \begin{equation}\label{lipen1}
     \int_{B_1}|D f|^{q}\le C_0^{q}\frac{1}{\lambda^{q/2}}\int_{\bC_4} |\p_{\pi_0}-\p_{T_z V}|^{q}\,\dd\|V\| \qquad \text{for every } 2\leq q<\infty\, ,
 \end{equation}
 \begin{equation}\label{lipen2}
    \bigg| \int_{B_1}\sum_{i=1}^Q\nabla \varphi\,\cdot\,D f_i\bigg|\le C_0 \|\nabla\varphi\|_{L^\infty}\frac{\bE_4}{\lambda^{-1}} \qquad\text{for every $\varphi\in C_c^1(B_1)$}\, ,
 \end{equation}
 \begin{equation}\label{area}
     \Big|\|V\|(B\times \RR^n)- Q|B|\Big|\le C_0\int_{B\times\RR^n} |\p_{T V}-\p_{\pi}|^2\,\dd\|V\|\qquad\text{for any $B\subseteq K_\lambda$ Borel}
 \end{equation}
 ($K_\lambda$ is the set defined in \eqref{e:def-K}).
\end{prop}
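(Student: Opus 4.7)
The plan is to exploit the fundamental fact, from parts (ii) and (iv) of Theorem~\ref{Lipapprox}, that on $K_\lambda\times\RR^n$ the support of $V$ coincides exactly with the graph of $f$ and the multiplicities match the density. All three inequalities then follow from a good/bad decomposition: on the good piece one passes between $L^q$ norms of $Df$ and the tilt excess $|\p_{T V}-\p_{\pi_0}|^q$ via a standard graph computation, while on the bad piece one uses the Lipschitz bound $|Df|\le C_0\lambda^{1/(2m)}$ combined with the measure bounds from \eqref{measureestimate}.

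For \eqref{lipen1}, the key observation is that $|\p_{\gr L}-\p_{\pi_0}|=|L|+O(|L|^3)$ for any linear $L\colon\RR^m\to\RR^n$, and the area factor $\sqrt{\det(I+L^TL)}$ is uniformly close to $1$ when $\lambda$ is small, so that on each sheet $\int_{K_\lambda}|Df_i|^q\,\dd x\le C_0^q\int_{(K_\lambda\times\RR^n)\cap\supp(V)}|\p_{T V}-\p_{\pi_0}|^q\,\dd\|V\|$. On $B_1\setminus K_\lambda$ one combines the Lipschitz bound with \eqref{measureestimate}: the contribution is of the order $\lambda^{q/(2m)-q/2}\int_{\bC_4}|\p_{T V}-\p_{\pi_0}|^q\,\dd\|V\|$, which is dominated by the target since $\lambda\le 1$. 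Analogously, for \eqref{area}, since $B\subseteq K_\lambda$ the graph identity yields $\|V\|(B\times\RR^n)=\int_B\sum_i Q_i\sqrt{\det(I+Df_i^TDf_i)}\,\dd x$; expanding the area factor as $1+\tfrac12|Df_i|^2+O(|Df_i|^4)$ and absorbing the quartic remainder using $|Df_i|^2\le C_0\lambda^{1/m}$ produces \eqref{area}.

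The harder estimate is \eqref{lipen2}. The plan is to test the stationarity of $V$ with the vector field $X(x,y)=\varphi(x)\nu$ for a fixed unit vector $\nu\in\pi_0^\perp$: since $\nabla X$ has non-zero entries only in one row, the first variation formula collapses to $\int\sum_\beta(\p_{T V})_{\nu,\beta}\,\partial_\beta\varphi\,\dd\|V\|=0$. On the good region one expands $(\p_{T V})_{\nu,\beta}=\partial_\beta(f_i)_k+O(|Df_i|^3)$, where $k$ is the coordinate of $\nu$, together with $\dd\|V\|=(1+O(|Df_i|^2))\,\dd x$ on each sheet, rewriting the integral as $\int_{K_\lambda}\sum_i\nabla\varphi\cdot D(f_i)_k\,\dd x$ up to an error controlled by $\|\nabla\varphi\|_\infty\int_{K_\lambda}|Df|^3$. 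On the bad region I would apply Cauchy--Schwarz using the $\|V\|$-measure bound from \eqref{measureestimate}, and the discrepancy between the good-set integral and the $B_1$ integral is controlled by the Lipschitz bound on $Df$ together with the measure bound on $B_1\setminus K_\lambda$. The main obstacle is to combine these error pieces into the target right-hand side --- specifically by applying \eqref{lipen1} with $q=3$ together with the pointwise Lipschitz bound, which yields $\int_{K_\lambda}|Df|^3\le C_0\lambda^{3/(2m)}\bE_4$ and similar refinements for the other error terms.
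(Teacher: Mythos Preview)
Your approach is correct and essentially identical to the paper's: both identify $V$ with the graph varifold $\Gamma_f$ on $K_\lambda\times\RR^n$ via parts (ii) and (iv) of Theorem~\ref{Lipapprox}, test stationarity with $X=\varphi e_j$ for $e_j\in\pi_0^\perp$, and control the good/bad decomposition exactly as you describe (the paper makes the ``sheet-by-sheet'' reasoning rigorous through the measurable selection of \cite[Lemma~1.1]{DSq}, but this is just bookkeeping). One small correction: combining a single Lipschitz factor $|Df|\le C_0\lambda^{1/(2m)}$ with the Dirichlet bound $\int_{K_\lambda}|Df|^2\le C_0\bE_4$ (which holds without a $\lambda^{-1}$ loss on $K_\lambda$ itself) yields $\int_{K_\lambda}|Df|^3\le C_0\lambda^{1/(2m)}\bE_4$, not $\lambda^{3/(2m)}\bE_4$; this is still comfortably below the target $\lambda^{-1}\bE_4$, so the argument goes through unchanged.
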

\begin{proof}
We set $\Gamma_f$ to be the integral varifold given by the graph of $f$, with a slight abuse we will denote with $\Gamma_f$ both the graph of $f$ and the induced varifold. In order to define such object, one can either use the theory of \cite{DSq}, namely \cite[Definition 1.10]{DSq} to obtain the current $\bG_f$ and then look at the varifold naturally induced by such current, or use \cite[Lemma 1.1]{DSq} and build the varifold $\Gamma_f$ directly. We follow here the second approach. As we are going to use the claim of \cite[Lemma 1.1]{DSq} repeatedly, we recall its statement, in our context. Namely,  we have a sequence of pairwise disjoint Borel subsets of $B_1$, $(C_h)_h$, with $\mathcal{L}^m(B_1\setminus \bigcup_h C_h)=0$, such that 
\begin{enumerate}
	\item for every $h$, $f_{|C_h}=\sum_{i=1}^Q \a{f^h_i}$, with $f^h_i$ $C_0\lambda^{1/(2m)}$-Lipschitz on $C_h$ for every $i=1,\dots,Q$,
	\item for every $h$ and $i,i'\in \{1,\dots,Q\}$, either $f^h_i=f^h_{i'}$ on $C_h$, or $f^h_i\ne f^h_{i'}$ for every $x\in C_h$,
	\item for every $h$, $D f(x)=\sum_{i=1}^Q \a{D f^h_i}$ for a.e.\ $x\in C_h$.
\end{enumerate}
%Now let $E\defeq \p_{\pi_0}(\{x\in \supp( V)\cap \bC_4:\Theta( V,x)<Q\})$, notice that $\mathcal{L}^m(E)<\epsilon$. 

Now notice that for $\mathcal{H}^m$-a.e.\ $(x,y)\in\supp(\Gamma_f)$, if we write $f(x)=\sum_i \a{p_i}$, then
\begin{equation}\label{vfdsacxa}
	\Theta(\Gamma_f,(x,y)) =\sum_{i: p_i=y} Q_i\,.
\end{equation}
This follows from \cite[Proposition 1.4]{DSsns} together with what remarked just after \cite[Definition 1.10]{DSsns}.
In particular, recalling also \eqref{densdvacs},
\begin{equation}\label{fadc}
	\Theta(\Gamma_f,z) =\Theta(z) \qquad\text{for $\mathcal{H}^m$-a.e.\ $z\in (K_\lambda\times \RR^n)\cap\supp(V)$}\,,
\end{equation}
and we recall also that, by differentiation of measures,
\begin{equation}\label{fadc1}
	T_z\Gamma_f=T_z V \qquad\text{for $\mathcal{H}^m$-a.e.\ $z\in (K_\lambda\times \RR^n)\cap\supp(V)$}\,.
\end{equation}

To fix the notation, we take the standard basis of $\{0\}\times\RR^{n}\subseteq\RR^m\times\RR^n$ to be $(e_1,\dots,e_n)$.
Take now $j=1,\dots,n$ and  $\varphi\in C_c^1(B_1)$. With an harmless abuse, we test the variation of $ V$ with the vector field $X(x,y)=\varphi(x) e_j$, so $\delta V(X)=0$, being $ V$ stationary. Therefore,
\begin{equation}\label{cdsaasc2}
\begin{split}
	&\bigg|	\int \nabla_{T_z\Gamma_f}\varphi(z)\,\cdot\, e_j\,\dd\|\Gamma_f\|(z)\bigg|
\\
&\qquad\le \bigg|\int_{\Gamma_f} \nabla_{T_z\Gamma_f}\varphi(z)\,\cdot\, e_j\Theta(\Gamma_f,z)\,\dd\mathcal{H}^m(z)-\int_{\supp(V)} \nabla_{T_z V}\varphi(z)\,\cdot\, e_j\Theta(z)\,\dd\mathcal{H}^m(z)\bigg|
\\
&\qquad \le \|\nabla \varphi\|_{L^\infty} \big(\|V\|(\bC_1\setminus (K_\lambda\times\RR^n))+Q\mathcal{H}^m\mres\Gamma_f( \bC_1\setminus (K_\lambda\times\RR^n) )\big)\le C_0\|\nabla \varphi\|_{L^\infty} \lambda^{-1}\bE_4\,,
\end{split}
\end{equation}
where we used  \eqref{fadc} and  \eqref{fadc1} to deal with the portion on $K_\lambda\times \RR^n$, and the next-to-last inequality is by \eqref{measureestimate}.

Now, we concentrate on a set $C_h$, and fix $\hat f=f^h_i$ for some $i\in\{1,\dots,Q\}$, $\hat f_j= \hat f\,\cdot\,e_j$ will denote the $j$-th component of $\hat f$.   We will implicitly extend $\hat f$ to be defined on $B_1$, and still $C_0\lambda^{1/(2m)}$-Lipschitz.
By the area formula,
\begin{equation}
	\int_{C_h\times \RR^n}\nabla_{T_z\Gamma_{\hat f}} \varphi(z)\,\cdot\,e_j\,\dd\mathcal{H}^m\mres \Gamma_{\hat f}(z)=\int_{C_h}\nabla_{T_{\hat f(x)}\Gamma_{\hat f}} \varphi(x)\,\cdot\,e_j J\hat f(x)\,\dd \mathcal{L}^m(x)\,,
\end{equation}
and we can follow the proof of  \cite[Theorem 3.4]{DelellisAllard} to obtain the following relations:
\begin{equation}
	\big|	\nabla_{T_z \Gamma_{\hat f}} \varphi\,\cdot\, e_j-\nabla \varphi\,\cdot\,\nabla \hat f_j\big|\le C_0 |\nabla\varphi(x)||\nabla  \hat f(x)|^2\qquad\text{for $\mathcal{H}^m$-a.e.\ $z=(x,y)\in \Gamma_{\hat f}$}\,,
\end{equation}
\begin{equation}\label{areafactor}
	|J\hat f(x)-1|\le C_0|\nabla \hat f(x)|^2\qquad\text{for a.e.\ $x\in B_1$}\,,
\end{equation}
and 
\begin{equation}\label{vefcdsxa}
	2|\p_{\pi_0}-\p_{T_z\Gamma_{\hat f}}|^2\ge |\nabla \hat f(x)|^2\qquad\text{for $\mathcal{H}^m$-a.e.\ $z=(x,y)\in \Gamma_{\hat f}$}\,,
\end{equation}
and these hold provided that $\lambda$ is smaller a constant depending upon the geometric parameters (thanks to \eqref{Lipdcsa}), we will fix $\lambda$ accordingly.
We thus deduce that
\begin{equation}
	\bigg| \int_{C_h}\nabla \varphi\,\cdot\,\nabla \hat f_j - \int_{C_h\times \RR^n}\nabla_{T_z\Gamma_{\hat f}} \varphi\,\cdot\,e_j\,\dd\mathcal{H}^m(z)\bigg| \le C_0\|\nabla\varphi\|_{L^\infty}\int_{C_h}|\nabla \hat f|^2\,.
\end{equation}
Now we recall that $\hat f= f_i^h$, we sum the previous inequality for $i=1,\dots,Q$ and then on $h$,  to obtain that (the scalar product with $e_j$ is understood in the following sense: $\big(\sum_i \a{P_i}\big)\,\cdot\,e_j=\sum_i 	\a{P_i\,\cdot\,e_j}$)
\begin{equation}\label{cdsaasc}
\begin{split}
			&\bigg| \int_{B_1}\sum_{i=1}^Q\nabla \varphi\,\cdot\,D (f_i\,\cdot\,e_j)- \int\nabla_{T_z\Gamma_{f}} \varphi\,\cdot\,e_j\,\dd\|\Gamma_f\|(z)\bigg|
\end{split}
		\le C_0 \|\nabla\varphi\|_{L^\infty}\sum_h\sum_{i=1}^Q\int_{C_h}|\nabla  f^h_i|^2\,,
\end{equation}
where we took into account  \eqref{vfdsacxa}.
We compute further, by \eqref{vefcdsxa},
\begin{equation}\label{cdsaasc1}
\begin{split}
	\frac{1}{2^{q/2}}\sum_h\sum_{i=1}^Q\int_{C_h}|\nabla  f^h_i|^{q}
&\le  
\sum_h\sum_{i=1}^Q\int_{C_h\times \RR^n}|\p_{\pi_0}-\p_{T_z\Gamma_{ f_i^h}}|^{q}\,\dd\mathcal{H}^m\mres \Gamma_{ f_i^h}(z)
\\
&=
\int_{\bC_1\cap \Gamma_f}|\p_{\pi_0}-\p_{T_z\Gamma_{ f}}|^{q}\Theta(\Gamma_f,z)\,\dd\mathcal{H}^m(z)
		\\
	&= \int_{ \Gamma_f\cap (K_\lambda\times \RR^n)}|\p_{\pi_0}-\p_{T_z\Gamma_{ f}}|^{q}\Theta(\Gamma_f,z)\,\dd\mathcal{H}^m(z)
	\\
	&\qquad+  \int_{ \Gamma_f\cap ((B_1\setminus K_\lambda)\times \RR^n)}|\p_{\pi_0}-\p_{T_z\Gamma_{ f}}|^{q}\Theta(\Gamma_f,z)\,\dd\mathcal{H}^m(z)
	\\
        &\le \int_{\bC_1} |\p_{\pi_0}-\p_{T_z V}|^{q}\,\dd\|V\|+C_0^q\frac{1}{\lambda^{q/2}}\int_{\bC_4} |\p_{\pi_0}-\p_{T_z V}|^{q}\,\dd\|V\|
        \\
        &\le C_0^q\frac{1}{\lambda^{q/2}}\int_{\bC_4} |\p_{\pi_0}-\p_{T_z V}|^{q}\,\dd\|V\|
\end{split}
\end{equation}
where we used \eqref{vfdsacxa}, \eqref{fadc}, \eqref{fadc1} and \eqref{measureestimate} for the next-to-last inequality. This proves \eqref{lipen1}.

Finally, collecting the outcomes of \eqref{cdsaasc}, \eqref{cdsaasc1} (for $q=2$), and \eqref{cdsaasc2},
we arrive at 
\begin{equation}\label{toharm1}
		\bigg| \int_{B_1}\sum_{i=1}^Q\nabla \varphi\,\cdot\,D (f_i\,\cdot\,e_j)\bigg|\le C_0 \|\nabla\varphi\|_{L^\infty}\lambda^{-1}\bE_4 \qquad\text{for every $\varphi\in C_c^1(B_1)$}\,,
\end{equation}
which is \eqref{lipen2}.

Now we notice that  \eqref{area} follows from  the area formula, \eqref{areafactor} with \eqref{vefcdsxa}, and \eqref{fadc} with  \eqref{fadc1}, by the argument we have used above. Alternatively, it immediately follows by the interpretation of $\Gamma_f$ as a current.
\end{proof}

\subsection{Improved height bound}
With the Lipschitz approximation Theorem \ref{Lipapprox}, we can improve the height bound of Theorem \ref{height}.
\begin{proof}[Proof of \eqref{bvefdasdue} of  Theorem \ref{heightdue}] 
We begin the  proof as for Theorem \ref{height}, in particular, we assume $\bE_1>0$. 	Let $\bar\rho\defeq (1+r)/2$ and set $\bar s\defeq (1-\bar\rho)/8=(1-r)/8$. 
Cover $\bC_{\bar\rho}$ with $\bC^1,\dots, \bC^M$, where $M\le C(r)$,  such that for every $i$, $\bC^i=\bC_{\bar s}(x_i)$ with  $x_i\in B_{\bar\rho}$. Notice that for every $i$, $\bC_{4\bar s}(x_i)\subseteq\bC_1$, $\bE(V,\bC_{4\bar s}(x_i))\le C(r)\bE_1$.
Assume that $\bE_1$ is small enough, so that we can apply Lemma \ref{vfedacssc} with 
\begin{equation}
    0<r_1=\bar s< r_2=1-\bar s=(7+r)/8<1\quad\text{and}\quad\eta=1/4\,,
\end{equation}
for some $Q'\in\NN$, $Q'\in \{0,\dots,Q\}$.

We are going to  show that for every $i$,
\begin{equation}\label{asdcas}
\supp(V)\cap\bC^i\subseteq \big(\big\{(x,y)\in \RR^{n+m}: |y-\bar y^h|< C(r)|\log\bE_1|^{1-1/m}\bE_1^{1/m}\text{ for some $h=1,\dots,Q$} \big\}\big)\,,
\end{equation}
provided that $\bE_1$ is small enough, depending upon $r$ and the geometric parameters, where  $\bar y^1,\dots,\bar y^Q\in\RR^n$ may depend on $i$.
If we prove \eqref{asdcas}, the proof can be concluded exactly as for the proof of Theorem \ref{height}, see after \eqref{mark}.

Now we prove \eqref{asdcas}. Let $q\in (m,m+1)$ to be chosen later depending upon $\bE_1$. Fix for the moment $i=1,\dots,M$, we suppress the subscript from $x_i$.
By Lemma \ref{vfedacssc}, we apply Theorem \ref{Lipapprox} in scale-invariant form with a fixed constant $\lambda\in (0,1)$ small enough, depending upon the geometric parameters, so that  we obtain the   $Q$-valued Lipschitz function $f:B_{\bar s}(x)\rightarrow\Iqs$ with associated set $K_\lambda\subseteq B_{\bar s}(x)$. We recall that by \eqref{lipen1}, as $\lambda$ is fixed depending upon the geometric parameters,
 \begin{equation}
     \int_{B_{\bar s}(x)}|D f|^q\le C_0\int_{\bC_{4\bar s}(x)} |\p_{\pi_0}-\p_{T_z V}|^q\,\dd\|V\|\le C_0 \bE_1\,.
 \end{equation}
 Notice that
 \begin{equation}
     \sup_{x\in B_{\bar s}(x) }\cG(f(x),f(0))\le C(r) (q-m)^{-1+1/q} \bigg(\int_{B_{\bar s}(x)}|D f|^q\bigg)^{1/q}\le C(r)(q-m)^{-1+1/q} \bE_1^{1/q}\,.
 \end{equation}
Indeed, set $g\defeq\cG(f(x),f(0))\in \mathrm{LIP}(B_{\bar s}(x))$ with $|\nabla g|^2=\sum_{j=1}^m|\partial_j g|^2\le|D f|^2$ a.e., which implies that $|\nabla g|^q\le |D f|^q$. Hence, the inequality follows by the proof of the Morrey’s inequality in \cite[Theorem 4.10 (i)]{EG15}, keeping track of the dependence on $q$ of the constants.
Therefore, 
 \begin{equation}\label{asdcas1}
\supp(V)\cap (K_\lambda\times\RR^n)\subseteq \big(\big\{(x,y)\in \RR^{n+m}: |y-\bar y^h|< C(r)(q-m)^{-1+1/q}\bE_1^{1/q}\text{ for some $h=1,\dots,Q$} \big\}\big)\,,
\end{equation}
for $\bar y^1,\dots,\bar y^Q\in\RR^n$ depending on $i$.
Now we optimize the choice of $q$. We choose $q=m+\delta\in (m,m+1)$,  where $\delta \defeq -\frac{1}{\log\bE_1}$, provided that $\bE_1$ is smaller than a geometric constant, notice that $1/q\ge 1/m-\delta/m^2$.
We therefore bound
\begin{equation}
    \begin{split}
        (q-m)^{-1+1/q}\bE_1^{1/q}&\le \delta^{-1+1/m-\delta/m^2}\bE_1^{1/m-\delta/m^2}\le C_0|\log\bE_1|^{1-1/m}\bE_1^{1/m}\le C_0|\log \bE_1|^{1-1/m}\bE_1^{1/m}\,.
    \end{split}
\end{equation}
This proves \eqref{asdcas} with the left-hand-side replaced by $\supp(V)\cap (K_\lambda\times\RR^n)$.

Now we deal with $\supp(V)\cap ((B_{\bar s}(x)\setminus K_\lambda)\times \RR^n)$. Recall \eqref{measureestimate}, which states that
\begin{equation}
    |B_{\bar s}(x)\setminus K_\lambda|\le C(r) \bE_1\eqdef N\bE_1\,.
\end{equation}
Set $\sigma\defeq (2\omega_m^{-1} N\bE_1)^{1/m}$ and consider any $B_\sigma(z)\subseteq B_{\bar s}(x)$. If $\bE_1$ is small enough (depending upon $r$ and the geometric parameters), balls of this type cover $B_{\bar s}(x)$. Hence, for $y\in B_{\bar s}(x)\setminus K_\lambda$, take one such ball $B_\sigma(z)\ni y$. By the measure estimate, there exists $y'\in B_{ \sigma}(z)\cap  K_\lambda$.
Hence, by \eqref{brvfeads}, if we set $f(y')=\sum_i Q_i\a{p_i}$,
\begin{equation}\label{asdcas2}
    \supp(V)\cap \bC_{\sigma}(z)\subseteq B_\sigma(z)\times \bigcup_i B_{C_0 \sigma}(p_i)\subseteq B_\sigma(z)\times \bigcup_i B_{C_0 \sigma+C(r)|\log\bE_1|^{1-1/m}\bE_1^{1/m}}(\bar y^h)\,.
\end{equation}
Then \eqref{asdcas} follows, as 
$C_0\sigma\le C(r) \bE_1^{1/m}\le C(r)|\log\bE_1|^{1-1/m}\bE_1^{1/m}$.    
\end{proof}
\begin{proof}[Proof of \eqref{heighatdensity} of  Theorem \ref{heightdue}] Of course, we can assume $r>1/2$. Let $ \bar \rho\defeq (r+1)/2$ and $\bar s\defeq (1-\bar \rho)/8=(1-r)/16$, we set $r_1\defeq \bar s$, $r_2=1-\bar s$ and assume that $\bE_1$ is small enough to have the conclusion of Lemma \ref{vfedacssc} with $\eta=1/4$, for some $Q'\in \{0,\dots,Q\}$. Notice that $\|V\|(\bC_{1/2})\ge \|V\|(\bB_{1/2})\ge Q\omega_m (1/2)^m$ by the monotonicity formula, so that $Q'=Q$.

We cover now $\bC_{\bar \rho}$ with $\bC^1,\dots,\bC^M$, where $M\le C(r)$, such that for every $i$, $\bC^i=\bC_{\bar s}(x_i)$, with $x_i\in B_{\bar \rho}$. Notice that $\bE(\bC_{4\bar s}(x_i))\le \frac{1}{\bar s^m}\bE_1$.
By  Lemma \ref{vfedacssc}, we can apply Proposition \ref{lipen} in scale-invariant form to every $\bC_{4\bar s}(x_i)$, provided that  $\bE_1$ is small enough, depending upon the geometric parameters and on $\bar s$, i.e.\ upon the geometric parameters and on $r$. Here, $\lambda\in (0,1)$ is assumed to be small, depending upon the geometric parameters, and fixed.

Hence, for every $i$, we have sets $K_{\lambda,i}\subseteq B_{\bar s}(x_i)$, satisfying \eqref{area}.
Taking into account \eqref{measureestimate}, we obtain that 
\begin{equation}
    \Big|B_{\bar \rho}\setminus\bigcup_i K_{\lambda,i}\Big|\le\sum_i |B_{s_i}(x_i)\setminus K_{\lambda,i}| \le C(r) \bE_1\,, 
\end{equation}
so that 
\begin{equation}
    \|V\|(\bC_{\bar\rho})\le  Q\omega_m \bar\rho^m+C_0 \int_{\bC_{\bar\rho}}|\p_{T V}-\p_{\pi}|^2\,\dd\|V\|+  C(r) \bE_1 \le Q \omega_m {\bar\rho}^m+ C(r)\bE_1\,.
\end{equation} 
  Hence, by \eqref{monotformula},
  \begin{equation}
        \int_{\bB_{\bar\rho}}\frac{| x^\perp|^2}{|x|^{m+2}}\,\dd\|V\|= \frac{\|V\|(\bB_{\bar\rho})}{\omega_m \bar\rho^m}-\Theta(0)\le C( r)\bE_1\,.
        \end{equation}
    Now the conclusion can be obtained following e.g.\ \cite[Section 1]{SpolAlm}, see in particular \cite[Lemma 1.7 and Lemma 1.8]{SpolAlm}. In particular, by (the proof of) \cite[Lemma 1.8]{SpolAlm}, choosing coordinates $(x,y)\in\RR^{m+n}$,
    \begin{equation}\label{bfvsdcvss}
        \int_{\bB_{\bar\rho}} |y|^2\,\dd\|V\|\le C(r)\bE_1\,.
    \end{equation}
    Also, by Theorem \ref{height}, we have that 
    \begin{equation}
    \supp(V)\cap\bC_{ \bar\rho}\subseteq  \RR^m\times \bar B_{C( r)\bE_1^{1/(2m)}}(0)\,,
    \end{equation}
    provided that $\bE_1$ is small enough, depending upon $r$ and the geometric parameters.
    Hence, if $\bE_1$ is small enough, depending upon $r$ and the geometric parameters, we have by \eqref{bfvsdcvss} that 
      \begin{equation}
    \int_{\bC_{(r+\bar\rho)/2}}|y|^2\,\dd\|V\|\le C(r)\bE_1\,.
    \end{equation}
    Now, the conclusion follows from the subharmonicity of $|y|^i$, as in \cite[Lemma 1.7]{SpolAlm}, see e.g.\ \cite[7.5 (6)]{AllardFirst} or \cite[Proposition 2.3]{DPGaSc}.
\end{proof}

\subsection{Improved Lipschitz approximation}
The improved height bound of Theorem \ref{heightdue} yields an improved version of the Lipschitz approximation of Theorem \ref{Lipapprox}.
\begin{proof}[Proof of Theorem \ref{Lipapproxstrong}]
    Theorem \ref{Lipapproxstrong} is proved exactly as   Theorem \ref{Lipapprox}, but relying on Theorem \ref{heightdue} in place of Theorem \ref{height}. The only thing to notice is that, if $\lambda$ is smaller than a geometric constant, then, for every $\alpha\in (0, \lambda)$ we have $|\log\alpha|^{1-1/m}\alpha^{1/m}\le |\log\lambda|^{1-1/m}\lambda^{1/m}$.
 \end{proof}
\subsection{Excess decay}\label{excdec}
From Theorem \ref{predecay}, Theorem \ref{decay} follows almost immediately. We are going to prove Theorem \ref{predecay} following the blueprint in \cite{DelellisAllard}.
\begin{thm}\label{predecay}
	Let $V$ be stationary in $\bB_5$. %with $\Theta(0)=Q$.  
 Then, given any $\delta\in (0,1)$, there exists a threshold $\epsilon = \epsilon (\delta, Q,m,n) >0$ and an $\eta=\eta (\delta, Q, m, n)\in(0,1)$ such that, if 
 \begin{gather}
		\bE(V,\bB_5)<\epsilon\,,\\
  \mathcal{H}^m(\{x\in\supp(V)\cap \bB_5: \Theta(x)<Q\})<\epsilon\label{adfcas}\,,\\
	%\end{equation}
	%and
	%\begin{equation}
    Q-\eps\le\frac{\|V\|(\bB_{1})}{\omega_m} \le\frac{\|V\|(\bB_{5})}{\omega_m 5^m} < Q+\epsilon\,,
  \end{gather}
  then 
	\begin{equation}\label{vfecdas}
		\bE(V,\bB_{5\eta})\le \eta^{2-2\delta}\bE(V,\bB_5)\,.
	\end{equation}
\end{thm}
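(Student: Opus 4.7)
My plan follows the classical harmonic-approximation strategy for excess decay, in the multi-valued form developed in \cite{DelellisAllard}. After a rotation, assume that a plane nearly realising $\bE(V,\bB_5)$ is $\pi_0$. Monotonicity together with the mass hypotheses put us in position to apply Theorem \ref{Lipapproxstrong} inside $\bC_4$ (after a harmless rescaling fitting inside $\bB_5$), and the height bound \eqref{bvefdasdue} from Theorem \ref{heightdue} confines $\supp V$ to a thin strip around $\pi_0$; in particular the cylindrical excess $\bE_4$ is comparable to $\bE(V,\bB_5)$.

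I would then apply Theorem \ref{Lipapproxstrong} with threshold $\lambda = \bE_4^{\beta}$ for small $\beta = \beta(\delta) \in (0,1)$, obtaining a Lipschitz $Q$-valued map $f : B_1 \to \Iqs$ of Dirichlet energy $O(\bE_4)$ by \eqref{lipen1}. The almost-harmonicity estimate \eqref{lipen2} says that the trace $\bar f \defeq Q^{-1}\sum_i f_i$ is weakly harmonic up to an error of order $\lambda^{-1}\bE_4 = \bE_4^{1-\beta}$. A standard multi-valued Dir-harmonic approximation lemma then yields a Dir-minimizer $h : B_{3/4} \to \Iqs$ with
\begin{equation}
\int_{B_{3/4}} \cG(f,h)^2 = o(\bE_4) \qquad \text{as } \bE_4 \to 0.
\end{equation}
Here \eqref{adfcas} enters decisively: via \eqref{densdvacs} the density $\Theta$ matches the collapsed multiplicity of $f$, so $f \equiv Q\a{\bar f}$ holds outside a set of measure $o(1)$ in $B_1$. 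Passing this rigidity to the limit forces $h = Q\a{\bar h}$ for some classical harmonic $\bar h : B_{3/4} \to \RR^n$.

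Let $L(x) \defeq \bar h(0) + D\bar h(0)\cdot x$ and let $\pi$ be the $m$-plane graph of $L$. Smoothness of $\bar h$ combined with the energy bound yields $\int_{B_\eta}|D\bar h - D L|^2 \le C\eta^{m+2}\bE_4$. Converting the cylindrical excess of $V$ with respect to $\pi$ on $\bC_\eta$ into an $L^2$ norm of $D\bar f - D L$ via \eqref{area} (with errors on $B_1\setminus K_\lambda$ controlled by $\lambda^{-1}\bE_4$ through \eqref{measureestimatem}), and combining with the Dir-harmonic approximation, one arrives at $\bE(V,\bC_\eta,\pi) \le C\eta^2 \bE_4 + o(\bE_4) + C\bE_4^{1-\beta}$. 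Balancing $\beta$ and $\eta$ against $\delta$ absorbs every error below $\eta^{2-2\delta}\bE_4$, and the comparison $\bE(V,\bB_{5\eta}) \lesssim \bE(V,\bC_{C\eta},\pi)$ then delivers \eqref{vfecdas}. The main obstacle is the collapsing step: showing that the Dir-minimizing limit $h$ is rigidly $Q\a{\bar h}$ rather than genuinely multi-valued. This requires propagating the almost-everywhere multiplicity $\Theta = Q$ through the strong $L^2$-convergence of the Lipschitz approximations to $h$, and is precisely where hypothesis \eqref{adfcas} is indispensable; indeed, without it, excess decay can genuinely fail at $Q$-fold branched configurations.
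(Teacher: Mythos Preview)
Your outline has two concrete gaps that break the final estimate.

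\textbf{The choice $\lambda=\bE_4^\beta$ is fatal.} With this choice, \eqref{lipen1} gives only $\int_{B_1}|Df|^2\le C_0\lambda^{-1}\bE_4=C_0\bE_4^{1-\beta}$, not $O(\bE_4)$ as you claim; likewise \eqref{measureestimatem} gives $|B_1\setminus K_\lambda|+\|V\|(\bC_1\setminus(K_\lambda\times\RR^n))\le C_0\bE_4^{1-\beta}$. When you try to close the decay, the bad-set contribution to $\bE(V,\bC_\eta,\pi)$ is then at best $C\eta^{-m}\bE_4^{1-\beta}$, and the inequality $C\eta^{-m}\bE_4^{1-\beta}\le \eta^{2-2\delta}\bE_4$ is equivalent to $C\eta^{-m-2+2\delta}\le \bE_4^{\beta}$, which \emph{fails} as $\bE_4\to 0$. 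No balancing of $\beta,\eta,\delta$ can absorb a term that blows up relative to $\bE_4$. The paper instead fixes $\lambda$ as a geometric constant, so all error factors $\lambda^{-1}$ become $C_0$ and every remainder is genuinely $O(\bE_4)$.

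\textbf{The ``standard multi-valued Dir-harmonic approximation'' is not available here.} For stationary varifolds (unlike area-minimizing currents) one does \emph{not} know that the Lipschitz approximation $f$ is almost Dir-minimizing; \eqref{lipen2} only says that the center of mass $\etaa\circ f$ is almost harmonic. The paper therefore proves a tailored compactness lemma (Lemma~\ref{harmapprox}) which takes as input precisely the two pieces of information actually available---almost-harmonicity of $\sum_i f_i$ and the smallness of $\{f\ne Q\a{\etaa\circ f}\}$ coming from \eqref{adfcas} via \eqref{densdvacs}---and outputs directly a \emph{single-valued} harmonic $u$ with $\int\cG(f,Q\a{u})^2\le\rho\,\bE$. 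Your proposed route (first approximate by a $Q$-valued Dir-minimizer, then collapse it) assumes a lemma you cannot invoke.

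Finally, the paper does not pass through $\int|Df-DL|^2$ (there is no $W^{1,2}$-closeness of $f$ to the harmonic approximation, only $L^2$-closeness). It instead bounds the $L^2$-\emph{height} $\int_{\bB_{4\eta}(z_0)}\dist(z-z_0,\bar\pi)^2\,\dd\|V\|$---where the bad set contributes $C(\bE+\rho^2)\bE$ because $\supp V$ lies in a strip of width $\rho$ and the tilt $|\p_{\pi_0}-\p_{\bar\pi}|$ is $O(\bE^{1/2})$---and then converts height to excess via the Caccioppoli-type inequality of \cite[Proposition~4.1]{DelellisAllard}. This is what lets one choose $\eta$ first, then $\rho$, then $\epsilon$, so that all errors are absorbed.
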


To prove Theorem \ref{predecay}, we need a technical lemma which states that if a $Q$-valued function $f$ is almost harmonic, in a weak sense, and over a large set its support consists of a single sheet, then $f$ is close, in $L^2$, to a single-valued harmonic function counted with multiplicity $Q$.
\begin{lem}[Harmonic approximation] \label{harmapprox}Consider $B_1\subseteq\RR^m$. For any $\rho>0$, there exists a threshold $\epsilon_A=\epsilon_A (\rho, Q,m,n) >0$ with the following property. If a function $f\in W^{1,2}(B_1,\Iq(\RR))$ with $\D(f,B_1)\le 1$ satisfies 
	\begin{equation}\label{ass1}
		\bigg|\int_{B_1} \sum_{i=1}^Q\nabla \varphi\,\cdot\,{D f_i}\bigg|\le \epsilon_A \|\nabla\varphi\|_{L^\infty}\qquad\text{for every $\varphi\in C_c^1(B_1)$}
	\end{equation}
	and 
	\begin{equation}\label{ass2}
		\mathcal{L}^m(\{x\in B_1: f(x)\ne Q\a{\etaa\circ f}(x)\})\le\epsilon_A\,,
	\end{equation}
	there there exists an harmonic function $u\in W^{1,2}(B_1)$ with $\int_{B_1}|\nabla u|^2\le 1/Q$ such that 
	\begin{equation}\label{conc1}
		\int_{B_1} \cG(f,Q\a{u})^2\le \rho\,.
	\end{equation}
\end{lem}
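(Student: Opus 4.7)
The natural approach is compactness and contradiction. Suppose the conclusion fails: for some $\rho_0 > 0$ we find a sequence $f_k \in W^{1,2}(B_1, \mathcal{A}_Q(\R))$ with $\mathrm{Dir}(f_k, B_1) \le 1$ satisfying \eqref{ass1} and \eqref{ass2} with constant $\epsilon_A = 1/k$, but such that for every harmonic $u$ with $\int |\nabla u|^2 \le 1/Q$ one has $\int \mathcal{G}(f_k, Q\llbracket u\rrbracket)^2 > \rho_0$. The goal is to extract a limit harmonic function $u_\infty$ that contradicts this.

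The single-valued trace of $f_k$ which should serve as the approximant is $u_k \defeq \etaa \circ f_k$, normalized (by subtracting a constant, which leaves both hypotheses and the $\mathcal{G}$-distance unaffected up to shifting the competitor) to have zero mean on $B_1$. Since $\sum_i D f_{k,i} = Q\, \nabla u_k$ as a permutation-invariant single-valued map, a Cauchy--Schwarz inequality on the sheets gives the pointwise bound $Q |\nabla u_k|^2 \le |Df_k|^2$, whence
\begin{equation}\notag
\int_{B_1} |\nabla u_k|^2 \le \frac{1}{Q}\mathrm{Dir}(f_k, B_1) \le \frac{1}{Q}\, .
\end{equation}
By Poincaré and weak compactness in $W^{1,2}(B_1)$, up to subsequence $u_k \rightharpoonup u_\infty$ weakly in $W^{1,2}$ and strongly in $L^2$, with $\int |\nabla u_\infty|^2 \le 1/Q$ by lower semicontinuity. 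Translating \eqref{ass1} through the identity $\sum_i D f_{k,i} = Q\nabla u_k$ yields
\begin{equation}\notag
\Big|Q\int_{B_1} \nabla \varphi \cdot \nabla u_k\Big| \le \frac{1}{k} \|\nabla \varphi\|_{L^\infty}\qquad\text{for every }\varphi\in C_c^1(B_1)\, ;
\end{equation}
the right-hand side tends to zero for fixed $\varphi$ while the left-hand side converges to $Q\int \nabla \varphi \cdot \nabla u_\infty$ by weak convergence, so $u_\infty$ is weakly harmonic, hence harmonic.

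It remains to show $f_k \to Q\llbracket u_\infty\rrbracket$ in $L^2$. By \eqref{ass2}, writing $E_k \defeq \{f_k \ne Q\llbracket u_k\rrbracket\}$ we have $|E_k| \le 1/k$ and
\begin{equation}\notag
\int_{B_1} \mathcal{G}(f_k, Q\llbracket u_k\rrbracket)^2\,\dd x = \int_{E_k} \mathcal{G}(f_k, Q\llbracket u_k\rrbracket)^2\,\dd x\, .
\end{equation}
The Sobolev embedding for $Q$-valued maps (cf.\ \cite{DSq}) bounds $\mathcal{G}(f_k, Q\llbracket u_k\rrbracket)$ in $L^p(B_1)$ for some $p>2$ (with $p = \tfrac{2m}{m-2}$ if $m\ge 3$ and $p$ arbitrarily large if $m\le 2$), with bound depending only on $\mathrm{Dir}(f_k) \le 1$. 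Hölder then gives $\int_{E_k}\mathcal{G}(f_k, Q\llbracket u_k\rrbracket)^2 \le C |E_k|^{1-2/p} \to 0$. Combining with the strong $L^2$ convergence $u_k \to u_\infty$ and the triangle inequality,
\begin{equation}\notag
\int_{B_1}\mathcal{G}(f_k, Q\llbracket u_\infty\rrbracket)^2 \le 2\int \mathcal{G}(f_k, Q\llbracket u_k\rrbracket)^2 + 2Q\int |u_k - u_\infty|^2 \xrightarrow{k\to\infty} 0\, ,
\end{equation}
and for $k$ large the harmonic function $u_\infty$ contradicts the standing assumption.

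The only nontrivial step is the last one: controlling the $L^2$ norm of $\mathcal{G}(f_k, Q\llbracket u_k\rrbracket)$ over the tiny bad set $E_k$. The key is the extra integrability coming from Sobolev embedding, which upgrades boundedness of $\mathrm{Dir}(f_k)$ to an $L^p$ bound for some $p > 2$ and then lets the smallness of $|E_k|$ absorb the remaining gap. Everything else---weak compactness, passage to the limit in the almost-harmonicity condition, lower semicontinuity of the Dirichlet energy---is essentially automatic once the right single-valued surrogate $u_k = \etaa \circ f_k$ has been identified.
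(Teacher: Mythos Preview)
Your proof is correct and follows the same overall compactness--contradiction scheme as the paper, but two steps are handled differently and, in fact, more directly.

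First, for the harmonicity of the limit: you use the pointwise identity $\sum_i Df_{k,i}=Q\nabla(\etaa\circ f_k)$ to pass \eqref{ass1} immediately to the single-valued averages $u_k$, whereas the paper splits the pairing $\int\sum_i\nabla\varphi\cdot Df_{j,i}$ over the good set $B_1\setminus E_j$ (where $f_j=Q\a{\bar f_j}$) and the bad set $E_j$, estimating the latter by H\"older. Your route is shorter and avoids this splitting entirely.

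Second, for the $L^2$ closeness $f_k\to Q\a{u_\infty}$: the paper invokes the $Q$-valued compactness theorem \cite[Proposition~2.11]{DSq} to extract a limit of $f_j\ominus c_j$ in $L^2(B_1,\Iq)$ and then identifies it as $Q\a{\bar f}$ using the vanishing of $|E_j|$. You instead bound the scalar function $g_k=\cG(f_k,Q\a{u_k})$ in $L^p$ for some $p>2$ via Sobolev embedding (using that $g_k$ vanishes off $E_k$ to supply the zero-order term) and then kill $\int_{E_k}g_k^2$ by H\"older. This is more elementary in that it bypasses the $Q$-valued Rellich theorem; the paper's route, on the other hand, packages the argument through a single compactness statement. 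One small point worth making explicit in your write-up: the $L^p$ bound on $g_k$ depends not only on $\mathrm{Dir}(f_k)$ but also on a Poincar\'e-type inequality exploiting that $g_k$ vanishes on a set of measure at least $\omega_m-1/k$; this is implicit in your argument but should be stated.
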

	\begin{proof}
		We argue by contradiction, so we take $\rho>0$ and a sequence of functions $(f_j)_j\subseteq W^{1,2}(B_1,\Iq(\RR))$ with $\D(f_j,B_1)\le 1$, satisfying \eqref{ass1} and \eqref{ass2} with $2^{-j-1}$ in place of $\epsilon_A$, but such that \eqref{conc1} fails for every harmonic function $u\in W^{1,2}(B_1) $ with $\int_{B_1} |\nabla u|^2\le 1/Q$.
		We set $E_j\defeq \bigcup_{h\ge j}\{x\in B_1:f_h(x)\ne Q\a{\etaa\circ f_h}(x)\}$, notice that $\mathcal{L}^m(E_j)\le 1/2^j$,
		
		Set $\bar f_j\defeq \etaa\circ f_j\in W^{1,2}(B_1)$ with $\int_{B_1} |\nabla \bar f_j|^2\le 1/Q$. Indeed, if $f=\sum_i \a{f_i}$, then a.e.\ we have   
		\begin{equation}
			|D(\etaa\circ f)|^2=\frac{1}{Q^2} \Big|\sum_i D f_i\Big|^2\le \frac{1}{Q^2} Q \sum_i |D f_i|^2\,,
		\end{equation} 
		where we used Holder's inequality.
		By the Poincaré inequality, there exists a sequence $(c_j)_j\subseteq\RR$ such that $(\bar f_j-c_j)_j$ is bounded in $L^2(B_1)$.
		Now, using the notation $\big(\sum_i \a{P_i}\big)\ominus c\defeq\sum_i 	\a{P_i-c}$,  we compute
		\begin{equation}\label{cdsacsa}
\cG(f_j\ominus c_j,Q\a{0})\le \cG(f_j\ominus  c_j,Q\a{\bar f_j- c_j})+\cG(Q\a{\bar f_j- c_j},Q\a{0})=	\cG(f_j\ominus \bar f_j,Q\a{0})+\sqrt{Q}|\bar f_j-c_j|\,.
		\end{equation}
		Set $g_j\defeq \cG(f_j\ominus \bar f_j,Q\a{0})$.
		Recall that $g_j=0$ a.e.\ on $B_1\setminus E_1$, and moreover $\int_{B_1} |\nabla g_j|^2\le C_0 $ for every $j$. Hence, by the Poincaré inequality, we see that the first term at the right-hand-side of \eqref{cdsacsa} is bounded  in $L^2(B_1)$ uniformly in $j$. Also, the second term at the right-hand-side of \eqref{cdsacsa} is  bounded in $L^2(B_1)$ uniformly in $j$,  by the choice of $c_j$. Hence, we see that ($f_j\ominus c_j)_j$ is bounded in $L^2(B_1)$.  
		
		Therefore, by \cite[Proposition 2.11]{DSq} and the Sobolev embedding, up to passing to a non-relabelled subsequence, we have $f\in W^{1,2}(B_1,\Iq(\RR))$ and $\bar f\in W^{1,2}(B_1)$ with $\int_{B_1}|\nabla \bar f|^2\le 1/Q$ such   that  $f_j\ominus c_j\rightarrow f$ strongly in $L^2(B_1)$, and   $\bar f_j- c_j\rightarrow \bar f$ strongly in $L^2(B_1)$ and weakly in $W^{1,2}(B_1)$. Up to passing to a further, non-relabelled subsequence, we can assume that the convergence is also pointwise.   It then follows that $f=Q\a{\etaa\circ f}=Q\a{\bar f}$ a.e., as still $\mathcal{L}^m(E_j)\rightarrow 0$.
		
		Now we notice that, if  $\varphi\in C_c^1(B_1)$, we have for every $j$ that
		\begin{equation}\label{veafdcsca}
				\begin{split}
					Q\bigg|	\int_{B_1}\nabla\varphi\,\cdot\,\nabla \bar f_j\bigg|&\le Q\bigg|	\int_{B_1\cap E_j}\nabla\varphi\,\cdot\,\nabla \bar f_j\bigg|+\bigg|	\int_{B_1\setminus E_j}\sum_{i=1}^Q \nabla\varphi\,\cdot\,D(  (f_j)_i)\bigg|
				\\&	\le Q\bigg|	\int_{B_1\cap E_j}\nabla\varphi\,\cdot\,\nabla \bar f_j\bigg|+\bigg|	\int_{B_1}\sum_{i=1}^Q \nabla\varphi\,\cdot\,D(  (f_j)_i)\bigg| +\bigg|\int_{B_1\cap E_j}\sum_{i=1}^Q \nabla\varphi\,\cdot\,D(  (f_j)_i)\bigg|\,.	
				\end{split}
					\end{equation}
		To deal with the first term at the right-hand-side of \eqref{veafdcsca}, we notice that by Holder's inequality,
		\begin{align}
			\bigg|	\int_{B_1\cap E_j}\nabla\varphi\,\cdot\,\nabla \bar f_j\bigg|\le \|\nabla\varphi\|_{L^\infty}(\mathcal{L}^m(E_j))^{1/2}\bigg(\int_{B_1}|\nabla \bar f_j|^2\bigg)^{1/2}\le C_0 \|\nabla\varphi\|_{L^\infty}(\mathcal{L}^m(E_j))^{1/2}\,,
		\end{align}
		and the third term at the right-hand-side of \eqref{veafdcsca} is estimated similarly.
		Now, the remaining term converges to $0$ thanks to our  assumption in the contradiction argument. Hence, using also the weak convergence $\bar f_j\rightarrow f$ in $W^{1,2}(B_1)$, we see that 
		\begin{equation}
		\int_{B_1} \nabla\varphi\,\cdot\,\nabla \bar f=0\qquad\text{for every $\varphi\in C_c^1(B_1)$\,.}
	\end{equation}
	Therefore, $\bar  f$ is a classical harmonic function with $\int_{B_1}|\nabla \bar f|^2\le 1/Q$,
	and this is a contradiction as $f_j\ominus c_j\rightarrow  f=Q\a{\bar f}$ in $L^2(B_1)$.
	\end{proof}

	\begin{proof}[Proof of Theorem \ref{predecay}]
		In the course of the proof, several quantities will appear, we briefly describe the relations among these quantities. First, $\delta$ is as in the statement, and we recall that $m,n,Q$ are the geometric parameters. Then we will have $\epsilon,\eta, \lambda, \rho\in (0,1)$. The parameter $\lambda$ will be fixed depending only upon the geometric parameters. Then, at the end of the proof, we fix $\eta$ depending upon the geometric parameters and $\delta$. Then, we fix $\rho$ depending upon the geometric parameters, $\eta$ and $\delta$. Finally, we will fix $\epsilon$, depending upon the geometric parameters, $\rho$,  $\eta$, and $\delta$, hence,  ultimately depending upon the geometric parameters and $\delta$.
		
		Up to a rotation, we assume that $\bE(V,\bB_5)=\bE(V,\bB_5,\pi_0)$.
		First, we claim that if $\epsilon$ is smaller than a constant depending upon the geometric parameter, then $V\cap \{(x,y)\in \RR^{m+n}: |x|<4, |y|<1\}$ is stationary in $\bC_4$, $\frac{\|V\|(\bC_{4})}{\omega_m 4^m} < Q+1/2$, and $\supp(V\cap \{(x,y)\in \RR^{m+n}: |x|<4, |y|<1\})\subseteq \{(x,y)\in\RR^{m+n}:|y|<\rho\}$, where $\rho\in (0,1)$ will be specified at the end of the proof.
  We prove this by contradiction, so take a sequence of varifolds $(V_i)_i$ as in the assumptions of the statement for $\epsilon\searrow 0$, but not satisfying the claim above. By the Compactness Theorem of integral varifolds (e.g.\ \cite{Simon}),  we have that $V_i$ converges to the integral varifold $V_\infty$ (in the sense of varifolds), up to a non-relabelled subsequence. Hence,
  \begin{equation}
    Q\le \frac{\|V_\infty\|(\bB_{2})}{\omega_m 2^m}   \le \frac{\|V_\infty\|(\bB_{5})}{\omega_m 5^m} \le Q
  \end{equation}
   and $\bE(V_\infty,\bB_5,\pi_0)=0$. This implies that $V_\infty\cap \bB_5=Q\a{\pi_0}\cap \bB_5$, so that we obtain a contradiction as $\supp(V_i)\rightarrow \supp(V)$ in the Kuratowski sense in $\bB_5$ and $V_i\rightarrow V_\infty$ in the sense of varifolds in $\bB_5$. 
		
		From now on, we are going to work with the varifold $V\cap \{(x,y)\in \RR^{m+n}: |x|<4, |y|<1\}$ in place of $V$. As this coincides with  the original varifold in $\bB_1$, replacing the varifold is harmless to the aims of our theorem.  We recall the properties that $V$ satisfies: $\frac{\| V\|(\bC_4)}{\omega_m 4^m}<Q+1/2$,
  \begin{equation}\label{basso}
      \supp(V)\subseteq\{(x,y)\in \RR^{n+m}: |y|\le \rho\}\,,
  \end{equation}
\begin{equation}
	\bE_4=\bE( V,\bC_4,\pi_0)\le (5/4)^m\epsilon\quad\text{and}\quad\mathcal{H}^m(\{x\in\supp( V)\cap \bC_4: \Theta( V,x)<Q\})<\epsilon\,.
\end{equation} We are going to denote
$\bE(V,\bB_5,\pi_0)$ simply by $\bE$, so that $\bE_4\le (5/4)^m\bE$. Notice that if $\bE=0$, then there is nothing to show, so that we will assume that $\bE>0$.
\medskip\\\textbf{Step 1.} 
Fix $\lambda\in (0,1)$ small enough, depending upon the geometric parameters, so that we apply Theorem \ref{Lipapprox} and obtain a $Q$-valued Lipschitz function $f:B_1\rightarrow\Iqs$. Hence, we can, and will, absorb the factor $\lambda^{-1}$ in the constant that depends upon the geometric parameters.

We record the following two conclusions, given by  \eqref{lipen1} and \eqref{lipen2}, respectively:
\begin{equation}
	M\defeq \frac{\int_{B_1}|D f|^2}{\bE}\vee 1\le \frac{C_0\lambda^{-1}\bE_4}{\bE}\vee 1\le C_0\,,
\end{equation}
and
 \begin{equation}
    \bigg| \int_{B_1}\sum_{i=1}^Q\nabla \varphi\,\cdot\,D f_i\bigg|\le C_0 \|\nabla\varphi\|_{L^\infty}\frac{\bE_4}{\lambda^{-1}} \le C_0 \|\nabla\varphi\|_{L^\infty}\bE\qquad\text{for every $\varphi\in C_c^1(B_1)$}\,.
 \end{equation}
Also, by \eqref{densdvacs}, \eqref{measureestimate}, and \eqref{adfcas}, we deduce that
\begin{equation}\label{toharm3}
	\mathcal{L}^m (\{x\in B_1:f (x)\ne Q\a{\etaa\circ f}(x)\})\le C_0\lambda^{-1}\bE+ \epsilon\le C_0\epsilon\,.
\end{equation}
\medskip\\\textbf{Step 2.}
Thanks to the estimates of \textbf{Step 1}, we can apply Lemma \ref{harmapprox} to 
$\frac{f}{\sqrt{M\bE}}$, provided that $\epsilon$ is smaller than a quantity depending upon the geometric parameters and $\rho$ (which will be specified at the end of the proof), and obtain that there exists an harmonic function $u:B_1\rightarrow\R^n$ with 
\begin{equation}\label{cdscasd}
	\int_{B_1}\cG(f, Q\a{u})^2\le C_0 \rho \bE\quad\text{and}\quad
	\int_{B_1}|\nabla u|^2\le  M\bE\le C_0\bE\,.
\end{equation}
 We set $z_0\defeq (0,u(0))$ and $\bar\pi$ to be the affine plane parallel to the image of the differential of the map $x\mapsto (x,u(x))$ at $0$ and passing through $z_0$.
		\medskip\\\textbf{Step 3.}
We notice that, using \eqref{cdscasd}  
\begin{equation}
	\|u\|_{L^1(B_1)}\le C_0\bigg(\int_{B_1}\cG(Q\a{u},f)^2\bigg)^{1/2}+C_0 \bigg(\int_{B_1}\cG(f,Q\a{0})^2\bigg)^{1/2}
	\le C_0 (\rho\bE)^{1/2}+ C_0 \rho\,,
\end{equation}
where the last equality is due to \eqref{basso} with \eqref{vefadcsxac}. Hence, by the mean-value property for harmonic functions,
\begin{equation}\label{vacds}
		\dist(z_0,\pi_0)=|u(0)|\le C_0\| u\|_{L^1(B_1)}\le C_0 (\bE^{1/2}+\rho)\le \bE^{1/4}+\rho^{1/4}\,,
\end{equation}
provided that $\rho,\eps$ are smaller than a geometric constant, which we will from now on assume, and 
\begin{equation}
		|\p_{\pi_0^\perp}-\p_{\bar\pi^\perp}|\le C\sum_{j=1}^n|\nabla u_j(0)|\le C_0\| u\|_{L^1(B_1)}\le C_0 (\bE^{1/2}+\rho)\,.
\end{equation}
Also, for $\eta\in (0,1/2)$,
\begin{equation}\label{vefadcsx}
	\sup_{x\in B_\eta}|u(x)-u(0)-\nabla u(0)x|^2\le C_0\eta^4\int_{B_1}|\nabla u|^2\le C_0\eta^4 \bE\,,
\end{equation}
and this follows from standard facts concerning harmonic functions, e.g.\ \cite[Lemma 6.2]{DelellisAllard}, and \eqref{cdscasd}.

Now, for $z\in \supp(V)\cap \bB_{1/2}$, we estimate by the previous inequalities and \eqref{basso},
\begin{equation}
	\dist(z-z_0,\bar\pi)=|\p_{\bar \pi^\perp} (z-z_0)|\le C_0(\bE^{1/2}+\rho)\,,
\end{equation}
so that, by the previous inequality and \eqref{measureestimate},
\begin{equation}\label{comb1}
	\int_{\bB_{1/2}\setminus (K_\lambda\times \RR^n)} 	\dist(z-z_0,\bar\pi)^2 \,\dd\|V\|(z)\le C_0\bE (\bE+\rho^2)\,.
\end{equation}
For $\eta\in (0,1/10)$, using \eqref{fadc} of the proof of Proposition \ref{lipen} and the area formula as we did in the proof of Proposition \ref{lipen}  (i.e.\ partitioning $B_1$ in $(C_h)_h$),
\begin{equation}\label{comb2}
\begin{split}
	&	\int_{\bB_{5\eta}\cap (K_\lambda\times \RR^n)} \dist(z-z_0,\bar\pi)^2 \,\dd\|V\|(z)\le\int_{\bB_{5\eta}\cap (K_\lambda\times \RR^n)} |z-u(0)-\nabla u(0)\p_{\pi_0}(z)|^2\Theta(\Gamma_f,z) \,\dd\mathcal{H}^m(z)
		\\
		&\qquad \le C_0 \int_{\bB_{5\eta}\cap (K_\lambda\times \RR^n)\cap \Gamma_f} \cG(f(\p_{\pi_0}(z)),Q\a{u(0)+\nabla u(0)\p_{\pi_0}(z)})^2 \,\dd\mathcal{H}^m(z)
		\\
		&\qquad\le C_0 \int_{B_{5\eta}}\cG(f(x),Q\a{u(0)+\nabla u(0)x})^2\,\dd\mathcal{L}^m(x)
		\\
		&\qquad\le C_0 \int_{B_{5\eta}}\cG(f,Q\a{u})^2\,\dd\mathcal{L}^m+  C_0 \int_{B_{5\eta}}\cG(u(x),{u(0)+\nabla u(0)x})^2\,\dd\mathcal{L}^m(x)\le C_0\rho\bE+C_0\eta^{m+4}\bE\,,
\end{split}
\end{equation}
where for the last inequality we used \eqref{cdscasd} and \eqref{vefadcsx}.

All in all, combining \eqref{comb1} and \eqref{comb2}, noticing that if $\epsilon,\rho<\eta^4/16$, we have that $|z_0|=|u(0)|<\eta$ (see \eqref{vacds}), so that $\bB_{4\eta}(z_0)\subseteq\bB_{5\eta}$,
\begin{equation}\label{grbwvfecdasx}
	\frac{1}{\eta^{m+2}}\int_{\B_{4\eta}(z_0)}\dist(z-z_0,\bar\pi)^2\,\dd\|V\|\le( C_0\eta^{-m-2}\rho+C_0\eta^2+C_0\eta^{-m-2}(\bE+\rho^2))\bE\,.
\end{equation}
\medskip\\\textbf{Step 4.}
As, $\bB_\eta(0)\subseteq \bB_{2\eta}(z_0)$, using \cite[Proposition 4.1]{DelellisAllard}, we deduce from \eqref{grbwvfecdasx} that
\begin{equation}\label{casdcasd}
\begin{split}
		\bE(V,\bB_\eta,\bar \pi)&\le C_0\bE(V,\bB_{2\eta}(z_0),\bar\pi)\le ( C_0\eta^{-m-2}\rho+C_0\eta^2+C_0\eta^{-m-2}(\bE+\rho^2))\bE
		\\
		&=( C'\eta^{-m-4+2\delta}\rho+C'\eta^{2\delta}+C'\eta^{-m-4+2\delta}\bE)\eta^{2-2\delta}\bE\,,
\end{split}
\end{equation}
for some $C'$ geometric constant, independent of $\eps,\eta,\lambda,\rho$.

Now we choose $\eta\in (0,10)$ small enough (depending upon the geometric parameters and on $\delta$) so that $C'\eta^{2\delta}<1/100$, $\rho\in (0,\eta^4/16)$ small enough (depending upon the geometric parameters and on $\eta$ and $\delta$) so that $C'\eta^{-m-4+2\delta}\rho<1/100$, and finally $\epsilon\in(0,\eta^4/16) $ small enough (depending upon the geometric parameters, on $\rho$, on $\eta$, and on $\delta$) so that our use of Theorem \ref{Lipapprox} and Lemma \ref{harmapprox} (now $\rho$ is fixed) is justified, \eqref{basso} holds, and finally $C'\eta^{-m-4+2\delta}\bE<1/100$.
Hence,
\begin{equation}
		\bE(V,\bB_{5\eta/5},\bar \pi)\le 3/100 \eta^{2-2\delta}\bE\le (\eta/5)^{2-2\delta}\bE\,.
\end{equation}
This is exactly \eqref{vfecdas}, up to changing $\eta/5$ with $\eta$.
	\end{proof}

\begin{proof}[Proof of Theorem \ref{decay}]
	We let $\bE\defeq \bE(V,\bB_1)$.
	First, we iterate the scale-invariant form of Theorem \ref{predecay} to obtain that 
	\begin{equation}
		\bE(V, \bB_{\eta^k})\le (\eta^{2-2\delta})^k\bE=(\eta^k)^{2-2\delta}\bE\qquad\text{for every $k\in\N$}\,.
	\end{equation}
	Notice that we can continue iterating Theorem \ref{predecay} as $\bE(V,B_{\eta^{k}})\le \bE(V,B_{\eta^{k-1}})\le \dots\le  \bE(V,B_{1})<\epsilon$, as
	 \begin{equation}
		\frac{\|V\|(\bB_r)}{\omega_m r^m}<Q+\epsilon\qquad\text{for every $r\in (0,1)$}
	\end{equation}
	by \eqref{vrfecdsccas} and the monotonicity formula, and as \eqref{adfcas} is satisfied for every $r\in(0,1)$ by assumption. 
	Now take any $r\in (0,1)$ and let $k\in\N$ be the unique integer with $r\in (\eta^{k+1},\eta^{k}]$. Then, 
	\begin{equation}
		\bE(V,\bB_r)\le \bigg(\frac{\eta^k}{r}\bigg)^m	\bE(V,\bB_{\eta^k})\le \frac{1}{\eta^m} (\eta^k)^{2-2\delta}\bE\le \eta^{-m-2+2\delta}(\eta^{k+1})^{2+2\delta}\bE\le \eta^{-m-2+2\delta} r^{2-2\delta}\bE\,,
	\end{equation}
	which gives the claim, recalling that $\eta$ depends upon the geometric parameters and $\delta$.
\end{proof}

\end{document}